\newtheorem{thm}{Theorem}[section]
\newtheorem{yl}[thm]{Lemma}
\newtheorem{tl}[thm]{Corollary}
\newtheorem{mt}[thm]{Proposition}
\theoremstyle{remark}
\newtheorem{zj}{Remark}[section]
\theoremstyle{definition}
\newtheorem{dy}{Definition}[section]
\numberwithin{equation}{section}
\newcommand{\dd}{\operatorname{d}}
\newcommand{\Aut}{\operatorname{Aut}}
\newcommand{\IM}{\operatorname{Im}}
\newcommand{\im}{\sqrt{-1}}
\newcommand{\lr}[1]{(#1)}
\newcommand{\ra}{\rightarrow}
\newcommand{\laplace}{\bigtriangleup}
\newcommand{\grad}{\nabla}
\newenvironment{eqn}{\begin{equation}}{\end{equation}}
\newcommand\relphantom[1]{\mathrel{\phantom{#1}}}
\newcommand\repl{\relphantom}
\begin{document}
\title{Twisted and conical K\"{a}hler-Ricci soliton on Fano manifold}
\author{Xishen Jin}
\address{Xishen Jin\\Key Laboratory of Wu Wen-Tsun Mathematics\\ Chinese Academy of Sciences\\School of Mathematical Sciences\\
University of Science and Technology of China\\
Hefei, 230026, P.R. China\\} \email{jinxsh@mail.ustc.edu.cn}
\author{JiaWei Liu}
\address{Jiawei Liu\\Key Laboratory of Wu Wen-Tsun Mathematics\\ Chinese Academy of Sciences\\School of Mathematical Sciences\\
University of Science and Technology of China\\
Hefei, 230026, P.R. China\\} \email{liujw24@mail.ustc.edu.cn}
\author{Xi Zhang}
\address{Xi Zhang\\Key Laboratory of Wu Wen-Tsun Mathematics\\ Chinese Academy of Sciences\\School of Mathematical Sciences\\
University of Science and Technology of China\\
Hefei, 230026,P.R. China\\ } \email{mathzx@ustc.edu.cn}
\thanks{AMS Mathematics Subject Classification. 53C55,\ 32W20.}
\thanks{The authors were supported in part by NSF in
China No.11131007 and the Hundred Talents Program of CAS}
\begin{abstract}
In this paper, we consider the twisted K\"ahler-Ricci soliton, and show that the existence of twisted K\"ahler-Ricci soliton with semi-positive twisting form is closely related to the properness of some energy functionals.  We also consider the conical K\"ahler-Ricci soliton, and obtain some existence results. In particular, under some assumptions on the divisor and $\alpha$-invariant, we get the properness of the modified log K-energy and the existence of conical K\"ahler-Ricci soliton with suitable cone angle.
\end{abstract}
\maketitle
\section{Introduction}
\label{section:1}
Let $\lr{M,J}$ be a compact Fano manifold.  A K\"ahler metric $\omega \in 2\pi c_{1}(M)$ is called a K\"{a}hler--Ricci soliton if there exits a holomorphic vector field $X$ over $M$ such that $$Ric\lr{\omega}=\omega+L_X\omega .$$
  K\"{a}hler--Ricci solitons  can be considered as a natural extension of the K\"{a}hler--Einstein metrics, which have been studied by Cao \cite{cao2012existence}, Cao-Tian-Zhu \cite{cao2005kahler}, Hamilton \cite{hamilton1993eternal}, Tian \cite{tian1997kahler}, Tian-Zhu \cite{tian2002new}, Zhu \cite{zhu2000kahler}, etc. Specially, in \cite{cao2005kahler}, the authors shown that the existence of K\"{a}hler--Ricci solitons is
 closely related to the properness of the modified Ding--functional or Mabuchi $K$--energy. Following the works of Aubin \cite{aubin1976equations} and Yau \cite{yau1978ricci}, we study the continuity method used in \cite{cao2005kahler}. Given a K\"{a}hler metric $\omega_0\in 2\pi c_1\lr{M}$ and holomorphic vector field $X$, the approach is to find $\omega_{t}$ solving the following equation,
\begin{eqn}
\label{eqn:1}
Ric\lr{\omega_{t}}=t\omega_{t}+\lr{1-t}\omega_{0}+L_X\omega_{t}
\end{eqn}
for all $t\in [0,1]$. In this paper, we are interested in the supremum of $t$ for which we can solve the equation above.

Let $\Aut\lr{M}$ be the connected component containing the identity holomorphism transformation and $\eta\lr{M}$ be its Lie algebra consisting of all holomorphic vector fields on $M$. According to \cite{futaki1995bilinear}, there exists a semidirect decomposition of $\Aut\lr{M}$, such that $$\Aut\lr{M}=\dot{\Aut}\lr{M}\propto R_u,$$ where $R_u$ is the unipotent radical of $\Aut\lr{M}$ and $\dot{\Aut}\lr{M}\subset \Aut\lr{M}$ is the reductive subgroup as a complexification of a subgroup $K$ of $\Aut\lr{M}$, where $K$ is the maximal compact subgroup of $\Aut\lr{M}$ containing the one--parameter transformations subgroup of $\Aut\lr{M}$ generating by $\operatorname{Im}X$. Obviously, the Lie subalgebra $\dot{\eta}\lr{M}$ of $\dot{\Aut}\lr{M}$ is reductive. More precisely, $\dot\eta\lr{M}$ is the complexification of a real compact Lie algebra of $K$. In particular, $X\in \dot{\eta}\lr{M}$. Now we assume $\omega_0 \in 2\pi c_1\lr{M}$ is a smooth K\"{a}hler metric invariant under the action of $\Phi_{\operatorname{Im}X}$, where $\Phi_{\operatorname{Im}X}$ is the one--parameter transformations subgroup of $\Aut\lr{M}$ generating by $\operatorname{Im}X$, i.e. $$L_{\operatorname{Im}X}\omega_0=0.$$

Let $\mathscr{H}\lr{M,\omega_0}=\{\varphi\in L_{loc}^1(M)|\text{ }\omega_0+\im\partial\overline{\partial}\varphi>0 \text{ in the sense of current}\}$. As in \cite{cao2005kahler}, we define the following function subspace of $\mathscr{H}\lr{M,\omega_0}$:$$\mathscr{H}_X\lr{M,\omega_0}=\{\varphi\in \mathscr{H}\lr{M,\omega_0}\cap C^\infty\lr{M}|\text{ }\IM\lr{X}\varphi=0\}.$$ We define $\mathscr{K}_X^{0}(\omega_0)$ to be the space of smooth semipositive $(1,1)$--forms cohomology to $\omega_0$, i.e.
\begin{equation*}
  \mathscr{K}_X^{0}(\omega_0)=\{\omega\in[\omega_0]|\text{ }\omega \text{ is smooth and }\omega\geq 0 \text{, }L_{\operatorname{Im}X}\omega=0\}.
\end{equation*}
Furthermore, $\mathscr{K}_X\lr{\omega_0}$ is a subspace of K\"ahler metrics defined as follow:
\begin{equation*}
  \mathscr{K}_X(\omega_0)=\{\omega\in [\omega_0]|\text{ }\omega \text{ is a K\"ahler metric and }L_{\operatorname{Im}X}(\omega)=0\}.
\end{equation*}

\begin{dy}
  We define the following invariant with respect to $X$,
  \begin{eqn}
    R\lr{X}=\sup\{\beta|\ \exists\text{ }\omega\in \mathscr{K}_X\lr{\omega_0},\text{ such that } Ric\lr{\omega}-L_X\omega\geq \beta\omega\}.
  \end{eqn}
\end{dy}

\begin{zj}
 According to \cite{zhu2000kahler}, there exists $\omega'_{0}\in \mathscr{K}_X(\omega_0)$, such that
\begin{equation*}
  Ric(\omega'_0)-L_X\omega'_0=\omega_0\geq c'\omega'_0 >0,
\end{equation*}
so we get $R(X)>0$. Furthermore, by taking integration over $M$ on both sides of $Ric\lr{\omega}-L_X\omega\geq \beta\omega$, we conclude that $\beta\leq 1$, therefore $0< R(X) \leq 1$. If $X\equiv 0$, $R(0)$ is just the invariant defined in \cite{GS}.
\end{zj}

Note that $\omega$ is a closed form and $X$ is holomorphic, we have that $\overline{\partial}(i_X\omega)=0$. According to the Hodge decomposition theorem and the property of Fano manifold, we can find a smooth real-valued function $\theta_X(\omega)$ such that for $\omega \in \mathscr{K}_X(\omega_0)$,
\begin{equation}
\label{eqn:2}
  i_X\omega=\im \overline{\partial} \theta_X(\omega)
\end{equation}
and $\theta_X(\omega)$ satisfies the normalization
\begin{equation*}
  \int_Me^{\theta_X(\omega)}\omega^n=\int_M \omega_0^n.
\end{equation*}
We will take notation that $\theta_X=\theta_X(\omega_0)$ in the whole paper without special instruction. By direct computation, we get that $\theta_X(\omega_\varphi)=\theta_X+X(\varphi)$.

\begin{dy}
  For any $\lr{1,1}$--form $\eta\in\lr{1-\beta}\mathscr{K}^0_X\lr{\omega_0},$ we say a K\"{a}hler metric $\omega\in \mathscr{K}_X\lr{\omega_0}$ is a twisted K\"{a}hler--Ricci soliton with respect to $\eta$ if it satisfies
  \begin{equation}
  \label{eqn:GKS}
    Ric\lr{\omega}=\beta\omega+\eta+L_X\omega.
  \end{equation}
\end{dy}

\begin{zj}
It is easy to see that finding the twisted K\"{a}hler--Ricci soliton as \eqref{eqn:GKS} is equivalent to solving the following Monge-Amp\`{e}re equation:
\begin{equation}
\label{eqn:MAKRS}
  \frac{\omega_\varphi^n}{\omega_0^n}=e^{h_{\omega_0}-\beta\varphi-\theta_X-X\lr{\varphi}},
\end{equation}
where $h_{\omega}$ is the Ricci potential defined by
\begin{equation}
\label{Ricci pote}
  Ric(\omega)-\beta\omega-\eta=\im\partial\overline{\partial}h_\omega
\end{equation}
 normalized such that $\int_Me^{h_\omega}\omega^n =\int_M\omega^n$. And we just consider the case when $\beta$ is nonnegative, since the equation \eqref{eqn:MAKRS} is solvable according to the celebrated work of Aubin \cite{aubin1976equations} and Yau \cite{yau1978ricci} on the other case.
\end{zj}

In this paper, we will follow Tian's argument in \cite{tian1997kahler}  to show that the existence of twisted K\"{a}hler--Ricci soliton with respect to $\eta$ is closely related to the properness of the twisted $K$--energy $\widetilde{\mu}_{\omega_0,\eta}$ (see the definition in section \ref{section:1}). Following the discussion of Tian--Zhu \cite{tian2000nonlinear} and Phong--Song--Strum--Weinkove \cite{phong2008moser}, we deduce a linear Moser--Trudinger type inequality. In fact, we get the first main theorem.
\begin{thm}
\label{theorem:1}
  Let $\lr{M,\omega_0}$ be a compact K\"{a}hler manifold, $L_{\IM X}\omega_0=0$ and $\eta$ is a real closed semipositive $\lr{1,1}$--form in $\lr{1-\beta}\mathscr{K}^0_X\lr{\omega_0}$ with $\beta>0$ , where $X$ is a holomorphic vector field on $M$. Suppose the twisted $K$--energy $\widetilde{\mu}_{\omega_0,\eta}$ is proper. Then there is a twisted K\"{a}hler--Ricci soliton $\omega\in [\omega_0]$ with respect to $\eta$, i.e.
   \begin{eqn}\label{1.7}
     Ric\lr{\omega}=\beta\omega+\eta+L_X\omega.
   \end{eqn}
   On the other hand, assuming that the twisted form $\eta$ is strictly positive at a point, if there exists a twisted K\"{a}hler--Ricci soliton $\omega_{TKS}\in \mathscr{K}_X \lr{\omega_0}$ with respect to $\eta$, then $\widetilde{\mu}_{\omega_0, \eta}$ must be proper. Furthermore, there exist two positive constants $C_1$ and $C_2$ depending only on $\beta$, $\eta$, $X$ and the geometry of $\lr{M,\omega_{TKS}}$, such that
  \begin{eqn}\label{1.8}
    \widetilde{\mu}_{\omega_{0},\eta}\lr{\varphi}\geq C_1 \widetilde{J}_{\omega_{0}}\lr{\varphi}-C_2
  \end{eqn}
  for all $\varphi\in \mathscr{H}_X\lr{M,\omega_{0}}$.
\end{thm}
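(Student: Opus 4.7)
The proof naturally splits along the two implications. The plan is to run Tian's continuity method \cite{tian1997kahler} for the existence direction and then to use a soliton-based comparison along the lines of Phong-Song-Sturm-Weinkove \cite{phong2008moser} and Tian-Zhu \cite{tian2000nonlinear} for the coercivity direction.

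For properness $\Rightarrow$ existence, I consider the continuity path
\begin{equation*}
\frac{\omega_{\varphi_s}^n}{\omega_0^n}=e^{h_{\omega_0}-s\beta\varphi_s-\theta_X-X(\varphi_s)},\qquad s\in[0,1],
\end{equation*}
where $h_{\omega_0}$ is the Ricci potential of $\beta\omega_0+\eta$ as in \eqref{Ricci pote}. At $s=0$ the equation is a twisted Calabi-Yau type problem with holomorphic vector field, solvable by Zhu \cite{zhu2000kahler}. Openness at each $s\in[0,1)$ follows from the implicit function theorem applied to the Tian-Zhu modified Laplacian $\Delta_{\omega_{\varphi_s}}+X-s\beta$, whose kernel is trivial on the $\operatorname{Im}X$-invariant subspace. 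Closedness reduces to an a priori $L^\infty$ bound on $\varphi_s$. For this I would first establish the monotonicity $\frac{d}{ds}\widetilde{\mu}_{\omega_0,\eta}(\varphi_s)\le 0$ by differentiating the defining formula of $\widetilde{\mu}_{\omega_0,\eta}$ and using the equation; properness then bounds $\widetilde{J}_{\omega_0}(\varphi_s)$ uniformly, which via a Moser iteration type argument upgrades to $\|\varphi_s\|_{C^0}\le C$, while Yau's $C^2$ and Calabi's $C^3$ estimates supply the higher order control needed to pass to a limit soliton at $s=1$.

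For existence $\Rightarrow$ coercivity, assume $\omega_{TKS}\in\mathscr{K}_X(\omega_0)$ satisfies \eqref{eqn:GKS}. Writing $\omega_{TKS}=\omega_0+\im\partial\overline{\partial}\psi_{TKS}$ and using the cocycle identity for $\widetilde{\mu}_{\cdot,\eta}$, it suffices to bound $\widetilde{\mu}_{\omega_{TKS},\eta}(\varphi)$ below by $C_1\widetilde{J}_{\omega_{TKS}}(\varphi)-C_2$ on $\mathscr{H}_X(M,\omega_{TKS})$. Since the Ricci potential $h_{\omega_{TKS}}$ is constant by the soliton equation, a direct computation decomposes $\widetilde{\mu}_{\omega_{TKS},\eta}(\varphi)$ into (i) the relative entropy $\int_M\log(\omega_\varphi^n/\omega_{TKS}^n)\,e^{\theta_X(\omega_\varphi)}\omega_\varphi^n$, (ii) an $\eta$-coupled term of the form $\int_0^1\!\!\int_M\dot{\varphi}_t\,\eta\wedge\omega_{\varphi_t}^{n-1}\,dt$ along a smooth path from $0$ to $\varphi$, and (iii) an $\widetilde{I}-\widetilde{J}$ remainder. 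The Tian-Zhu Moser-Trudinger inequality \cite{tian2000nonlinear} for solitons bounds (i) below by $-\varepsilon\widetilde{J}_{\omega_{TKS}}(\varphi)-C_\varepsilon$ for any $\varepsilon>0$, while the strict positivity of $\eta$ at a point, combined with the PSSW localization, forces (ii) to contribute a definite positive multiple of $\widetilde{J}_{\omega_{TKS}}(\varphi)$; absorbing $\varepsilon$ produces \eqref{1.8} and therefore properness.

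The main obstacle will be the precise bookkeeping of the vector field contribution $X(\varphi)$ in the differentiation and decomposition formulas for $\widetilde{\mu}_{\omega_0,\eta}$, $\widetilde{I}_{\omega_0}$ and $\widetilde{J}_{\omega_0}$, since these identities both drive the monotonicity along the continuity path and underpin the entropy/$\eta$-split used in the coercivity direction. A secondary delicacy is verifying that the PSSW absorption survives with $\eta>0$ only at a single point rather than globally; one would localize by a cutoff and a Poincar\'e-type inequality for $\widetilde{J}_{\omega_{TKS}}$ on a small ball where $\eta$ is strictly positive, which is where the hypothesis on $\eta$ in the converse direction enters in an essential way.
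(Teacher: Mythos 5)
Your first half (properness $\Rightarrow$ existence) is essentially the paper's argument: the same continuity path, openness via invertibility of $\laplace_{\omega_{\varphi}}+X+t\beta$ on $\operatorname{Im}X$--invariant functions, and closedness via an upper bound for the twisted $K$--energy along the path. Two caveats. The monotonicity $\frac{d}{ds}\widetilde{\mu}_{\omega_0,\eta}(\varphi_s)\leq 0$ is not a one-line differentiation: it requires the identity $\hat{F}_{\omega_0}(\varphi_t)=-\frac{1}{t}\int_0^t(\widetilde{I}_{\omega_0}-\widetilde{J}_{\omega_0})\,\dd s$ of Proposition~\ref{prop:2.2} together with the monotonicity of $\widetilde{I}_{\omega_0}-\widetilde{J}_{\omega_0}$ (Lemma~\ref{lemma:1}); the paper in fact only needs the resulting upper bound $\widetilde{\mu}_{\omega_0,\eta}(\varphi_t)\leq C$. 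And the passage from a bound on $\widetilde{J}_{\omega_0}$ to $\|\varphi_t\|_{C^0}$ in the weighted setting is not plain Moser iteration: one needs Zhu's uniform bound on $|X(\varphi)|$ (Lemma~\ref{lemma:3}), the maximum-principle bound on $\laplace_{\omega_\varphi}\theta_X(\omega_\varphi)$, and the Cao--Tian--Zhu Green-function estimate for $\laplace_{\omega_\varphi}+X$ (Lemma~\ref{lemma:6}); this is precisely where the semipositivity of $\eta$ enters, through $Ric(\omega_{\varphi_t})-L_X\omega_{\varphi_t}\geq\beta t_0\,\omega_{\varphi_t}$ along the path.

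The converse direction, however, has a genuine gap. The appeal to the Tian--Zhu Moser--Trudinger inequality \cite{tian2000nonlinear} is circular: that inequality is established at a K\"ahler--Einstein (or K\"ahler--Ricci soliton) metric, and no such metric is available here --- the only distinguished metric is the twisted soliton $\omega_{TKS}$, and the Moser--Trudinger inequality at $\omega_{TKS}$ is exactly what must be proven. (In fact your term (i) is bounded below by a constant by Jensen's inequality, since $\int_M e^{\theta_X(\omega)}\omega^n=V$ for every $\omega\in\mathscr{K}_X(\omega_0)$; the difficulty does not sit there.) More seriously, the positive multiple of $\widetilde{J}_{\omega_{TKS}}$ cannot come from the $\eta$--coupled term when $\eta$ is merely semipositive and strictly positive at a single point: for potentials whose Monge--Amp\`ere mass concentrates away from the locus where $\eta>0$, the term (ii) stays bounded while $\widetilde{J}_{\omega_{TKS}}\to\infty$, and no cutoff/Poincar\'e localization can repair this, because the term to be absorbed, $-\beta(\widetilde{I}-\widetilde{J})$, is comparable to the full $\widetilde{J}$ (Proposition~\ref{prop:1}), whereas (ii) is of size $O(1-\beta)$ even when $\eta$ is a global K\"ahler form. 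If such a soft decomposition worked, it would yield properness without ever using the existence of $\omega_{TKS}$ in an essential quantitative way, which is incompatible with the equivalences of Corollary~\ref{theorem:2}. The paper's actual mechanism is different: run the continuity path backwards from an arbitrary $\phi$, use the smoothing lemma (unit-time twisted K\"ahler--Ricci flow) to control $\|h_{\omega_t'}-\theta_X(\omega_t')\|_{C^{0,\frac12}}$, and perform a perturbation/case analysis near $t=1$ in the style of \cite{phong2008moser}, \cite{cao2005kahler}; the hypothesis that $\eta$ is strictly positive at one point is used only to make the linearized operator $-\laplace_{\omega_{TKS}}-X-\beta$ invertible (no kernel), not to make any functional coercive, and the properness of $\widetilde{\mu}_{\omega_0,\eta}$ then follows from Lemma~\ref{lemma:2}, the cocycle property and \eqref{eqn:5}. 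A small additional slip: for a twisted soliton with $X\neq 0$, $h_{\omega_{TKS}}$ is not constant; only $h_{\omega_{TKS}}-\theta_X(\omega_{TKS})$ is.
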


\begin{zj}
$\repl{=}$

\begin{enumerate}
  \item A functional $G$ is $J$--proper on the function space $\mathscr{H}$ if there exists an increasing function \(  f:\mathbb{R}\rightarrow\left[c,+\infty\right)\) satisfying $\mathop{\lim}\limits_{t\rightarrow +\infty}f\lr{t}=+\infty$, such that
      \begin{equation*}
         G\lr{\phi}\geq f\lr{J\lr{\phi}}
      \end{equation*}
      for any $\phi\in \mathscr{H}$.
  \item When $\eta$ is strictly positive at one point, following the argument in \cite{berndtsson2013brunn}, we can get the uniqueness of twisted K\"ahler-Ricci soliton.
\end{enumerate}
\end{zj}

\medskip

If $\beta \in (0, R(X))$, by the definition of $R(X)$, there exists a K\"ahler metric $\tilde{\omega } \in \mathscr{K}_X\lr{\omega_0}$ such that
$$Ric\lr{\tilde{\omega}}-\beta\tilde{\omega}- L_X\tilde{\omega}>0.$$
Let $\eta = Ric\lr{\tilde{\omega}}-\beta\tilde{\omega}- L_X\tilde{\omega} \in (1-\beta)\mathscr{K}_X\lr{\omega_0} $. The equation (\ref{1.7}) can be solved in $\mathscr{K}_X\lr{\omega_0}$. By Theorem 1.1, we obtain that the twisted $K$--energy $\widetilde{\mu}_{\omega_0,\eta}$ is proper, in fact, it satisfies the Moser-Trudinger type inequality (\ref{1.8}). On other hand, by the definition of $\widetilde{\mu}_{\omega_0,\eta}$,  it is easy to see that the properness of the twisted $K$--energy $\widetilde{\mu}_{\omega_0,\eta}$ is independent on the choice of the twisting form $\eta \in (1-\beta)\mathscr{K}_X\lr{\omega_0}$, which implies that $\widetilde{\mu}_{ \omega_0 , (1-\beta ) \omega_{0}}$ is also proper. Then the equation (\ref{eqn:1}) can be solved at $t=\beta$. Furthermore, since $\omega_0$ is strictly positive, we have the following corollary,
%
%
%
\begin{tl}
\label{corallary:1}
  Let $\lr{M,\omega_0}$ be a K\"{a}hler manifold with $\omega_0\in 2\pi c_1\lr{M}$, and $0<\beta<1$. The following conditions are equivalent:
  \begin{enumerate}
    \item we can solve the equation \eqref{eqn:1},
    \item there exists a K\"{a}hler metric $\omega\in \mathscr{K}_X\lr{\omega_0}$ such that $Ric\lr{\omega}-L_X\lr{\omega}>\beta\omega$,
    \item for any K\"{a}hler metric $\omega\in \mathscr{K}_X\lr{\omega_0}$, $\widetilde{\mu}_{\omega}+\lr{1-\beta}\lr{\widetilde{I}_{\omega} - \widetilde{J}_{\omega}}$ is proper.
  \end{enumerate}
\end{tl}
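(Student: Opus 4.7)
The plan is to establish the cyclic chain $(1) \Rightarrow (2) \Rightarrow (3) \Rightarrow (1)$, using Theorem \ref{theorem:1} as a black box together with two invariance facts recalled in the paragraph preceding the corollary: properness of $\widetilde{\mu}_{\omega_0,\eta}$ depends only on the cohomology class of the twisting form $\eta$, not on its representative in $(1-\beta)\mathscr{K}^0_X(\omega_0)$, and a standard cocycle identity for $\widetilde{\mu}_\omega$ under change of base metric. The easiest implication is $(1) \Rightarrow (2)$: a solution $\omega_\beta$ of \eqref{eqn:1} at $t=\beta$ satisfies $Ric(\omega_\beta) - L_X\omega_\beta = \beta\omega_\beta + (1-\beta)\omega_0 > \beta\omega_\beta$, since $(1-\beta)\omega_0$ is a positive multiple of a K\"ahler metric.

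For $(2) \Rightarrow (3)$ I would proceed as in the discussion preceding the corollary: define $\eta := Ric(\omega) - \beta\omega - L_X\omega > 0$, which lies in $(1-\beta)\mathscr{K}^0_X(\omega_0)$ and is strictly positive at every point, so that $\omega$ is itself a twisted K\"ahler--Ricci soliton with respect to $\eta$. The converse direction of Theorem \ref{theorem:1} then gives properness of $\widetilde{\mu}_{\omega_0,\eta}$, in the strong form \eqref{1.8}. Replacing $\eta$ by $(1-\beta)\omega_0 \in (1-\beta)\mathscr{K}^0_X(\omega_0)$ via the invariance of properness under change of twisting form, we conclude that $\widetilde{\mu}_{\omega_0,(1-\beta)\omega_0} = \widetilde{\mu}_{\omega_0} + (1-\beta)(\widetilde{I}_{\omega_0} - \widetilde{J}_{\omega_0})$ is proper; the cocycle identity for $\widetilde{\mu}_\omega$ under change of base metric then extends this to any $\omega \in \mathscr{K}_X(\omega_0)$, yielding (3).

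Finally, for $(3) \Rightarrow (1)$ I would specialize (3) to $\omega = \omega_0$, so that the proper functional is exactly $\widetilde{\mu}_{\omega_0,(1-\beta)\omega_0}$, and then apply the direct direction of Theorem \ref{theorem:1} with twisting form $\eta = (1-\beta)\omega_0$ to produce a K\"ahler metric $\omega \in [\omega_0]$ satisfying $Ric(\omega) = \beta\omega + (1-\beta)\omega_0 + L_X\omega$, which is precisely \eqref{eqn:1} at $t = \beta$. The only non-formal input is Theorem \ref{theorem:1} itself; the main technical step inside the proof of the corollary is checking the two invariance statements for $\widetilde{\mu}_{\omega_0,\eta}$. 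This reduces to showing that the difference of two twisted $K$-energies (either varying $\eta$ in its cohomology class, or varying the base metric) is a bounded linear combination of the functionals $\widetilde{I}$ and $\widetilde{J}$, so that any lower bound of the form $f(\widetilde{J}(\varphi))$ for one representative transfers, possibly after modifying $f$, to any other representative. This is where I would expect the only real bookkeeping; everything else is a direct appeal to Theorem \ref{theorem:1}.
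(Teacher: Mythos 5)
Your proposal is correct and takes essentially the same route as the paper: it combines both directions of Theorem \ref{theorem:1} with the invariance of properness under a change of twisting form (Lemma \ref{lemma:f}) and under a change of base metric (the cocycle property together with \eqref{eqn:5}), which is exactly the mechanism the paper uses via Corollary \ref{theorem:2} and the discussion preceding Corollary \ref{corallary:1}; your cyclic organization $(1)\Rightarrow(2)\Rightarrow(3)\Rightarrow(1)$ is only a cosmetic repackaging of the same argument.
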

%

Let $D=\{s=0\}\in|L|$ be a smooth divisor, and particularly, in this paper we consider $L$ is a holomorphic line bundle such that $c_1(L)=\lambda c(M)$, for some $\lambda\in \mathbb{Q}^+$. A smooth conical K\"{a}hler metric on $M$ with angle $2\pi\beta(0<\beta<1)$ along $D$ is a closed positive $\lr{1,1}$--current which is a smooth K\"{a}hler metric in $M\setminus D$ and asymptotically equivalent to the model conical metric
 \begin{equation*}
   \im\sum_{j=1}^{n-1}\dd z^j\wedge\dd \overline{z}^j+\im|z^n|^{2\beta-2}\dd z^n\wedge\dd \overline{z}^n,
 \end{equation*}
where $\lr{z^1,\cdots,z^n}$ are local holomorphic coordinates such that $D=\{z^n=0\}$. As in \cite{datar2013connecting}, we give the definition of conical K\"{a}hler--Ricci soliton with respect to the holomorphic vector field $X$, which has been studied on the toric manifold by \cite{datar2013connecting} and \cite{wang2014toric}.
\begin{dy}
  A conical K\"{a}hler metric $\omega\in 2\pi c_1\lr{M}$ is called a conical K\"{a}hler--Ricci soliton with respect to $X$ if $\omega$ satisfies:
  \begin{enumerate}
    \item The metric potential of $\omega$ is H\"{o}lder continuous with respect to $\omega_0$ on $M$,
    \item $Ric\lr{\omega}=\gamma(\lambda,\nu)\omega+\nu[D]+L_X\omega$ globally on $M$ in the sense of current, where $[D]$ is the current of integration along $D$,
    \item $Ric\lr{\omega}=\gamma(\lambda,\nu)\omega+L_X\omega$ on $M\backslash D$ in the classical sense,
  \end{enumerate}
  where $\gamma(\lambda,\nu)=1-\lambda\nu$.
\end{dy}

\begin{zj}
  While dealing with $L_X\omega$ as a current, we mean that for any smooth $\lr{n-1,n-1}$--form $\zeta$,
  \begin{equation*}
    \int_M L_X\omega\wedge\zeta=-\int_M \omega\wedge L_X\zeta.
  \end{equation*}
\end{zj}

\medskip

We will show that the existence of conical K\"ahler-Ricci soliton is also closely related to the properness of the log modified Mabuchi $K$--energy  $\widetilde{\mu}_{\omega_0,\nu D}$ and the log Ding functional $\widetilde{F}_{\omega_0, \nu D}$ which will be defined in section \ref{section:5}.
\begin{thm}
\label{theorem:existence}
   Assume that $X(\log|s|_H^2)$ is bounded, where $s$ is the defined section of $D$ and $H$ is a Hermitian metric on the line bundle $L$. If $\widetilde{\mu}_{\omega_0,\nu D}$ or $\widetilde{F}_{\omega_0, \nu D}$ is proper on the function space $\mathscr{H}_X\lr{M,\omega_0}$, where $0<\nu <1$, then there exists a conical K\"{a}hler--Ricci soliton $\omega_\nu$ with angle $2 \pi ( 1 - \nu ) $ along $D$, i.e. $\omega_\nu$ satisfies:
  \begin{equation}
 \label{eqn:D2}
    Ric\lr{\omega_\nu}=\gamma\lr{\lambda,\nu}\omega_\nu + \nu [D] +L_X\omega_\nu.
  \end{equation}
\end{thm}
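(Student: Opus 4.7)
The strategy is a smooth approximation of the divisor term $\nu[D]$ that reduces the conical problem to a family of smooth twisted ones handled by Theorem \ref{theorem:1}. First, fix an $\IM(X)$-invariant Hermitian metric $H$ on $L$ and construct a family of smooth, semi-positive, $\IM(X)$-invariant $(1,1)$-forms $\eta_\epsilon \in \lam\nu\,\mathscr{K}_X^0(\omega_0)$ of the form
$$\eta_\epsilon = \nu a\,\omega_0 + \nu \im\pa\pb \log\bigl(|s|_H^2+\epsilon^2\bigr),$$
with $a$ a constant fixing the cohomology class, so that $\eta_\epsilon \to \nu[D]$ as currents as $\epsilon\to 0$ by Poincaré-Lelong. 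For each $\epsilon>0$, Theorem \ref{theorem:1} yields a smooth twisted Kähler-Ricci soliton $\omega_\epsilon = \omega_0 + \im\pa\pb\varphi_\epsilon$ satisfying
$$\Ric(\omega_\epsilon) = \gam(\lam,\nu)\,\omega_\epsilon + \eta_\epsilon + L_X\omega_\epsilon,$$
equivalently the Monge-Ampère equation
$$\frac{\omega_\epsilon^n}{\omega_0^n} = \frac{C_\epsilon\, e^{h_{\omega_0}-\gam(\lam,\nu)\varphi_\epsilon-\theta_X-X(\varphi_\epsilon)}}{(|s|_H^2+\epsilon^2)^\nu}.$$
To invoke Theorem \ref{theorem:1} one verifies that $\widetilde{\mu}_{\omega_0,\eta_\epsilon}$ is proper on $\mathscr{H}_X(M,\omega_0)$ uniformly in $\epsilon$; this is done by writing out the difference $\widetilde{\mu}_{\omega_0,\eta_\epsilon}-\widetilde{\mu}_{\omega_0,\nu D}$ as a correction of the form $\nu\int_M \varphi\,(\eta_\epsilon-[D])\wedge\omega_0^{n-1}$ plus lower-order terms and bounding it in absolute value on any $\widetilde{J}_{\omega_0}$-sublevel set uniformly in $\epsilon$, so that properness transfers from the conical functional to the smooth ones.

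Next, I would pass to the limit $\epsilon\to 0$. The uniform Moser-Trudinger inequality \eqref{1.8} yields a uniform $L^p$ bound on the right-hand side of the Monge-Ampère equation; Kołodziej's pluripotential $L^\infty$ estimate then gives a uniform $C^0$ bound on $\varphi_\epsilon$. Evans-Krylov and Schauder arguments on compact subsets of $M\setminus D$ produce $C^{k,\alpha}_{\mathrm{loc}}(M\setminus D)$ control, so that along a subsequence $\omega_\epsilon$ converges smoothly off $D$ to $\omega_\nu=\omega_0+\im\pa\pb\varphi_\nu$ with bounded $\varphi_\nu$; Hölder continuity of $\varphi_\nu$ on $M$ then follows from Kołodziej-Dinew-type regularity once the density is controlled in $L^p$. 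Passing to the limit in the Monge-Ampère equation and using Poincaré-Lelong identifies $\omega_\nu$ as a conical Kähler-Ricci soliton of cone angle $2\pi(1-\nu)$ along $D$ in the current sense, while item (3) of the definition follows from the smooth convergence away from $D$.

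The main technical obstacle will be to control the drift $X(\varphi_\epsilon)$ uniformly near $D$ as $\epsilon\to 0$: standard pluripotential techniques do not see the vector field, and $X(\varphi_\epsilon)$ could a priori blow up along $D$. The hypothesis that $X(\log|s|_H^2)$ is bounded is exactly what closes this gap. Combined with the identity $\theta_X(\omega_\epsilon)=\theta_X+X(\varphi_\epsilon)$ and integration by parts against $\log(|s|_H^2+\epsilon^2)$, it converts the drift term into a quantity controllable in $L^p$ uniformly up to $D$, so that Kołodziej's theorem can be applied. Finally, the case where only $\widetilde{F}_{\omega_0,\nu D}$ is proper reduces to the $\widetilde{\mu}_{\omega_0,\nu D}$ case via the inequality $\widetilde{\mu}\geq \widetilde{F}$ (up to normalisation) on $\mathscr{H}_X(M,\omega_0)$, after which the argument above applies verbatim.
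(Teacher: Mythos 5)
Your overall architecture agrees with the paper's: regularize $\nu[D]$ by the smooth semipositive forms $\eta_\varepsilon=\nu\lr{\lambda\omega_0+\im\pa\pb\log\lr{|s|_H^2+\varepsilon^2}}$, transfer the properness of $\widetilde{\mu}_{\omega_0,\nu D}$ (or of $\widetilde{F}_{\omega_0,\nu D}$, via the inequality \eqref{eqn:D1}) uniformly to $\widetilde{\mu}_{\omega_0,\nu\eta_\varepsilon}$, solve the smooth twisted soliton equations, and pass to the limit. However, your analytic core has a genuine gap at the $C^0$ step. The claim that the Moser--Trudinger inequality \eqref{1.8} gives a uniform $L^p$ bound on the right-hand side of the Monge--Amp\`ere equation, and hence a uniform $C^0$ bound via Ko{\l}odziej, is essentially circular: the right-hand side contains $e^{-\gamma\lr{\lambda,\nu}\varphi_\varepsilon}$, so an $L^p$ bound on it presupposes uniform exponential integrability of $-\varphi_\varepsilon$, which is what has to be proved; and since the equation is not invariant under adding constants to $\varphi_\varepsilon$, you cannot normalize $\sup\varphi_\varepsilon=0$ and appeal to an $\alpha$-invariant (which would anyway require $\gamma\lr{\lambda,\nu}$ small). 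The paper instead follows Proposition \ref{mt:2.6}: properness plus the upper bound of $\widetilde{\mu}_{\omega_0,\nu\eta_\varepsilon}$ at the solutions bounds $\widetilde{I}_{\omega_0}$, and this is converted into an oscillation bound by the Green function estimate for the drift operator $\laplace+X$ (Lemma \ref{lemma:6}); it is precisely in verifying its hypothesis $\laplace_{\omega_{\phi_{\varepsilon,t}}}\theta_X\lr{\omega_{\phi_{\varepsilon,t}}}\leq\widetilde{k}$ uniformly in $\varepsilon$ that the assumption $|X(\log|s|_H^2)|<C$ enters (together with the Simpson--Stokes lemma giving $\operatorname{Im}X(|s|_H^2)=0$). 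By contrast, the quantity you identify as the main obstacle, $X\lr{\varphi_\varepsilon}$, is uniformly bounded by Lemma \ref{lemma:3} independently of $D$, so your account of the role of the hypothesis is misplaced; Ko{\l}odziej is used only afterwards, for the uniform global H\"older estimate.

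The second gap is the uniform second-order estimate, which you skip entirely. Invoking Evans--Krylov and Schauder on compact subsets of $M\setminus D$ presupposes a uniform-in-$\varepsilon$ Laplacian bound; interior $C^2$ estimates for complex Monge--Amp\`ere equations do not follow from a $C^0$ bound and smooth local data alone, and in the conical setting this is the hardest step. The paper carries it out relative to the Guenancia--P\u{a}un reference metrics $\omega_\varepsilon=\omega_0+k\im\pa\pb\chi\lr{\varepsilon^2+|s|_H^2}$, whose curvature is unbounded near $D$, absorbing the bad curvature terms with the auxiliary function $\Psi_{\varepsilon,\rho}$, and handling the drift term $X^j\varphi_{\varepsilon j\bar{i}i}$ at the maximum point via the first-order condition and the uniform bounds on $X\lr{\varphi_\varepsilon}$ and $X\lr{\Psi_{\varepsilon,\rho}}$; only then do local higher-order estimates, the smooth convergence off $D$, and the verification of \eqref{eqn:D2} as currents go through. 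Your remaining points (semipositivity and $\operatorname{Im}X$-invariance of $\eta_\varepsilon$, the uniform properness transfer, the reduction of the $\widetilde{F}_{\omega_0,\nu D}$ case to the $\widetilde{\mu}_{\omega_0,\nu D}$ case) are correct and agree with the paper, but as written the proposal does not yet contain a proof of the two key uniform estimates.
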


\medskip

%
%

\medskip

Under some assumptions on the divisor and $\alpha$-invariant, modifying Berman's work \cite{berman2013thermodynamical}, we obtain the existence of conical K\"ahler-Ricci soliton for suitable cone angles, i.e. Theorem \ref{berman}. when $R\lr{X}=1$, we prove that the supremum of the cone angle  of conical K\"ahler-Ricci soliton must be $2\pi $, i.e. we get the following theorem.

\medskip


\begin{thm}
\label{theorem:0.4}
  Assume that $R(X)=1$, $D\in|L|$, $|X(\log|s|_H^2)|<C<+\infty$ and
  \begin{enumerate}
    \item $\tilde{C}<\lambda$,
    \item $\min\{\alpha(\omega_0),\lambda\alpha(L_{|D})\}> \max\{\frac{\tilde{C}(1-\lambda)}{(1-\tilde{C})},0\}$,
  \end{enumerate}
  where $\tilde{C}$ is the positive constant $C_2 <1$ in Proposition \ref{prop:1}, and $\alpha(\omega_0)$ and $\alpha(L_{|D})$ are the alpha invariants defined by Tian. For any $\beta\in\lr{\max\{\frac{1-\lambda}{1-\tilde{C}},0\},1}$, there exists a conical metric $\omega_\beta$ with the cone angle $2\pi(1- \frac{1-\beta}{\lambda}) $ such that
  \begin{equation*}
    Ric\lr{\omega_\beta}=\beta\omega_\beta+\frac{1-\beta}{\lambda}[D] + L_X\omega_\beta.
  \end{equation*}
  Furthermore, $\omega_\beta$ is a Gromov--Hausdorff limit of smooth twisted K\"{a}hler--Ricci solitons.
\end{thm}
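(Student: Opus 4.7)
The plan is to apply Theorem \ref{theorem:existence} by verifying that, for $\nu = \frac{1-\beta}{\lambda}$, the log Ding functional $\widetilde{F}_{\omega_0,\nu D}$ is proper on $\mathscr{H}_X(M,\omega_0)$. Since $\gamma(\lambda,\nu) = 1 - \lambda\nu = \beta$, any conical K\"ahler--Ricci soliton produced in this way automatically satisfies the equation stated for $\omega_\beta$. I would start by writing $\widetilde{F}_{\omega_0,\nu D}$ as the sum of the smooth twisted Ding functional $\widetilde{F}_{\omega_0,(1-\beta)\omega_0}$ and a log correction term that records the difference between the twisting form $(1-\beta)\omega_0$ and the current $(1-\beta)[D] = \lambda\nu[D]$; the correction essentially amounts to $-\tfrac{1-\beta}{\lambda}\log\!\int_M |s|_H^{-2\nu} e^{h_{\omega_0}-\beta\varphi-\theta_X-X(\varphi)}\omega_0^n$ minus the analogous smooth expression.

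For the smooth part, since $R(X)=1$ Corollary \ref{corallary:1} tells us that the continuity equation \eqref{eqn:1} is solvable for every $t<1$, and Theorem \ref{theorem:1} together with Proposition \ref{prop:1} then yields a linear Moser--Trudinger type inequality
\begin{equation*}
\widetilde{F}_{\omega_0,(1-\beta)\omega_0}(\varphi) \geq \tilde{C}\,\widetilde{J}_{\omega_0}(\varphi) - C
\end{equation*}
for all $\varphi \in \mathscr{H}_X(M,\omega_0)$, with $\tilde{C}<1$ the constant recorded in the proposition. To estimate the log correction, following the Berman--Tian strategy I would combine Tian's $\alpha$-invariant estimate $\int_M e^{-\alpha(\varphi - \sup\varphi)}\omega_0^n \le C_\alpha$ (valid for $\alpha < \alpha(\omega_0)$) with the bound $|X(\log|s|_H^2)|<C$, which keeps the new $X$-contribution harmless. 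The second invariant $\alpha(L|_D)$ enters through a restriction argument on $D$, controlling the term one picks up when differentiating the log integral in $\nu$ (equivalently, via an Ohsawa--Takegoshi extension decoupling the divisor contribution from the bulk). The combined estimate gives a bound of the form
\begin{equation*}
(\text{log correction})(\varphi) \geq -(1-\beta)\bigl(\widetilde{I}_{\omega_0}-\widetilde{J}_{\omega_0}\bigr)(\varphi) - C',
\end{equation*}
with the $\widetilde{J}_{\omega_0}$-coefficient strictly smaller than $\tilde{C}$ exactly when $\beta > \frac{1-\lambda}{1-\tilde{C}}$ (and unconditionally when $\lambda\geq 1$); the two $\alpha$-invariant hypotheses are precisely what is needed for this absorption to be effective. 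Adding the two contributions yields properness of $\widetilde{F}_{\omega_0,\nu D}$ and hence, via Theorem \ref{theorem:existence}, the existence of $\omega_\beta$.

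For the Gromov--Hausdorff convergence claim, I would approximate $\nu[D]$ by a family of smooth semipositive representatives $\eta_\epsilon \in (1-\beta)\mathscr{K}^0_X(\omega_0)$, obtained by regularizing $\log|s|_H^2$; solve the smooth twisted soliton equation $\Ric(\omega_\epsilon) = \beta\omega_\epsilon + \eta_\epsilon + L_X\omega_\epsilon$ via Theorem \ref{theorem:1} (properness for the perturbed twistings is inherited from the uniform Moser--Trudinger inequality derived above); derive uniform Perelman-type bounds on the Ricci potentials, uniform diameter bounds, and $L^p$ bounds on potentials; and then invoke the Cheeger--Colding--Tian compactness theory for K\"ahler--Ricci solitons to extract a Gromov--Hausdorff limit, identified with $\omega_\beta$ by uniqueness.

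The main obstacle will be the interplay between the two $\alpha$-invariants in the presence of the holomorphic vector field $X$: keeping the coefficient in the log correction sharp enough to recover the threshold $\tfrac{1-\lambda}{1-\tilde{C}}$ on $\beta$ forces one to combine the global estimate on $M$ (governed by $\alpha(\omega_0)$) with the restriction estimate on $D$ (governed by $\lambda\alpha(L|_D)$), while absorbing the new cross-terms $X(\log|s|_H^2)$ that did not appear in Berman's original K\"ahler--Einstein setting and that require the boundedness hypothesis $|X(\log|s|_H^2)|<C$ to stay controlled.
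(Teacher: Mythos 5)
Your overall architecture (properness of a log functional plus Theorem \ref{theorem:existence} for existence, smooth twisted approximation for the Gromov--Hausdorff statement) matches the paper, but the central analytic step is not justified and, as stated, appears to be false. You claim that the ``log correction'' $-\frac{1}{\beta}\log\lr{\frac{1}{V}\int_M|s|_H^{-2\nu}e^{h_0-\beta\varphi}\omega_0^n}+\frac{1}{\beta}\log\lr{\frac{1}{V}\int_Me^{h_0-\beta\varphi}\omega_0^n}$ is bounded below by $-(1-\beta)\lr{\widetilde{I}_{\omega_0}-\widetilde{J}_{\omega_0}}\lr{\varphi}-C'$ via the $\alpha$-invariants. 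But an $\alpha$-invariant bound on $\int_M|s|_H^{-2\nu}e^{-\beta(\varphi-\sup\varphi)}\omega_0^n$ requires $\beta$ to be \emph{below} $\alpha\lr{[\omega_0],\nu D}$, and the hypotheses of Theorem \ref{theorem:0.4} only guarantee $\min\{\alpha(\omega_0),\lambda\alpha(L_{|D})\}>\frac{\tilde{C}(1-\lambda)}{1-\tilde{C}}$, which may be far smaller than $\tilde{C}$, let alone than $\beta$ close to $1$; moreover, along potentials modeled on $\frac{A}{\beta}\log|s|_H^2$ with $\nu+A>1>A$ (admissible since $A$ may go up to $\beta/\lambda>1-\nu$ when $\lambda<1$) the correction tends to $-\infty$ while $\widetilde{I}_{\omega_0}-\widetilde{J}_{\omega_0}$ stays bounded, so no inequality of the claimed form can hold uniformly. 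Your arithmetic also does not produce the threshold $\frac{1-\lambda}{1-\tilde{C}}$ from a coefficient $(1-\beta)$, and the Moser--Trudinger constant for the smooth twisted functional coming from Theorem \ref{theorem:1} depends on the geometry of the twisted soliton and cannot be identified with the constant $\tilde{C}=C_2$ of Proposition \ref{prop:1}, which in the paper enters only through the elementary inequality $\widetilde{I}_{\omega_0}-\widetilde{J}_{\omega_0}\leq\tilde{C}\,\widetilde{I}_{\omega_0}$.

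The missing idea is the paper's two-step structure. First (Proposition \ref{berman}), the pair $\alpha$-invariant bound $\alpha\lr{[\omega_0],\nu D}\geq\min\{\lambda-1+\beta,\alpha(\omega_0),\lambda\alpha(L_{|D})\}$ together with $\widetilde{I}_{\omega_0}-\widetilde{J}_{\omega_0}\leq\tilde{C}\,\widetilde{I}_{\omega_0}$ gives properness of $\widetilde{\mu}_{\omega_0,\frac{1-\beta_0}{\lambda}D}$ only for an anchor angle $\beta_0$ in the range $\lr{\max\{\frac{1-\lambda}{1-\tilde{C}},0\},\min\{\frac{\alpha(\omega_0)}{\tilde{C}},\frac{\lambda\alpha(L_{|D})}{\tilde{C}},1\}}$ (this is where the threshold and hypotheses (1),(2) actually come from). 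Second, for any larger $\beta$ one proves an interpolation inequality of the form $\widetilde{\mu}_{\omega_0,\frac{1-\beta}{\lambda}D}\geq\kappa\,\widetilde{\mu}_{\omega_0,(1-\xi)\omega_0}+C_4\lr{\widetilde{I}_{\omega_0}-\widetilde{J}_{\omega_0}}-C_3$ with suitable $\kappa,\xi$, using the lower bound on the divisor term extracted from the properness at $\beta_0$; here $R(X)=1$ enters precisely through Corollary \ref{theorem:2}, which makes the smooth twisted $K$--energy $\widetilde{\mu}_{\omega_0,(1-\xi)\omega_0}$ proper for every $\xi<1$. Your proposal never passes through a proper log functional at a smaller angle, so it cannot reach $\beta$ near $1$ under the stated weak hypotheses. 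For the Gromov--Hausdorff claim your sketch is broadly in the right direction, but the tools are not Perelman-type estimates: the paper bounds $|X|_{\omega_{\phi_\varepsilon}}$ uniformly by a Bochner/maximum principle argument (using $|X(\log|s|_H^2)|<C$), gets diameter and volume control from Bakry--\'Emery comparison (Mabuchi, Wei--Wylie), applies Wang--Zhu's extension of Cheeger--Colding theory, and identifies the limit with the metric completion of $(M\setminus D,\omega_\beta)$ via a Hausdorff-measure argument.
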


\begin{zj}
  Our argument in Theorem \ref{theorem:0.4} can also be applied to the conical K\"ahler-Einstein case, i.e. $X\equiv 0$. When $L$ is just $-\lambda K_M$ for $\lambda\geq 1$, according to Bermann \cite{berman2013thermodynamical} and Li-Sun \cite{li2012conical}, the log Mabuchi K-energy for small cone angle is proper. If $R(0)$ the greatest lower bound of Ricci tensor defined in \cite{song2012greatest} is $1$, then we can get that the log Mabuchi K-energy is proper for any cone angle in $(0,2\pi)$. This result has been proved by Chi Li in \cite{li2013yau}, but our argument is different to that in \cite{li2013yau}.
\end{zj}

We will organize the paper as follow. In section \ref{section:2}, we will introduce the twisted K\"ahler--Ricci soliton and related functionals, i.e. twisted Mabuchi $K$--energy and Ding--functional. In section \ref{section:3}, we will discuss the existence of twisted K\"ahler--Ricci solitons. Then, we give a necessary condition for the existence of twisted K\"ahler--Ricci solitons, i.e. a version of Moser--Trudinger inequality, which will be proved in section \ref{section:4}. In section \ref{section:5}, we will prove the existence of conical K\"ahler--Ricci solitons under properness assumption of the log modified Mabuchi $K$--energy or log modified Ding--functional. In the last section, we find some condition under which we can get the properness, and we consider the limit behavior of a sequence of twisted K\"ahler--Ricci solitons in the sense of Gromov--Hausdorff distance.

\section{Some twisted functionals}
\label{section:2}
In this section, $\eta$ will be a fix $\lr{1,1}$--form in $\lr{1-\beta}\mathscr{K}^0_X\lr{\omega_0}\lr{0<\beta < 1}$.
Firstly, let us recall the modified Aubin--Yau functional $\widetilde{J}_{\omega_0}$ and $\widetilde{I}_{\omega_0}$ defined on $\mathscr{H}_X\lr{M,\omega_0}$ in \cite{tian2000uniqueness}:
\begin{equation*}
  \widetilde{J}_{\omega_0}\lr{\varphi}=\frac{1}{V}\int_0^1\int_M\dot{\varphi}_t \lr{e^{\theta_X}\omega_0^n - e^{\theta_X\lr{\omega_{\varphi_s}}}\omega_{\varphi_s}^n}\wedge\dd t,
\end{equation*}
\begin{equation*}
  \widetilde{I}_{\omega_0}\lr{\varphi}=\frac{1}{V}\int_M\varphi\lr{e^{\theta_X}\omega_0^n - e^{\theta_X\lr{\omega_{\varphi}}}\omega_{\varphi}^n},
\end{equation*}
where $\{\varphi_t\}\lr{0\leq t\leq 1}$ is a smooth path in $\mathscr{H}_X\lr{M,\omega_0}$ connecting $0$ and $\varphi$, and $\theta_X\lr{\omega}$ is defined as in \eqref{eqn:2}. If $X=0$, then $\widetilde{J}_{\omega_0}$ and $\widetilde{I}_{\omega_0}$ are just the original Aubin-Yau function $I_{\omega_0}$ and $J_{\omega_0}$.

\begin{mt}[\cite{cao2005kahler}]
\label{prop:1}
  $I_{\omega_0}$, $J_{\omega_0}$, $\widetilde{I}_{\omega_0}$ and $\widetilde{J}_{\omega_0}$ are positive on $\mathscr{H}_X\lr{M,\omega_0}$. There exist positive constants $C_1$ , $C_2$ , $C_3$ and $C_4$, where $C_1$ and $C_2$ is less than $1$ such that for any $\varphi\in \mathscr{H}_X\lr{M,\omega_0}$, such that
  \begin{align}
    0 \leq C_3 I_{\omega_0}\lr{\varphi} \leq  C_1\widetilde{I}_{\omega_0}\lr{\varphi} \leq \widetilde{I}_{\omega_0}\lr{\varphi}-\widetilde{J}_{\omega_0}\lr{\varphi}\leq C_2 \widetilde{I}_{\omega_0}\lr{\varphi} \leq C_4 I_{\omega_0}\lr{\varphi}.
  \end{align}
  Assume $\omega_\phi$ be another K\"{a}hler form in $[\omega_0]$, we have
  \begin{align}
  \label{eqn:5}
    \left|I_{\omega_\phi}\lr{\varphi-\phi}-I_{\omega_0}\lr{\varphi}\right|\leq\lr{n+1} \operatorname{OSC}\lr{\phi},
  \end{align}
  for all $\varphi\in\mathscr{H}_{X}\lr{M,\omega_0}.$
\end{mt}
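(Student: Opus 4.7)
The overall strategy is to express all four functionals as manifestly non-negative sums of weighted Dirichlet-type integrals along the straight-line path $\varphi_s = s\varphi$, which lies in $\mathscr{H}_X(M,\omega_0)$ since $\IM(X)\varphi = 0$. First I would differentiate $\widetilde{I}_{\omega_0}(\varphi_s)$ and $\widetilde{J}_{\omega_0}(\varphi_s)$ in $s$, use $\theta_X(\omega_{\varphi_s}) = \theta_X + sX(\varphi)$ together with $i_X\omega_{\varphi_s} = \im\,\bar\partial\theta_X(\omega_{\varphi_s})$ and the Killing condition $L_{\IM X}\omega_0 = 0$, and integrate by parts to obtain representations of the form
\begin{equation*}
\widetilde{I}_{\omega_0}(\varphi) = \frac{1}{V}\sum_{k=0}^{n-1}\int_M \im\,\partial\varphi\wedge\bar\partial\varphi\wedge\omega_0^{n-1-k}\wedge\omega_\varphi^k\, w_k,
\end{equation*}
\begin{equation*}
\widetilde{I}_{\omega_0}(\varphi) - \widetilde{J}_{\omega_0}(\varphi) = \frac{1}{V}\sum_{k=0}^{n-1}\frac{k+1}{n+1}\int_M \im\,\partial\varphi\wedge\bar\partial\varphi\wedge\omega_0^{n-1-k}\wedge\omega_\varphi^k\, w'_k,
\end{equation*}
where the positive weights $w_k, w'_k$ are of the form $e^{\theta_X(\omega_{\varphi_s})}$ for some $s\in[0,1]$, exactly as in the classical ($X\equiv 0$) identities.

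Each summand is then pointwise non-negative, which immediately yields positivity of $\widetilde{I}$ and of $\widetilde{I}-\widetilde{J}$; positivity of $\widetilde{J} = \widetilde{I} - (\widetilde{I}-\widetilde{J})$ follows from $(k+1)/(n+1)\leq 1$. Comparing coefficients term by term yields
\begin{equation*}
\frac{1}{n+1}\widetilde{I}_{\omega_0}(\varphi) \leq \widetilde{I}_{\omega_0}(\varphi) - \widetilde{J}_{\omega_0}(\varphi) \leq \frac{n}{n+1}\widetilde{I}_{\omega_0}(\varphi),
\end{equation*}
so one can take $C_1 = 1/(n+1)$ and $C_2 = n/(n+1)$, both strictly less than $1$. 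For the flanking estimates $C_3 I \leq C_1 \widetilde{I}$ and $C_2\widetilde{I}\leq C_4 I$, I would use that $\theta_X$ is smooth on the compact manifold $M$, so the weights $w_k$ are pinched between $e^{-\sup_M|\theta_X|}$ and $e^{\sup_M|\theta_X|}$, giving uniform two-sided bounds relating $\widetilde{I}$ to the untwisted $I$.

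For the cocycle-type estimate \eqref{eqn:5}, I would start from
\begin{equation*}
I_{\omega_\phi}(\varphi-\phi) - I_{\omega_0}(\varphi) = \frac{1}{V}\int_M(\varphi-\phi)(\omega_\phi^n - \omega_\varphi^n) - \frac{1}{V}\int_M\varphi(\omega_0^n - \omega_\varphi^n),
\end{equation*}
telescope $\omega_\phi^n - \omega_\varphi^n = -\im\,\partial\bar\partial(\varphi-\phi)\wedge\sum_k\omega_\phi^{n-1-k}\wedge\omega_\varphi^k$ and similarly for $\omega_0^n - \omega_\varphi^n$, and integrate by parts twice so that the $\varphi$-dependent terms cancel. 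The remainder is a sum of at most $n+1$ integrals of $\phi$ against probability measures of the form $\omega_0^{n-1-k}\wedge\omega_\varphi^k/V$ or $\omega_\phi^{n-1-k}\wedge\omega_\varphi^k/V$, each bounded in absolute value by $\operatorname{OSC}(\phi)$, producing the factor $n+1$.

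The main obstacle is the weighted integration by parts producing the first two displayed identities: in the unweighted case these are routine manipulations with $\im\,\partial\bar\partial$, but with the measure $e^{\theta_X(\omega_s)}\omega_s^n$ one picks up extra terms proportional to $X(\varphi)$ and $\bar{X}(\varphi)$. Recovering \emph{exactly} the combinatorial coefficients $(k+1)/(n+1)$, so that $C_1,C_2<1$ with the classical values, requires combining the invariance $\IM(X)\varphi = 0$ with the identity $L_X(e^{\theta_X(\omega)}\omega^n) = 0$ implied by the normalization of $\theta_X$, so that all $X$-correction terms cancel in pairs and only the untwisted combinatorial factors survive.
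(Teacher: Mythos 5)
Your handling of \eqref{eqn:5} is fine: telescoping $\omega_\phi^n-\omega_\varphi^n$ and $\omega_0^n-\omega_\varphi^n$, integrating by parts twice to kill the $\varphi$--terms, and pairing the remaining $n+1$ differences of unit-mass measures against $\phi$ does give the factor $(n+1)\operatorname{OSC}(\phi)$. The gap is in the first half. The cancellation you invoke does not occur: $L_X\bigl(e^{\theta_X(\omega)}\omega^n\bigr)$ is \emph{not} zero; from $i_X\omega=\im\,\overline{\partial}\theta_X(\omega)$ one gets $L_X\bigl(e^{\theta_X(\omega)}\omega^n\bigr)=\bigl(\Delta_\omega\theta_X(\omega)+|\partial\theta_X(\omega)|^2_\omega\bigr)e^{\theta_X(\omega)}\omega^n$, which merely has vanishing total integral (that is what makes the normalization consistent), so the weighted integrations by parts leave genuine $X$--terms. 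Consequently the classical combinatorial coefficients do not survive, and the constants $C_1=1/(n+1)$, $C_2=n/(n+1)$ you propose are actually false for the twisted functionals. Indeed, along the linear path one has
\begin{equation*}
\widetilde{I}_{\omega_0}(\varphi)=\int_0^1 f(t)\,dt,\qquad \widetilde{I}_{\omega_0}(\varphi)-\widetilde{J}_{\omega_0}(\varphi)=\int_0^1 t\,f(t)\,dt,\qquad f(t)=\frac{1}{V}\int_M|\partial\varphi|^2_{\omega_{t\varphi}}\,e^{\theta_X+tX(\varphi)}\,\omega_{t\varphi}^n\geq 0,
\end{equation*}
and already for $n=1$, where $|\partial\varphi|^2_{\omega_{t\varphi}}\,\omega_{t\varphi}=\im\,\partial\varphi\wedge\overline{\partial}\varphi$ is independent of $t$, any small $S^1$--invariant $\varphi$ with $\int_M\im\,\partial\varphi\wedge\overline{\partial}\varphi\,e^{\theta_X}X(\varphi)>0$ makes $f$ increasing to first order, so $\int_0^1 tf\,/\int_0^1 f>\tfrac12=\tfrac{n}{n+1}$. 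Hence no identity with a \emph{common} weight and the untwisted factors $(k+1)/(n+1)$ can hold, and term-by-term comparison cannot yield constants independent of $X$.

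The proposition is nevertheless provable from your path representation once the weight is controlled uniformly: since $\theta_X(\omega_{t\varphi})=\theta_X+tX(\varphi)$, Lemma \ref{lemma:3} gives a constant $A=A(\omega_0,X)$ with $|\theta_X+tX(\varphi)|\leq A$ for all $t\in[0,1]$ and all $\varphi$ (note that pinching the weights only by $e^{\pm\sup_M|\theta_X|}$, as in your last step, is not sufficient). Writing $g(t)$ for the unweighted density, so that $I=\int_0^1 g$, $I-J=\int_0^1 t\,g$, $J=\int_0^1(1-t)g$, and using the classical inequalities $\tfrac1n(I-J)\leq J\leq n(I-J)$ together with $e^{-A}g\leq f\leq e^{A}g$, one obtains $\widetilde{I}-\widetilde{J}\leq ne^{2A}\widetilde{J}$ and $\widetilde{J}\leq ne^{2A}(\widetilde{I}-\widetilde{J})$, which give the stated chain with $C_1=(1+ne^{2A})^{-1}$ and $C_2=ne^{2A}(1+ne^{2A})^{-1}$, both in $(0,1)$, while $C_3$, $C_4$ come from $e^{-A}I\leq\widetilde{I}\leq e^{A}I$. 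This is essentially how the quoted source \cite{cao2005kahler} proceeds (the paper itself gives no proof but cites it), and it explains why the proposition asserts abstract constants rather than the classical values you aimed for.
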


We define twisted Mabuchi $K$--energy $\widetilde{\mu}_{\omega_0,\eta}$ and Ding--functional $\widetilde{F}_{\omega_0,\eta}$ on the function space $\mathscr{H}_X\lr{M,\omega_0}$ as follow:
\begin{equation*}
\widetilde{\mu}_{\omega_0,\eta}\lr{\varphi} = \frac{\im n}{2\pi V}\int_0^1\int_M e^{\theta_X\lr{\omega_{\varphi_t}}} \partial\lr{h_{\omega_{\varphi_t}} - \theta_X\lr{\omega_{\varphi_t}}} \wedge \overline{\partial}\dot{\varphi}_t \omega_{\varphi_t}^{n-1}\wedge\dd t,
\end{equation*}
and
\begin{equation*}
  \widetilde{F}_{\omega_0,\eta}\lr{\varphi} = \widetilde{J}_{\omega_0}\lr{\varphi}- \frac{1}{V}\int_M\varphi e^{\theta_X}\omega_0^n- \frac{1}{\beta}\log\lr{\frac{1}{V}\int_M e^{h_{\omega_0}-\beta\varphi}\omega_0^n}.
\end{equation*}



For convenience, we define the functional $$\hat{F}_{\omega_0}=\widetilde{J}_{\omega_0}\lr{\varphi}- \frac{1}{V}\int_M\varphi e^{\theta_X}\omega_0^n.$$

\medskip

\begin{mt}
  The functional $\widetilde{F}_{\omega_0,\eta}$,$\widetilde{\mu}_{\omega_0,\eta}$ and $\hat{F}_{\omega_0}$ are well--defined, i.e. independent of the choice of the path $\{\varphi_t\}$. Furthermore, all of them satisfy the cocycle property.
\end{mt}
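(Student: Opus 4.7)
The plan is to handle the three functionals separately for well-definedness, then deduce the cocycle property from path-independence. For $\hat{F}_{\omega_0}$, apart from $\widetilde{J}_{\omega_0}(\varphi)$, every term depends only on the endpoint $\varphi$; since Proposition \ref{prop:1} (following \cite{cao2005kahler,tian2000uniqueness}) already records that $\widetilde{J}_{\omega_0}$ is path-independent, $\hat{F}_{\omega_0}$ is well-defined. The only additional term in $\widetilde{F}_{\omega_0,\eta}$ is a $\log$ of an integral in $\varphi$, again depending only on the endpoint, so the same remark gives well-definedness of $\widetilde{F}_{\omega_0,\eta}$.

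The content is showing path-independence of $\widetilde{\mu}_{\omega_0,\eta}$. I would regard the integrand as a $1$-form on $\mathscr{H}_X(M,\omega_0)$: at $\varphi$, in the direction $\psi$, set
\[
  \alpha_\varphi(\psi)=\frac{\im n}{2\pi V}\int_M e^{\theta_X(\omega_\varphi)}\,\partial\bigl(h_{\omega_\varphi}-\theta_X(\omega_\varphi)\bigr)\wedge\overline{\partial}\psi\,\omega_\varphi^{n-1},
\]
so that $\widetilde{\mu}_{\omega_0,\eta}(\varphi)=\int_0^1 \alpha_{\varphi_t}(\dot{\varphi}_t)\,dt$. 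Path-independence is equivalent to $\alpha$ being closed. I would fix a smooth two-parameter family $\varphi_{s,t}$ in $\mathscr{H}_X$ and compute $\partial_s\alpha_{\varphi_{s,t}}(\partial_t\varphi)-\partial_t\alpha_{\varphi_{s,t}}(\partial_s\varphi)$ using the variation identities
\[
  \delta_\psi \omega_\varphi=\im\partial\overline{\partial}\psi, \qquad \delta_\psi \theta_X(\omega_\varphi)=X(\psi),
\]
together with the variation of $h_{\omega_\varphi}$ deduced from $Ric(\omega_\varphi)-\beta\omega_\varphi-\eta=\im\partial\overline{\partial}h_{\omega_\varphi}$ and its normalization. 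After integrating by parts, the difference should reduce to a pairing that is manifestly symmetric in $(\partial_s\varphi,\partial_t\varphi)$, hence zero. Convexity of $\mathscr{H}_X(M,\omega_0)$ then upgrades closedness to exactness, so the path integral depends only on the endpoint.

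The main obstacle is the bookkeeping introduced by the holomorphic vector field: the weight $e^{\theta_X(\omega_\varphi)}$ varies along the path, and one must repeatedly use $\theta_X(\omega_\varphi)=\theta_X+X(\varphi)$, the compatibility $i_X\omega_\varphi=\im\overline{\partial}\theta_X(\omega_\varphi)$, and the $\operatorname{Im}X$-invariance of the potentials in order that the cross terms coming from differentiating $\theta_X(\omega_\varphi)$ cancel those produced by $L_X\omega_\varphi = d\,i_X\omega_\varphi$ upon integration by parts. This is the standard weighted version of Mabuchi's computation, and following the bookkeeping of Tian--Zhu as in \cite{tian2000uniqueness, cao2005kahler} should make the symmetry transparent.

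The cocycle property of each of the three functionals is then a direct consequence of path-independence: given $\varphi_1,\varphi_2,\varphi_3\in\mathscr{H}_X(M,\omega_0)$, concatenate a path from $\varphi_1$ to $\varphi_2$ with one from $\varphi_2$ to $\varphi_3$ and equate the resulting integral with one along a direct path from $\varphi_1$ to $\varphi_3$, which is exactly the cocycle identity.
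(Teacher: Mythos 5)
Your route is the standard one and is, in substance, what the paper relies on: the paper itself states this proposition without proof, recalling $\widetilde I_{\omega_0},\widetilde J_{\omega_0}$ from \cite{tian2000uniqueness}, and your argument (closedness of the one-form $\alpha$ on the convex set $\mathscr{H}_X(M,\omega_0)$, i.e. the weighted Tian--Zhu version of Mabuchi's computation) is exactly that standard argument. Two points should be tightened. First, Proposition \ref{prop:1} records only positivity and the comparison inequalities for $I,J,\widetilde I,\widetilde J$; it does not assert path-independence of $\widetilde J_{\omega_0}$, which you must either quote from \cite{tian2000uniqueness} or prove by the same closedness computation (its differential at $\varphi$ is $\psi\mapsto\frac1V\int_M\psi\,\bigl(e^{\theta_X}\omega_0^n-e^{\theta_X(\omega_\varphi)}\omega_\varphi^n\bigr)$). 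Second, for the cocycle property, bare concatenation of paths is not quite enough for $\hat F_{\omega_0}$ and $\widetilde F_{\omega_0,\eta}$, since their defining expressions contain basepoint-dependent data ($e^{\theta_X}\omega_0^n$, $h_{\omega_0}$); what you need, and what is true, is that the first variation depends only on the current metric: $\frac{\dd}{\dd t}\hat F_{\omega_0}(\varphi_t)=-\frac1V\int_M\dot\varphi_t\,e^{\theta_X(\omega_{\varphi_t})}\omega_{\varphi_t}^n$, and for the logarithmic term of $\widetilde F_{\omega_0,\eta}$ one uses $e^{h_{\omega_{\varphi_t}}}\omega_{\varphi_t}^n=e^{c_t}\,e^{h_{\omega_0}-\beta\varphi_t}\omega_0^n$ (the normalizing constant cancels in the quotient) to rewrite its derivative as $\frac1V\int_M\dot\varphi_t\,e^{h_{\omega_{\varphi_t}}}\omega_{\varphi_t}^n-\frac1V\int_M\dot\varphi_t\,e^{\theta_X(\omega_{\varphi_t})}\omega_{\varphi_t}^n$, while the integrand defining $\widetilde\mu_{\omega_0,\eta}$ manifestly involves only $\omega_{\varphi_t}$; with that observation, path-independence plus concatenation does yield the cocycle identity. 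Finally, the crux of the well-definedness of $\widetilde\mu_{\omega_0,\eta}$ --- that the antisymmetrized two-parameter variation of $\alpha$ vanishes after integration by parts, using $\delta_\psi h_{\omega_\varphi}=-\laplace_{\omega_\varphi}\psi-\beta\psi+\mathrm{const}$ and $\delta_\psi\theta_X(\omega_\varphi)=X(\psi)$ --- is asserted rather than carried out in your write-up; it is indeed the computation of \cite{tian2000uniqueness,cao2005kahler} and does close, but as written your proposal is a correct plan rather than a complete verification.
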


\medskip

\begin{yl}
\label{lemma:2}
  For any $\varphi\in \mathscr{H}_X\lr{M,\omega_0}$,
  \begin{eqn}
  \begin{split}
  \widetilde{\mu}_{\omega_0,\eta}\lr{\varphi}=&\beta\widetilde{F}_{\omega_0,\eta}\lr{\varphi}+\frac{1}{V}\int_M \lr{h_{\omega_0}-\theta_X}e^{\theta_X}\omega_0^n\\&-\frac{1}{V}\int_M \lr{h_{\omega_\varphi}
  -\theta_X-X\lr{\varphi}}e^{\theta_X+X\lr{\varphi}}\omega_\varphi^n.
  \end{split}
  \end{eqn}
  Furthermore, we have
  \begin{eqn}
  \label{eqn:3}
  \widetilde{\mu}_{\omega_0,\eta}\lr{\varphi}\geq\beta\widetilde{F}_{\omega_0,\eta} \lr{\varphi}+\frac{1}{V}\int_M \lr{h_{\omega_0}-\theta_X}e^{\theta_X}\omega_0^n.
  \end{eqn}
\end{yl}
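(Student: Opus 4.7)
The plan is to establish the identity by showing that both sides, as functionals of $\varphi\in\mathscr{H}_X(M,\omega_0)$, vanish at $\varphi=0$ and share the same first variation along any smooth path $\{\varphi_t\}$; the inequality \eqref{eqn:3} then follows from Jensen's inequality.

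At $\varphi=0$ both sides vanish: $\widetilde{\mu}_{\omega_0,\eta}(0)=0$ by construction, and on the right-hand side the two bulk integrals coincide since $\omega_\varphi=\omega_0$, $\theta_X(\omega_\varphi)=\theta_X$ and $h_{\omega_\varphi}=h_{\omega_0}$. For the first variation of the right-hand side I would differentiate term by term. A central input is the identity
\begin{equation*}
e^{h_{\omega_{\varphi_t}}}\omega_{\varphi_t}^n=\frac{V}{Z_t}\,e^{h_{\omega_0}-\beta\varphi_t}\omega_0^n,\qquad Z_t:=\int_M e^{h_{\omega_0}-\beta\varphi_t}\omega_0^n,
\end{equation*}
which follows from \eqref{Ricci pote} (the combination $h_{\omega_{\varphi_t}}-h_{\omega_0}+\log(\omega_{\varphi_t}^n/\omega_0^n)+\beta\varphi_t$ is killed by $i\partial\bar\partial$ and hence constant, and then fixed by the normalization of $h_\omega$). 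A direct computation then gives $\frac{d}{dt}\widetilde{F}_{\omega_0,\eta}(\varphi_t)=\frac{1}{V}\int_M\dot\varphi_t(e^{h_{\omega_{\varphi_t}}}-e^{\theta_X(\omega_{\varphi_t})})\omega_{\varphi_t}^n$. For the last integral of the right-hand side I would use $\theta_X(\omega_\varphi)=\theta_X+X(\varphi)$, differentiate the above identity to obtain $\dot h_{\omega_{\varphi_t}}=-\Delta_{\omega_{\varphi_t}}\dot\varphi_t-\beta\dot\varphi_t+\dot c_t$ (with $\dot c_t=\tfrac{\beta}{V}\int\dot\varphi_t e^{h_{\omega_{\varphi_t}}}\omega_{\varphi_t}^n$ forced by the normalization of $h$), and then invoke the self-adjointness of the drift Laplacian $\Delta_{\omega,X}:=\Delta_\omega+X$ on $\IM X$-invariant functions with respect to the weighted volume $e^{\theta_X(\omega)}\omega^n$. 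The various pieces combine so that the derivative of the right-hand side equals
\begin{equation*}
-\frac{1}{V}\int_M\dot\varphi_t\,\Delta_{\omega_{\varphi_t},X}\bigl(h_{\omega_{\varphi_t}}-\theta_X(\omega_{\varphi_t})\bigr)\,e^{\theta_X(\omega_{\varphi_t})}\omega_{\varphi_t}^n.
\end{equation*}
This matches the derivative of $\widetilde{\mu}_{\omega_0,\eta}$, which is obtained from its defining integrand by Stokes' theorem, together with the relation $i_X\omega_{\varphi_t}=i\bar\partial\theta_X(\omega_{\varphi_t})$, which converts the $\bar\partial\theta_X(\omega_{\varphi_t})\wedge\partial(\cdots)$ contribution into the $X$-derivative piece of $\Delta_{\omega,X}$.

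For \eqref{eqn:3}, let $d\mu_\varphi:=V^{-1}e^{\theta_X(\omega_\varphi)}\omega_\varphi^n$, a probability measure by the normalization of $\theta_X(\omega_\varphi)$. Setting $u:=h_{\omega_\varphi}-\theta_X(\omega_\varphi)$ and applying Jensen's inequality to the convex function $e^x$,
\begin{equation*}
\exp\Bigl(\int_M u\,d\mu_\varphi\Bigr)\leq\int_M e^u\,d\mu_\varphi=\frac{1}{V}\int_M e^{h_{\omega_\varphi}}\omega_\varphi^n=1,
\end{equation*}
so $\int_M u\,d\mu_\varphi\leq 0$, which is precisely the nonnegativity of the third term on the right-hand side of the identity. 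The main technical hurdle is reconciling the two derivatives: the left-hand side is naturally expressed via $\partial\wedge\bar\partial$ and $\omega^{n-1}$, while the right-hand side produces weighted-Laplacian integrals, and matching them cleanly requires the pointwise identity $n\,\bar\partial(e^{\theta_X(\omega)}\partial f)\wedge\omega^{n-1}=i\,e^{\theta_X(\omega)}\Delta_{\omega,X}f\cdot\omega^n$, which in turn hinges on $i_X\omega=i\bar\partial\theta_X(\omega)$.
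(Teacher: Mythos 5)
Your argument is correct: the identity $e^{h_{\omega_\varphi}}\omega_\varphi^n=\frac{V}{Z}e^{h_{\omega_0}-\beta\varphi}\omega_0^n$, the cancellation of the $\beta$-terms after differentiating along a path, the weighted integration by parts via $i_X\omega=\im\bar\partial\theta_X(\omega)$, and the Jensen step for \eqref{eqn:3} all check out, and the real, $\IM X$-invariant setting justifies the self-adjointness of $\laplace_\omega+X$ that you invoke. The paper states this lemma without proof, but your route (agreement at $\varphi=0$ plus matching first variations, then Jensen) is exactly the standard argument behind it — the same key relation appears later in the paper as \eqref{eqn:13} — so this is essentially the intended proof.
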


\medskip

\begin{yl}
\label{lemma:f}
  Assume that $\eta_1-\eta_2=\im\partial\overline{\partial}f$, for some smooth function $f$, then
  \begin{equation*}
    \left|\widetilde{\mu}_{\omega_0,\eta_1}\lr{\varphi}-\widetilde{\mu}_{\omega_0,\eta_2} \lr{\varphi}\right| \leq\operatorname{OSC}\lr{f}.
  \end{equation*}
\end{yl}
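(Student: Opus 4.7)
The plan is to reduce $\widetilde{\mu}_{\omega_{0},\eta_{1}}(\varphi)-\widetilde{\mu}_{\omega_{0},\eta_{2}}(\varphi)$ to the explicit integral $\frac{1}{2\pi V}\int_{M}f(e^{\theta_{X}(\omega_{\varphi})}\omega_{\varphi}^{n}-e^{\theta_{X}(\omega_{0})}\omega_{0}^{n})$ and then conclude by the standard constant-shift trick, using that both measures $e^{\theta_{X}(\omega)}\omega^{n}$ have total mass $V$.

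From $\Ric(\omega)-\beta\omega-\eta=\im\pa\pb h_{\omega,\eta}$ and $\eta_{1}-\eta_{2}=\im\pa\pb f$, the two Ricci potentials differ by $h^{(1)}_{\omega}-h^{(2)}_{\omega}=-f+c_{\omega}$ for a spatial constant $c_{\omega}$ pinned down by $\int_{M}e^{h_{\omega,\eta_{i}}}\omega^{n}=V$. Choosing any smooth path $\{\varphi_{t}\}\subset\mathscr{H}_{X}(M,\omega_{0})$ from $0$ to $\varphi$, the constant $c_{t}$ is killed by $\pa$, so subtracting the two defining integrals yields
\begin{equation*}
\widetilde{\mu}_{\omega_{0},\eta_{1}}(\varphi)-\widetilde{\mu}_{\omega_{0},\eta_{2}}(\varphi)=-\frac{\im n}{2\pi V}\int_{0}^{1}\int_{M}e^{\theta_{t}}\pa f\wedge\pb\dot\varphi_{t}\wedge\omega_{t}^{n-1}\,dt.
\end{equation*}

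The heart of the argument is an integration by parts. Applying Stokes to $d(fe^{\theta_{t}}\im\pb\dot\varphi_{t}\wedge\omega_{t}^{n-1})$ and using closedness of $\omega_{t}$, then expanding $\pa(fe^{\theta_{t}})=e^{\theta_{t}}\pa f+fe^{\theta_{t}}\pa\theta_{t}$ with $\pa\theta_{t}=\im\, i_{\overline{X}}\omega_{t}$ (from conjugating $i_{X}\omega_{t}=\im\pb\theta_{t}$, since $\theta_{t}$ is real), together with the algebraic identity $i_{\overline{X}}\omega_{t}\wedge\pb\dot\varphi_{t}\wedge\omega_{t}^{n-1}=-\frac{1}{n}\overline{X}(\dot\varphi_{t})\omega_{t}^{n}$ (obtained by applying $i_{\overline{X}}$ to the identically vanishing top-bidegree form $\pb\dot\varphi_{t}\wedge\omega_{t}^{n}$) and the K\"ahler identity $\im\pa\pb\dot\varphi_{t}\wedge\omega_{t}^{n-1}=\frac{1}{n}\laplace_{\omega_{t}}\dot\varphi_{t}\cdot\omega_{t}^{n}$, one obtains
\begin{equation*}
\im n\int_{M}e^{\theta_{t}}\pa f\wedge\pb\dot\varphi_{t}\wedge\omega_{t}^{n-1}=-\int_{M}fe^{\theta_{t}}\bigl(\laplace_{\omega_{t}}\dot\varphi_{t}+\overline{X}(\dot\varphi_{t})\bigr)\omega_{t}^{n}.
\end{equation*}
Since $\dot\varphi_{t}$ is $\IM X$-invariant, $\overline{X}(\dot\varphi_{t})=X(\dot\varphi_{t})$, and a direct computation using $\dot\theta_{t}=X(\dot\varphi_{t})$ and $\frac{d}{dt}\omega_{t}^{n}=n\im\pa\pb\dot\varphi_{t}\wedge\omega_{t}^{n-1}$ identifies this right-hand integrand as precisely $-f\tfrac{d}{dt}(e^{\theta_{t}}\omega_{t}^{n})$.

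Substituting and exchanging the $t$-integral with the total derivative produces
\begin{equation*}
\widetilde{\mu}_{\omega_{0},\eta_{1}}(\varphi)-\widetilde{\mu}_{\omega_{0},\eta_{2}}(\varphi)=\frac{1}{2\pi V}\int_{M}f\bigl(e^{\theta_{X}(\omega_{\varphi})}\omega_{\varphi}^{n}-e^{\theta_{X}(\omega_{0})}\omega_{0}^{n}\bigr).
\end{equation*}
Because $\int_{M}e^{\theta_{X}(\omega)}\omega^{n}=V$ for every $\omega\in\mathscr{K}_{X}(\omega_{0})$, the right-hand side is invariant under $f\mapsto f-\inf_{M}f$; the shifted function lies in $[0,\operatorname{OSC}(f)]$, so each of the two integrals is at most $V\operatorname{OSC}(f)$ and the claimed estimate follows (with slack from the $\tfrac{1}{2\pi}$ factor). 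The main technical difficulty is the careful sign-tracking in the integration by parts---particularly the identity for $i_{\overline{X}}\omega_{t}\wedge\pb\dot\varphi_{t}\wedge\omega_{t}^{n-1}$---and the observation that the $\overline{X}$-drift produced by $\pa\theta_{t}$ combines with $\laplace_{\omega_{t}}$ into exactly the weighted heat operator governing the evolution of $e^{\theta_{X}(\omega_{t})}\omega_{t}^{n}$.
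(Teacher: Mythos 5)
Your proof is correct. The paper states this lemma without giving a proof, and your argument is exactly the natural (evidently intended) one: since the two Ricci potentials differ by $-f$ up to a constant, the difference of the two twisted $K$--energies telescopes, after the weighted integration by parts, to $\frac{1}{2\pi V}\int_M f\lr{e^{\theta_X\lr{\omega_\varphi}}\omega_\varphi^n-e^{\theta_X}\omega_0^n}$, and the oscillation bound follows from the common normalization $\int_M e^{\theta_X\lr{\omega}}\omega^n=V$. I checked the delicate points --- $\partial\theta_t=\im\, i_{\overline{X}}\omega_t$, the contraction identity $i_{\overline{X}}\omega_t\wedge\overline{\partial}\dot\varphi_t\wedge\omega_t^{n-1}=-\frac{1}{n}\overline{X}\lr{\dot\varphi_t}\omega_t^n$, and $\overline{X}\lr{\dot\varphi_t}=X\lr{\dot\varphi_t}$ from $\IM X$--invariance, so that the drift and Laplacian assemble into $\frac{\dd}{\dd t}\lr{e^{\theta_t}\omega_t^n}$ --- and they are all consistent with the paper's conventions.
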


\section{Existence and uniqueness result for the twisted K\"{a}hler--Ricci soliton}
\label{section:3}
As in K\"{a}hler--Einstein and K\"{a}hler--Ricci soliton cases, finding twisted K\"{a}hler--Ricci soliton can be reduced  to solving the complex Monge--Amp\`{e}re equation \eqref{eqn:MAKRS}. To solve \eqref{eqn:MAKRS}, we use the continuity method. We consider a family of complex Monge--Amp\`{e}re equations,
\begin{equation}
\label{eqn:MAKRSt}
  \frac{\lr{\omega_0 + \im \partial \overline{\partial} \varphi}^n}{\omega_0^n} = e^{h_{\omega_0}-\beta t\varphi-\theta_X-X\lr{\varphi}}
\end{equation}
i.e.
\begin{equation}
\label{eqn:KRSt}
  Ric{\omega_\varphi}=\beta t\omega_\varphi+\lr{\beta-\beta t}\omega_0 +\eta+ L_X{\omega_\varphi}
\end{equation}
and set
\begin{equation*}
  S=\{t\in[0,1]|\text{ \eqref{eqn:MAKRSt} is solvable for }t\}.
\end{equation*}
For our convenience, we assume $\eta=(1-\beta)(\omega_0+\im \partial\overline{\partial} f_{\eta})$, where $f_{\eta}$ is a smooth function such that $\mathop{\sup}\limits_M f_\eta=0$. By \cite{zhu2000kahler} and the property of $\eta$, we know that \eqref{eqn:MAKRSt} is solvable for $t=0$, thus $S$ is not empty. If we can prove that $S$ is both open and close, then we must have $S=[0,1]$ and hence the complex Monge--Amp\`{e}re equation \eqref{eqn:MAKRS} is solvable. In the proof of the openness and closeness of $S$, we need the assumption that $\eta$ is semipositive. The key point is that the semipositivity of $\eta$ will lead to a lower bound of the Ricci curvature by a positive constant. Then we can apply the implicit function theorem to prove the openness and obtain a lower bound of the Green's function for the weighted Laplace, which is crucial to get $C^0$ estimate. We will follow methods of \cite{cao2005kahler}, \cite{tian1997kahler} and \cite{zhanggeneralized1} to obtain the openness and closeness. First, we present the following proposition for further discussion.
\begin{mt}
  Let $0<\tau\leq 1$, and suppose that \eqref{eqn:MAKRSt} is solvable at $t=\tau$. We have the following,
  \begin{enumerate}[(1).]
    \item If $\ 0<\tau<1$, there exists some $\varepsilon>0$ such that \eqref{eqn:MAKRSt} can be solvable uniquely for $t\in\lr{\tau-\varepsilon, \tau +\varepsilon} \cap \lr{0 , 1}.$
    \item $S$ is also open near $t=0$, i.e. $\exists$ a small positive number $\varepsilon$ such that there is a smooth family of solutions of \eqref{eqn:MAKRSt} for $t\in \lr{0,\varepsilon}$.
    \item If $\eta$ is strictly positive at a point, $S$ is open near $1$, i.e. \eqref{eqn:MAKRSt} is solvable for $t\in(1-\varepsilon,1]$ for some small positive $\varepsilon$.
  \end{enumerate}
\end{mt}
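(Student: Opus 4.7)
The plan is to recast equation \eqref{eqn:MAKRSt} as $F(t,\varphi)=0$, where
$$F(t,\varphi)=\log\frac{(\omega_0+\im\partial\overline{\partial}\varphi)^n}{\omega_0^n}-h_{\omega_0}+\beta t\varphi+\theta_X+X(\varphi),$$
viewed as a map between H\"older spaces of $\IM X$-invariant functions on $M$. A direct computation at a solution $\varphi_\tau\in F^{-1}(0)$ yields the linearization
$$L_\tau\psi=\Delta_{\omega_{\varphi_\tau}}\psi+X(\psi)+\beta\tau\,\psi.$$
All three statements will then follow from the implicit function theorem once $L_\tau$ is shown to be invertible, with part (2) requiring an additional Lyapunov--Schmidt step since $L_0$ has a one-dimensional kernel.

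Cases (1) and (3) hinge on a weighted Lichnerowicz-type eigenvalue estimate of Tian--Zhu type. Reading off \eqref{eqn:KRSt} gives
$$\Ric(\omega_{\varphi_\tau})-L_X\omega_{\varphi_\tau}=\beta\tau\,\omega_{\varphi_\tau}+\beta(1-\tau)\omega_0+\eta.$$
The operator $-(\Delta_{\omega_{\varphi_\tau}}+X)$ is self-adjoint on $\IM X$-invariant real functions for the weighted measure $e^{\theta_X(\omega_{\varphi_\tau})}\omega_{\varphi_\tau}^n$; its spectrum is $\{0,\mu_1,\mu_2,\dots\}$ with kernel $\mathbb{R}$, and any lower bound $\Ric-L_X\omega\ge\lambda\omega$ forces $\mu_1\ge\lambda$. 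For $\tau\in(0,1)$ the compactness of $M$ together with $\beta(1-\tau)\omega_0>0$ and $\eta\ge 0$ supplies a constant $c>0$ with $\beta(1-\tau)\omega_0+\eta\ge c\,\omega_{\varphi_\tau}$, hence $\mu_1\ge\beta\tau+c>\beta\tau$; any $\psi\in\ker L_\tau$ is therefore constant, and the $\beta\tau\psi$ term forces the constant to vanish. For $\tau=1$ only $\eta$ remains in the error term, and a weighted Bochner identity shows that an eigenfunction of $-(\Delta_{\omega_{\varphi_1}}+X)$ with eigenvalue exactly $\beta$ satisfies $\eta(\nabla\psi,\overline{\nabla}\psi)\equiv 0$ pointwise; the strict positivity of $\eta$ at one point then forces $\nabla\psi$ to vanish on an open set and, by analyticity, everywhere, so again $\psi\equiv 0$. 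In either case Fredholm theory on $C^{4,\alpha}\to C^{2,\alpha}$ upgrades injectivity to an isomorphism, and the standard implicit function theorem produces the claimed smooth family.

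Part (2) is the main obstacle, because at $\tau=0$ one has $\ker L_0=\operatorname{coker}L_0=\mathbb{R}$ and the implicit function theorem fails directly. The plan is first to normalize the solution at $t=0$ by subtracting its weighted mean so that
$$\int_M\varphi_0\,e^{\theta_X(\omega_{\varphi_0})}\omega_{\varphi_0}^n=0.$$
Writing $\varphi=\varphi_0+\psi$ and Taylor-expanding $F$ near $(0,\varphi_0)$ yields
$$(L_0+\beta t)\psi=-\beta t\,\varphi_0+Q(t,\psi),$$
where $Q(t,\psi)$ is at least quadratic in $\psi$. Applying the $\tau=0$ eigenvalue estimate to $\Ric-L_X\omega=\beta\omega_0+\eta\ge c'\omega_{\varphi_0}$ shows that for all sufficiently small $t>0$, $-\beta t$ lies in the resolvent set of $L_0$; the inverse $(L_0+\beta t)^{-1}$ then acts by $(\beta t)^{-1}$ on the constant mode and is uniformly bounded on its $L^2$-orthogonal complement. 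Because the normalization places $\varphi_0$ entirely in that complement, $(L_0+\beta t)^{-1}(\beta t\,\varphi_0)=O(t)$ as $t\to 0^+$, so a Banach contraction argument in a small $C^{4,\alpha}$-ball produces a unique $\psi_t$ depending smoothly on $t\in(0,\varepsilon)$. The delicate point is precisely this asymmetric control of $(L_0+\beta t)^{-1}$ across $t=0$, which must be tracked carefully enough for the contraction to close with the correct $t$-dependence; this is the essential feature that distinguishes the endpoint $\tau=0$ from every interior $\tau$ and is where the proof differs substantively from the standard K\"ahler--Ricci soliton continuity argument.
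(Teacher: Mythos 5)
Your treatment of (1) and (3) is essentially the paper's own argument: the same linearized operator $L_{t,\varphi}\phi=\laplace_{\omega_\varphi}\phi+X(\phi)+t\beta\phi$, the same weighted Bochner/Lichnerowicz identity with respect to the measure $e^{\theta_X(\omega_\varphi)}\omega_\varphi^n$, invertibility for $\tau<1$ from the strictly positive term $\beta(1-\tau)\omega_0$, and at $\tau=1$ the forced vanishing of $\eta(\grad\psi,J\grad\psi)$ plus the strict positivity of $\eta$ at one point. One repair is needed in (3): "by analyticity" is not available, since the eigenfunction solves an elliptic equation whose coefficients are only smooth. The paper's mechanism is to keep the second nonnegative term in the Bochner identity, $\int_M|\grad^{(1,0)}\grad^{(1,0)}\psi|^2e^{\theta_X+X(\varphi)}\omega_\varphi^n=0$, which makes $\grad^{(1,0)}\psi$ a holomorphic vector field; its vanishing near the point where $\eta>0$ then propagates to all of $M$. (Aronszajn-type unique continuation would also do.) This is a local fix, not a change of strategy.

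Part (2) is where you go beyond the paper, which simply lumps $\tau=0$ into the case $0\le\tau<1$ and asserts invertibility of $L_{\tau,\varphi}$ (literally false at $\tau=0$, where the kernel is the constants); you correctly isolate the degeneracy. But the contraction you describe does not close as written: the remainder $Q$ has no reason to have zero weighted mean, and on its constant component $(L_0+\beta t)^{-1}$ acts by $(\beta t)^{-1}$, so your iteration maps a ball of radius $\rho$ into one of radius of order $Ct+C\rho^{2}/(\beta t)$; taking $\rho\sim Kt$ forces $C+CK^{2}/\beta\le K$, which fails unless $C$ is small. This is exactly the "delicate point" you flag and leave open, so as it stands (2) has a genuine gap. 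It is, however, easily repaired using structure you did not exploit: since $F(0,\varphi_0)=0$, the remainder is $Q(\psi)=\log\frac{(\omega_{\varphi_0}+\im\pa\pb\psi)^n}{\omega_{\varphi_0}^n}-\laplace_{\omega_{\varphi_0}}\psi$, which depends on $\psi$ only through $\im\pa\pb\psi$, so constants decouple. Carry out the Lyapunov--Schmidt splitting you announce: first solve the projected equation $P^{\perp}\bigl(L_t u+\beta t\varphi_0+Q(u)\bigr)=0$ for zero-mean $u$, where $L_t$ preserves the weighted orthogonal splitting and its inverse on the complement of constants is bounded uniformly in $t$ (an honest contraction giving $u_t=O(t)$), and then set the constant $c_t=-\overline{Q(u_t)}/(\beta t)=O(t)$ from the mean component. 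Equivalently, use the identity $F(t,\varphi+c)=F(t,\varphi)+\beta t c$ to reduce to solving the equation modulo constants, where $\laplace_{\omega_{\varphi_0}}+X$ is an isomorphism from zero-mean functions onto the quotient. Either version completes your sketch of (2).
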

\begin{proof}
  For $2\leq \gamma\in \mathbb{Z}^+$ and $0<\alpha<1$, we define
  \begin{equation*}
    \mathscr{H}_X^{\gamma,\alpha}\lr{\omega_0}=\{\phi \in C^{\gamma,\alpha}(M)|\ \omega_0 + \im \partial \overline{\partial} \phi>0,\text{ and }\operatorname{Im}X\lr{\phi} =0\},
  \end{equation*}
  and
  \begin{equation*}
    \mathscr{W}_X^{\gamma,\alpha}=\{\phi \in C^{\gamma,\alpha}(M)|\ \operatorname{Im}X\lr{\phi} =0\}.
  \end{equation*}
  It is easy to see that the tangent space of $\mathscr{H}_X^{\gamma,\alpha}(\omega_0)$ is $\mathscr{W}_X^{\gamma,\alpha}$.  Consider the operator $\Psi:\mathscr{H}_X^{\gamma,\alpha}\lr{\omega_0}\times [0,1]\ra C^{\gamma-2,\alpha}\lr{M}$ defined by
  \begin{equation*}
    \Psi\lr{\varphi,t}:=\log\frac{\lr{\omega_0+\im\partial \overline{\partial}\varphi}^n}{\omega_0^n}-h_{\omega_0} + t \beta \varphi + X \lr{\varphi}.
  \end{equation*}
  The linearized operator of $\Psi$ at $\lr{t,\varphi}$ is given by
  \begin{equation*}
    L_{t,\varphi}\lr{\phi}=\laplace_{\omega_\varphi}\phi+t\beta\phi+X\lr{\phi},
  \end{equation*}
  for $\phi\in \mathscr{W}_X^{\gamma,\alpha}$. Now we prove that $L_{t,\varphi}$ is invertible. Assume $\lambda_{1,t}$ is the first eigenvalue of $L_{t,\varphi}$, and $\phi$ is an eigenfunction of $L_{t,\varphi}$ with respect to $\lambda_{1,t}$, i.e. $L_{t,\varphi}\lr{\phi}=-\lambda_{1,t}\phi$. Applying the Bochner formula and equation \eqref{eqn:MAKRSt}, we get that,
  \begin{equation}
  \label{eqn:eigenfunction}
  \begin{split}
  &\relphantom{=}\lambda_{1,t}\int_M\left|\grad_{\omega_\varphi}\phi\right|_{\omega_{\varphi}}^2 e^{\theta_X + X\lr{\varphi}} \omega_\varphi^n \\
  & =-\int_M\langle \grad_{\omega_\varphi} \lr{\laplace_{\omega_\varphi}\phi+t\beta \phi + X \lr{\phi}},  \grad_{\omega_\varphi}\phi\rangle_{\omega_\varphi}e^{\theta_X+X\lr{\varphi}} \omega_\varphi^n \\
  &=\frac{1}{2}\int_{M}\lr{\beta \lr{1-t}\omega_0 +\eta }\lr{\grad_{\omega_\varphi}\phi, J\lr{\grad_{\omega_\varphi}\phi}}e^{\theta_X+X\lr{\varphi}} \omega_\varphi^n\\
  &\repl{=}+\int_M\left| \grad^{(1,0)}_{\omega_\varphi} \grad^{(1,0)}_{\omega_\varphi} \phi \right|_{\omega_\varphi}^2 e^{\theta_X+X\lr{\varphi}} \omega_\varphi^n
  \end{split}
  \end{equation}
  In the case $0\leq\tau<1$, we have
  \begin{equation*}
  \begin{split}
  &\relphantom{=}\lambda_{1,\tau}\int_M\left|\grad_{\omega_\varphi}\phi \right|_{\omega_{\varphi}}^2 e^{\theta_X + X\lr{\varphi}} \omega_\varphi^n\\
  &\geq\frac{\lr{1-\tau}\beta}{2}\int_M\omega_0\lr{\grad_{\omega_\varphi}\phi, J\lr{\grad_{\omega_\varphi}\phi}} e^{\theta_X + X\lr{\varphi}} \omega_\varphi^n\\
  &>0,
  \end{split}
  \end{equation*}
  which implies that $\lambda_{1,\tau}>0$, i.e. $L_{\tau,\varphi}$ is invertible. And consequently, the first and second statements of the proposition hold.

  When $\tau=1$, if $\lambda_{1,\tau}=0$, then
  \begin{equation*}
    \frac{1}{2}\int_{M}\eta\lr{\grad_{\omega_\varphi}\phi, J\lr{\grad_{\omega_\varphi}\phi}}e^{\theta_X+X\lr{\varphi}} \omega_\varphi^n+\int_M\left| \grad^{(1,0)}_{\omega_\varphi} \grad^{(1,0)}_{\omega_\varphi} \phi \right|_{\omega_\varphi} ^2 e^{\theta_X+X\lr{\varphi}} \omega_\varphi^n=0.
  \end{equation*}
  Since $\eta$ is smooth and strictly positive at some point $p$, we get that $\grad^{(1,0)}_{\omega_\varphi}\phi=0$ in a neighborhood of $p$, which implies that $\grad^{(1,0)}_{\omega_\varphi}\phi=0.$ So $L_{1,\varphi}$ is invertible. And the last statement holds.
\end{proof}

\medskip

Next, we prove the closeness of $S$. Let $\{\varphi_t \}$ be a smooth family of solution of \eqref{eqn:MAKRSt} for $t\in(0,1]$. Along the path $\{\varphi_t\}$, we have
\begin{align}
\label{eqn:h-btvar=V}
  \int_M e^{ h_{\omega_0} - \beta t \varphi_t}\omega_0^n = \int_M e^{ \theta_X + X \lr{ \varphi_t}}\omega_{ \varphi_t}^n = V.
\end{align}
Differentiating \eqref{eqn:h-btvar=V} with respect to $t$, we have
\begin{equation}
\label{eqn:6}
  \int_M \varphi_t e^{ h_{\omega_0} - \beta t \varphi_t}\omega_0^n = - \int_M t \dot{\varphi}_t e^{ h_{\omega_0} - \beta t \varphi_t}\omega_0^n.
\end{equation}

\medskip

\begin{mt}
\label{prop:2.2}
Let $\{\varphi_t\}$ be a smooth family of solution of \eqref{eqn:MAKRSt} for $t\in(0,1]$. Then
\begin{equation*}
  \hat{ F}_{ \omega_0} \lr{\varphi_t} = - \frac{1} {t}\int_0^t \lr{ \widetilde{I}_{\omega_0} \lr{ \varphi_s}- \widetilde{J}_{\omega_0} \lr{ \varphi_s}}\dd t.
\end{equation*}
\end{mt}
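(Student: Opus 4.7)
The plan is to compute $\frac{d}{dt}\bigl(t\,\hat{F}_{\omega_0}(\varphi_t)\bigr)$, recognize the right-hand side as $-(\widetilde{I}_{\omega_0}(\varphi_t)-\widetilde{J}_{\omega_0}(\varphi_t))$, and then integrate from $0$ to $t$. Multiplying through by $\frac{1}{t}$ yields the stated identity, so the whole proof is a differentiation/reorganization argument.

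First I would compute the derivative of $\hat{F}_{\omega_0}(\varphi_t)$. By the definition of $\widetilde{J}_{\omega_0}$ and the definition of $\hat{F}_{\omega_0}$,
\begin{equation*}
\frac{d}{dt}\hat{F}_{\omega_0}(\varphi_t)=\frac{1}{V}\int_M\dot{\varphi}_t\bigl(e^{\theta_X}\omega_0^n-e^{\theta_X(\omega_{\varphi_t})}\omega_{\varphi_t}^n\bigr)-\frac{1}{V}\int_M\dot{\varphi}_t\,e^{\theta_X}\omega_0^n=-\frac{1}{V}\int_M\dot{\varphi}_t\,e^{\theta_X(\omega_{\varphi_t})}\omega_{\varphi_t}^n.
\end{equation*}
Now I would substitute the soliton equation \eqref{eqn:MAKRSt}, together with $\theta_X(\omega_{\varphi_t})=\theta_X+X(\varphi_t)$, to rewrite $e^{\theta_X(\omega_{\varphi_t})}\omega_{\varphi_t}^n=e^{h_{\omega_0}-\beta t\varphi_t}\omega_0^n$, obtaining
\begin{equation*}
t\,\frac{d}{dt}\hat{F}_{\omega_0}(\varphi_t)=-\frac{1}{V}\int_M t\,\dot{\varphi}_t\,e^{h_{\omega_0}-\beta t\varphi_t}\omega_0^n.
\end{equation*}

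Next I would invoke the differentiated normalization identity \eqref{eqn:6}, which gives exactly $-\int_M t\,\dot{\varphi}_t\,e^{h_{\omega_0}-\beta t\varphi_t}\omega_0^n=\int_M\varphi_t\,e^{h_{\omega_0}-\beta t\varphi_t}\omega_0^n$. Applying the soliton equation once more to the right-hand side converts this to $\int_M\varphi_t\,e^{\theta_X(\omega_{\varphi_t})}\omega_{\varphi_t}^n$, so
\begin{equation*}
t\,\frac{d}{dt}\hat{F}_{\omega_0}(\varphi_t)=\frac{1}{V}\int_M\varphi_t\,e^{\theta_X(\omega_{\varphi_t})}\omega_{\varphi_t}^n.
\end{equation*}
Adding $\hat{F}_{\omega_0}(\varphi_t)=\widetilde{J}_{\omega_0}(\varphi_t)-\frac{1}{V}\int_M\varphi_t\,e^{\theta_X}\omega_0^n$ then collapses the two boundary integrals into precisely $-\widetilde{I}_{\omega_0}(\varphi_t)$ plus $\widetilde{J}_{\omega_0}(\varphi_t)$, yielding
\begin{equation*}
\frac{d}{dt}\bigl(t\,\hat{F}_{\omega_0}(\varphi_t)\bigr)=-\bigl(\widetilde{I}_{\omega_0}(\varphi_t)-\widetilde{J}_{\omega_0}(\varphi_t)\bigr).
\end{equation*}

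Finally I would integrate from $0$ to $t$. The only subtle point, and the main thing to check carefully, is that $\lim_{s\to 0^+}s\,\hat{F}_{\omega_0}(\varphi_s)=0$: this follows because $\hat{F}_{\omega_0}$ is continuous along the smooth family $\{\varphi_s\}$ (extended to $s=0$ by the openness statement, part (2) of the preceding proposition), hence stays bounded as $s\to 0^+$. Dividing by $t$ gives the proposition. The computation is routine; the only delicate step is the bookkeeping at $t=0$ and the correct use of \eqref{eqn:6} to trade $\dot{\varphi}_t$ for $\varphi_t/t$, which is what makes the averaged formula come out with no boundary term.
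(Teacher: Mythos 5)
Your proof is correct and follows essentially the same route as the paper: differentiate $\hat F_{\omega_0}(\varphi_t)$ along the path, use the equation \eqref{eqn:MAKRSt} and the differentiated normalization \eqref{eqn:6} to trade $t\dot\varphi_t$ for $-\varphi_t$, recognize $\frac{\dd}{\dd t}\bigl(t\,\hat F_{\omega_0}(\varphi_t)\bigr)=-\bigl(\widetilde I_{\omega_0}(\varphi_t)-\widetilde J_{\omega_0}(\varphi_t)\bigr)$, and integrate, with the boundary term at $t=0$ vanishing since the family extends to $t=0$. No gaps worth noting.
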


\medskip

Furthermore, we have:

\medskip

\begin{yl}
\label{lemma:1}
  Let $\{\varphi_t\}$ be a smooth family of solution of \eqref{eqn:MAKRSt} for $t\in(0,1]$. We have
  \begin{equation*}
    \frac{\dd}{\dd t}\lr{\widetilde{I}_{\omega_0} \lr{ \varphi_t}- \widetilde{J}_{\omega_0} \lr{ \varphi_t}}\geq 0,
  \end{equation*}
  i.e. $\widetilde{I}_{\omega_0} \lr{ \varphi_t}- \widetilde{J}_{\omega_0} \lr{ \varphi_t}$ is nondecreasing with respect to $t$ along $\{\varphi_t\}$.
\end{yl}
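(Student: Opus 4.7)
The plan is to reduce $\frac{\dd}{\dd t}\bigl(\widetilde{I}_{\omega_0}(\varphi_t)-\widetilde{J}_{\omega_0}(\varphi_t)\bigr)$ to a single weighted $L^2$ pairing and then to extract positivity from a Bakry--Emery Lichnerowicz inequality afforded by the semipositivity of $\eta$ and $\omega_0$.

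First, I would differentiate the definitions of $\widetilde{I}_{\omega_0}$ and $\widetilde{J}_{\omega_0}$ along the family $\{\varphi_t\}$. The two contributions involving $\dot{\varphi}_t$ cancel, leaving
\[
\frac{\dd}{\dd t}\lr{\widetilde{I}_{\omega_0}(\varphi_t)-\widetilde{J}_{\omega_0}(\varphi_t)} \;=\; -\frac{1}{V}\int_M \varphi_t\,\frac{\dd}{\dd t}\lr{e^{\theta_X(\omega_{\varphi_t})}\omega_{\varphi_t}^n}.
\]
Using \eqref{eqn:MAKRSt} to identify $e^{\theta_X(\omega_{\varphi_t})}\omega_{\varphi_t}^n = e^{h_{\omega_0}-\beta t\varphi_t}\omega_0^n$ (so by \eqref{eqn:h-btvar=V} the scaled form $\dd\mu_t := V^{-1}e^{h_{\omega_0}-\beta t\varphi_t}\omega_0^n$ is a probability measure), the right-hand side reduces to $\beta\int_M \varphi_t(\varphi_t+t\dot{\varphi}_t)\dd\mu_t$.

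Second, I would linearize \eqref{eqn:MAKRSt} in $t$ to obtain
\[
L_t\dot{\varphi}_t + \beta t\,\dot{\varphi}_t = -\beta\varphi_t,\qquad L_t := \laplace_{\omega_{\varphi_t}} + X,
\]
where the drift Laplacian $L_t$ is self-adjoint on $L^2(M,\dd\mu_t)$. Pairing against $\varphi_t$, integrating with respect to $\dd\mu_t$ and integrating by parts yields $\beta\int\varphi_t^2\dd\mu_t = \int\pr{\grad\varphi_t}{\grad\dot{\varphi}_t}\dd\mu_t - \beta t\int\varphi_t\dot{\varphi}_t\dd\mu_t$, which substituted into the previous step collapses everything to
\[
\frac{\dd}{\dd t}\lr{\widetilde{I}_{\omega_0}(\varphi_t)-\widetilde{J}_{\omega_0}(\varphi_t)} \;=\; \int_M\pr{\grad\varphi_t}{\grad\dot{\varphi}_t}_{\omega_{\varphi_t}}\dd\mu_t.
\]

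The main obstacle is to show this pairing is nonnegative, and this is where \eqref{eqn:KRSt} combined with $\eta\geq 0$, $\omega_0\geq 0$ is used. Equation \eqref{eqn:KRSt} gives the weighted Ricci lower bound $\Ric(\omega_{\varphi_t})-L_X\omega_{\varphi_t}\geq \beta t\,\omega_{\varphi_t}$, and the standard weighted Lichnerowicz inequality for the drift Laplacian then forces the smallest nonzero eigenvalue of $-L_t$ on $L^2(M,\dd\mu_t)$ to be at least $\beta t$. To apply this cleanly I would split $\dot{\varphi}_t = a + \tilde{\dot{\varphi}}_t$ into its $\dd\mu_t$-mean $a=\int\dot{\varphi}_t\dd\mu_t$ and its mean-zero part; the linearization then gives $\varphi_t + ta = -\beta^{-1}(L_t+\beta t)\tilde{\dot{\varphi}}_t$, and one further integration by parts yields
\[
\int_M\pr{\grad\varphi_t}{\grad\dot{\varphi}_t}\dd\mu_t \;=\; \frac{1}{\beta}\int_M(L_t\tilde{\dot{\varphi}}_t)^2\dd\mu_t - t\int_M|\grad\tilde{\dot{\varphi}}_t|^2\dd\mu_t.
\]
Spectral decomposition of the mean-zero function $\tilde{\dot{\varphi}}_t$ combined with $\lambda_1(-L_t)\geq\beta t$ gives $\int(L_t\tilde{\dot{\varphi}}_t)^2\dd\mu_t \geq \beta t\int|\grad\tilde{\dot{\varphi}}_t|^2\dd\mu_t$, which completes the proof. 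The only subtlety is to track the constant component of $\dot{\varphi}_t$ carefully so that the Lichnerowicz inequality is invoked on the subspace orthogonal to constants.
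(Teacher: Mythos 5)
Your proof is correct and is essentially the argument the paper relies on for this lemma (which it states without proof): differentiate, use the linearized equation together with the self-adjointness of $\laplace_{\omega_{\varphi_t}}+X$ with respect to the weighted measure $e^{\theta_X(\omega_{\varphi_t})}\omega_{\varphi_t}^n$, and conclude by the spectral bound $\lambda_1\geq \beta t$, which is exactly the weighted Bochner identity \eqref{eqn:eigenfunction} from the openness proposition applied with $\eta\geq 0$ and $\omega_0>0$. The only point worth making explicit is that self-adjointness of the drift Laplacian and the Bochner computation are used on $\operatorname{Im}X$-invariant functions, which is harmless here since $\varphi_t$ and $\dot{\varphi}_t$ lie in that subspace.
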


\medskip

Before proving the closeness of $S$, we recall two useful estimates which have been applied in \cite{cao2005kahler} and \cite{zhu2000kahler} .

\medskip

\begin{yl}[\cite{cao2005kahler}]
\label{lemma:6}
Let $\varphi\in \mathscr{H}_X\lr{M,\omega_0}$. Suppose that
\begin{equation*}
  Ric\lr{\omega_\varphi}-L_X\omega_{\varphi}\geq\lambda\omega_\varphi,
\end{equation*}
and
\begin{equation*}
\laplace_{\omega_\varphi}\theta_X\lr{\omega_\varphi}\leq k,
\end{equation*}
for some positive number $\lambda$ and $k$. Then, there are uniform constants $C_1$, $C_2$ depending only on $\lambda$ and $k$ such that the Green function $G$ with respect to the operator $\laplace_{\omega_{\varphi}}+X$ is bounded from below by $C_1$ and the following estimate for $\mathop{\sup}_M\lr{-\varphi}$ holds,
\begin{equation*}
  \sup_M\lr{-\varphi}\leq\frac{1}{V}\int_M \lr{ -\varphi} e^{\theta_X\lr{\omega_\varphi}}\omega_\varphi^n+C_2.
\end{equation*}
\end{yl}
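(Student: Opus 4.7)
The plan is to exploit that the drift Laplacian $\tilde{L}:=\laplace_{\omega_\varphi}+X$ is symmetric with respect to the weighted measure $d\mu := V^{-1}e^{\theta_X(\omega_\varphi)}\omega_\varphi^n$, and that the hypothesis $\Ric(\omega_\varphi)-L_X\omega_\varphi\geq\lambda\omega_\varphi$ is exactly the Bakry--\'Emery Ricci lower bound adapted to this weighted geometry. First, using $i_X\omega_\varphi=\im\pb\theta_X(\omega_\varphi)$ and integration by parts, I would verify $\int_M\tilde{L}f\,d\mu=0$ for every smooth $f$; this yields a Green function $G(x,y)$ normalized by $\int_M G(x,\cdot)\,d\mu=0$ and satisfying a reproducing formula of the form
\begin{equation*}
  f(x)-\frac{1}{V}\int_M f\,e^{\theta_X(\omega_\varphi)}\omega_\varphi^n = -\int_M G(x,y)\tilde{L}f(y)\,d\mu(y).
\end{equation*}

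Next I would establish the uniform lower bound $G(x,y)\geq C_1=C_1(\lambda,k)$. The weighted Bochner identity together with the curvature hypothesis produces a weighted Poincar\'e inequality with constant $\lambda$. Combined with $\laplace_{\omega_\varphi}\theta_X(\omega_\varphi)\leq k$ and the normalization $\int e^{\theta_X(\omega_\varphi)}\omega_\varphi^n=V$, Moser iteration applied to $\theta_X(\omega_\varphi)$ itself yields a uniform $L^\infty$ bound depending only on $\lambda,k$. Consequently $d\mu$ is comparable to $V^{-1}\omega_\varphi^n$ up to constants controlled by $\lambda,k$, a uniform weighted Sobolev inequality for $\tilde{L}$ is available, and Moser iteration on the Green kernel bounds $\max\{-G(x,\cdot),0\}$ from above, which after the zero-mean normalization gives the required lower bound. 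This is the procedure carried out in \cite{cao2005kahler}.

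For the supremum estimate, apply the reproducing formula to $f=-\varphi$. Since $\laplace_{\omega_\varphi}\varphi=n-\mathrm{tr}_{\omega_\varphi}\omega_0$ and, by the identity $\theta_X(\omega_\varphi)=\theta_X+X(\varphi)$, $X(\varphi)=\theta_X(\omega_\varphi)-\theta_X$, one gets $\tilde{L}\varphi\leq n+\theta_X(\omega_\varphi)-\theta_X$ because $\mathrm{tr}_{\omega_\varphi}\omega_0\geq 0$. Using $\int_M\tilde{L}\varphi\,d\mu=0$, I would rewrite
\begin{equation*}
  -\varphi(x)-\frac{1}{V}\int_M(-\varphi)e^{\theta_X(\omega_\varphi)}\omega_\varphi^n=\int_M\bigl(G(x,y)-C_1\bigr)\tilde{L}\varphi(y)\,d\mu(y);
\end{equation*}
since $G-C_1\geq 0$ and the already-obtained $L^\infty$ bound on $\theta_X(\omega_\varphi)$ controls $\tilde{L}\varphi$ from above, the right-hand side is bounded by a constant $C_2=C_2(\lambda,k)$, giving the claim.

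The main obstacle is making every constant truly uniform in $\varphi$. Both the weighted Sobolev constant and the $L^\infty$ bound on $\theta_X(\omega_\varphi)$ must depend only on $\lambda$ and $k$. The Sobolev bound follows from Bakry--\'Emery once the weight is controlled, while the $L^\infty$ bound on $\theta_X(\omega_\varphi)$ rests on combining $\laplace_{\omega_\varphi}\theta_X(\omega_\varphi)\leq k$ with the identity $|\grad\theta_X(\omega_\varphi)|_{\omega_\varphi}^2=X(\theta_X(\omega_\varphi))$, the weighted Bochner formula under the Ricci lower bound, and the $L^1$ normalization of $e^{\theta_X(\omega_\varphi)}\omega_\varphi^n$; this is the delicate weighted comparison-geometry ingredient from Cao--Tian--Zhu.
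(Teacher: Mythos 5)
The paper itself offers no proof of this lemma --- it is quoted verbatim from \cite{cao2005kahler} --- so your proposal must stand on its own. Its outer skeleton is the right one and matches the standard Tian--Zhu/Cao--Tian--Zhu argument: self-adjointness of $\Delta_{\omega_\varphi}+X$ with respect to $e^{\theta_X(\omega_\varphi)}\omega_\varphi^n$, the Green representation with the normalization $\int_M G(x,\cdot)\,d\mu=0$, the bound $(\Delta_{\omega_\varphi}+X)\varphi\leq n+X(\varphi)$ from $\operatorname{tr}_{\omega_\varphi}\omega_0>0$, and the trick of replacing $G$ by $G-C_1\geq 0$; granted the lower bound on $G$ and a bound on $X(\varphi)$, that last computation is correct.

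The genuine gap is the middle step, which is also the heart of the lemma. You claim that a uniform $L^\infty$ bound on $\theta_X(\omega_\varphi)$, and with it a uniform weighted Sobolev inequality and the lower bound on $G$, follow from ``Moser iteration applied to $\theta_X(\omega_\varphi)$'' with constants depending only on $\lambda$ and $k$. As stated this is circular: to run Moser iteration on $\Delta_{\omega_\varphi}\theta_X(\omega_\varphi)\leq k$ you need a Sobolev constant for $(M,\omega_\varphi)$ (or for the weighted measure) that is uniform in $\varphi$, and such a constant is exactly the geometric control you are trying to produce. The hypothesis is only a Bakry--\'Emery bound $Ric(\omega_\varphi)-L_X\omega_\varphi\geq\lambda\omega_\varphi$, not a Ricci lower bound for $\omega_\varphi$, and converting it into comparison geometry (Myers, volume comparison, Sobolev, heat kernel) requires an a priori bound on the potential $\theta_X(\omega_\varphi)$ or on $|X|_{\omega_\varphi}=|\nabla\theta_X(\omega_\varphi)|_{\omega_\varphi}$ --- the very quantities in question; note also that the weighted operator gives $(\Delta_{\omega_\varphi}+X)\theta_X(\omega_\varphi)=\Delta_{\omega_\varphi}\theta_X(\omega_\varphi)+|\nabla\theta_X(\omega_\varphi)|^2_{\omega_\varphi}$, which is not bounded above by $k$, and a Poincar\'e inequality with constant $\lambda$ is not a substitute for the Sobolev inequality in Moser's scheme. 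In the literature (and in this paper) the bound on $\theta_X(\omega_\varphi)=\theta_X+X(\varphi)$ is not obtained by elliptic iteration at all: it is Zhu's lemma, Lemma \ref{lemma:3} here, proved by showing the range of $\theta_X(\omega)$ is an invariant of the K\"ahler class, with constants depending on $(M,\omega_0,X)$ rather than on $\lambda,k$; the lower bound of the Green function of $\Delta_{\omega_\varphi}+X$ is then derived from the weighted comparison-geometry/heat-kernel arguments using that uniform bound together with $\Delta_{\omega_\varphi}\theta_X(\omega_\varphi)\leq k$. If you replace the Moser-iteration claim by an appeal to Lemma \ref{lemma:3} (which you also need to bound $X(\varphi)$ in your final display) and then actually carry out the weighted Sobolev/iteration argument for the bound on $G$, the proof closes; as written, the central estimate is asserted rather than proved.
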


\medskip

\begin{yl}[\cite{zhu2000kahler}]
\label{lemma:3}
  For any $\varphi\in\mathscr{H}_X\lr{M,\omega_0}$, there exists a uniform constant $C$ independent of $\varphi$, such that $\left|X\lr{\varphi}\right|\leq C.$
\end{yl}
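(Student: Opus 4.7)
The strategy is to bound $\theta_X(\omega_\varphi)$ uniformly in $C^0$ and then invoke the identity $X(\varphi)=\theta_X(\omega_\varphi)-\theta_X$ recalled just before equation~\eqref{eqn:2}. The uniform pointwise bound will come from a moment-map-type maximum principle: the extrema of the smooth real function $\theta_X(\omega_\varphi)$ will be shown to lie on the zero locus $Z(X)=\{p\in M:X_p=0\}$, where $\theta_X(\omega_\varphi)$ coincides with the fixed function $\theta_X$ and is hence $\varphi$-independent.

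Concretely, at any interior critical point $p$ of $\theta_X(\omega_\varphi)$ one has $d\theta_X(\omega_\varphi)_p=0$; since this function is real, we also get $\bar\partial\theta_X(\omega_\varphi)_p=0$, and the defining relation $i_X\omega_\varphi=\im\,\bar\partial\theta_X(\omega_\varphi)$ together with the nondegeneracy of $\omega_\varphi$ then forces $X_p=0$. So every critical point of $\theta_X(\omega_\varphi)$ lies in $Z(X)$; in particular the maximum and minimum of this smooth function on the compact manifold $M$ are attained on $Z(X)$, which is necessarily nonempty. On the other hand, at any $p\in Z(X)$ the identity $\theta_X(\omega_\varphi)=\theta_X+X(\varphi)$ collapses to $\theta_X(\omega_\varphi)(p)=\theta_X(p)$, since $X(\varphi)(p)=X^i(p)\,\partial_i\varphi(p)=0$.

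Combining the two observations yields
\begin{equation*}
\min_M\theta_X\;\leq\;\min_{Z(X)}\theta_X\;=\;\min_M\theta_X(\omega_\varphi)\;\leq\;\theta_X(\omega_\varphi)\;\leq\;\max_M\theta_X(\omega_\varphi)\;=\;\max_{Z(X)}\theta_X\;\leq\;\max_M\theta_X,
\end{equation*}
so $\|\theta_X(\omega_\varphi)\|_{C^0(M)}\leq\|\theta_X\|_{C^0(M)}$ for every $\varphi\in\mathscr{H}_X(M,\omega_0)$. Therefore
\begin{equation*}
|X(\varphi)|\;=\;|\theta_X(\omega_\varphi)-\theta_X|\;\leq\;2\|\theta_X\|_{C^0(M)}\;=:\;C,
\end{equation*}
which is the desired $\varphi$-independent constant.

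The only slightly subtle step is the identification of critical points of $\theta_X(\omega_\varphi)$ with zeros of $X$, and it rests on $\theta_X(\omega_\varphi)$ being \emph{real}; this is built into the definition recalled in the introduction and is compatible with the standing hypotheses $L_{\operatorname{Im}X}\omega_0=0$ and $\operatorname{Im}X(\varphi)=0$. I do not foresee any additional obstacle.
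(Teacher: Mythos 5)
Your argument is correct and is essentially the standard one: the paper states this lemma without proof, citing Zhu \cite{zhu2000kahler}, and the cited argument is exactly this maximum-principle observation that the extrema of the real potential $\theta_X(\omega_\varphi)=\theta_X+X(\varphi)$ occur at critical points, where $i_X\omega_\varphi=\im\bar{\partial}\theta_X(\omega_\varphi)$ and the nondegeneracy of $\omega_\varphi$ force $X=0$, so that $\theta_X(\omega_\varphi)$ there agrees with the fixed function $\theta_X$. Your use of the identity $\theta_X(\omega_\varphi)=\theta_X+X(\varphi)$ and the realness of $X(\varphi)$ (from $\operatorname{Im}X(\varphi)=0$) matches the paper's setup, so no gap remains.
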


Next, we consider the closeness of $S$.

\medskip

\begin{mt}
\label{mt:2.6}
  Let $\varphi=\varphi_t\lr{t\geq t_0 >0}$ be a solution of \eqref{eqn:MAKRSt} at $t$. Suppose that $\widetilde{\mu}_{\omega_0,\eta}$(or $\widetilde{F}_{\omega_0,\eta}$) is  proper, then, the $C^0$--norm of $\varphi$ is bounded depending only on $X$, $t_0$, the properness and the geometry of $\lr{M,\omega_0}$.
\end{mt}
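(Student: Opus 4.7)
The plan is to bound $\sup_M \varphi_t$ and $\sup_M(-\varphi_t)$ separately. The upper bound is an immediate maximum principle argument: at a point $p_0$ where $\varphi_t$ attains its maximum, $\omega_{\varphi_t}^n(p_0) \leq \omega_0^n(p_0)$, so taking the logarithm of \eqref{eqn:MAKRSt} at $p_0$ and invoking Lemma \ref{lemma:3} to control $|X(\varphi_t)|$ forces $\beta t \varphi_t(p_0) \leq h_{\omega_0}(p_0) - \theta_X(p_0) + C$, hence $\sup_M \varphi_t \leq C/(\beta t_0)$. The lower bound is more delicate and proceeds through a Tian--type argument based on Lemma \ref{lemma:6}.

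To apply Lemma \ref{lemma:6} I first check its hypotheses along the continuity path. Since $\eta \geq 0$ and $\beta(1-t)\omega_0 \geq 0$, equation \eqref{eqn:KRSt} gives $Ric(\omega_{\varphi_t}) - L_X\omega_{\varphi_t} \geq \beta t_0 \omega_{\varphi_t}$, the required positive Ricci lower bound. The companion upper bound $\laplace_{\omega_{\varphi_t}}\theta_X(\omega_{\varphi_t}) \leq k$ is obtained by tracing \eqref{eqn:KRSt} against $\omega_{\varphi_t}$ and combining with the weighted Bochner identity for the Hamiltonian potential, in the spirit of \cite{zhu2000kahler}; this is the technical point I expect to be the main obstacle. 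Lemma \ref{lemma:6} then yields
\[
\sup_M(-\varphi_t) \leq -\frac{1}{V}\int_M \varphi_t\, e^{\theta_X(\omega_{\varphi_t})}\omega_{\varphi_t}^n + C.
\]

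Next I would convert the right-hand side into $\widetilde{I}_{\omega_0}$ and $\widetilde{J}_{\omega_0}$. Using $\theta_X(\omega_{\varphi_t}) = \theta_X + X(\varphi_t)$ together with the definition of $\widetilde{I}_{\omega_0}$,
\[
-\frac{1}{V}\int_M \varphi_t\, e^{\theta_X(\omega_{\varphi_t})}\omega_{\varphi_t}^n = \widetilde{I}_{\omega_0}(\varphi_t) - \frac{1}{V}\int_M \varphi_t\, e^{\theta_X}\omega_0^n.
\]
By Proposition \ref{prop:2.2} and the monotonicity in Lemma \ref{lemma:1}, $\hat{F}_{\omega_0}(\varphi_t) = -\frac{1}{t}\int_0^t (\widetilde{I}_{\omega_0} - \widetilde{J}_{\omega_0})(\varphi_s)\,\dd s \leq 0$, which is equivalent to $\frac{1}{V}\int_M \varphi_t\, e^{\theta_X}\omega_0^n \geq \widetilde{J}_{\omega_0}(\varphi_t)$. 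Combining the two estimates,
\[
\sup_M(-\varphi_t) \leq (\widetilde{I}_{\omega_0} - \widetilde{J}_{\omega_0})(\varphi_t) + C.
\]

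The final step uses properness to control $(\widetilde{I}_{\omega_0} - \widetilde{J}_{\omega_0})(\varphi_t)$. Comparing \eqref{eqn:KRSt} with \eqref{Ricci pote} gives $h_{\omega_{\varphi_t}} - \theta_X - X(\varphi_t) = -\beta(1-t)\varphi_t + c_t$ with $c_t = -\log\bigl(\frac{1}{V}\int_M e^{h_{\omega_0}-\beta\varphi_t}\omega_0^n\bigr)$. Substituting into Lemma \ref{lemma:2} the $c_t$-terms cancel, producing
\[
\widetilde{\mu}_{\omega_0,\eta}(\varphi_t) = \beta\hat{F}_{\omega_0}(\varphi_t) + A + \beta(1-t)\tilde{\varphi}_t,
\]
where $A := \frac{1}{V}\int_M(h_{\omega_0}-\theta_X)e^{\theta_X}\omega_0^n$ is a constant and $\tilde{\varphi}_t := \frac{1}{V}\int_M \varphi_t\, e^{\theta_X+X(\varphi_t)}\omega_{\varphi_t}^n$. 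Since $\hat{F}_{\omega_0}(\varphi_t) \leq 0$ and the sup bound gives $\tilde{\varphi}_t \leq \sup_M\varphi_t \leq C$, we obtain $\widetilde{\mu}_{\omega_0,\eta}(\varphi_t) \leq C$. Analogously, the sup bound together with $\int_M e^{h_{\omega_0}}\omega_0^n = V$ forces $c_t \leq \beta\sup_M\varphi_t \leq C$, so $\widetilde{F}_{\omega_0,\eta}(\varphi_t) = \hat{F}_{\omega_0}(\varphi_t) + c_t/\beta \leq C$. Properness of either functional then forces $\widetilde{J}_{\omega_0}(\varphi_t) \leq C$, and Proposition \ref{prop:1} upgrades this to $(\widetilde{I}_{\omega_0} - \widetilde{J}_{\omega_0})(\varphi_t) \leq C$, completing the $C^0$ estimate.
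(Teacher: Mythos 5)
Your overall strategy (Lemma \ref{lemma:6} for $\sup_M(-\varphi_t)$, reduction to an upper bound on $\widetilde{I}_{\omega_0}-\widetilde{J}_{\omega_0}$, and properness via the identity coming from Lemma \ref{lemma:2} and Proposition \ref{prop:2.2}) is the paper's, but there is a genuine error at the very first step, and it infects the final step. At a maximum point $p_0$ of $\varphi_t$ you have $\omega_{\varphi_t}^n(p_0)\leq\omega_0^n(p_0)$, so taking logarithms in \eqref{eqn:MAKRSt} gives $h_{\omega_0}(p_0)-\beta t\varphi_t(p_0)-\theta_X(p_0)-X(\varphi_t)(p_0)\leq 0$, i.e. $\beta t\,\varphi_t(p_0)\geq h_{\omega_0}(p_0)-\theta_X(p_0)-X(\varphi_t)(p_0)\geq -C$: the maximum principle yields a \emph{lower} bound on $\sup_M\varphi_t$ (and, at a minimum point, an upper bound on $\inf_M\varphi_t$), not the upper bound $\sup_M\varphi_t\leq C/(\beta t_0)$ you claim. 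For this Fano sign of the exponent no a priori bound on $\sup_M\varphi_t$ comes from the maximum principle; it is part of what the $C^0$ estimate must produce, and in the paper it is obtained instead from the Green's function estimate \eqref{eqn:12} for the metric $\widetilde{\omega}$ with $\widetilde{\omega}^n=e^{\theta_X}\omega_0^n$, which bounds $\sup_M\varphi_t$ by the average $\frac{1}{V}\int_M\varphi_t e^{\theta_X}\omega_0^n$, together with the eventual bound on $\widetilde{I}_{\omega_0}(\varphi_t)$.

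Because of this, your last paragraph does not close: both estimates $\tilde{\varphi}_t\leq\sup_M\varphi_t\leq C$ and $c_t\leq\beta\sup_M\varphi_t\leq C$ rest on the unproved sup bound. The repair is exactly the paper's manipulation, which needs no sup bound at all: since $\hat{F}_{\omega_0}=\widetilde{J}_{\omega_0}-\frac{1}{V}\int_M\varphi_t e^{\theta_X}\omega_0^n$ and $\tilde{\varphi}_t=\frac{1}{V}\int_M\varphi_t e^{\theta_X}\omega_0^n-\widetilde{I}_{\omega_0}(\varphi_t)$, one has
\begin{equation*}
\beta\hat{F}_{\omega_0}\lr{\varphi_t}+\beta\lr{1-t}\tilde{\varphi}_t=\beta t\,\hat{F}_{\omega_0}\lr{\varphi_t}+\beta\lr{1-t}\lr{\widetilde{J}_{\omega_0}\lr{\varphi_t}-\widetilde{I}_{\omega_0}\lr{\varphi_t}}\leq 0,
\end{equation*}
by Proposition \ref{prop:2.2} and Proposition \ref{prop:1}, whence $\widetilde{\mu}_{\omega_0,\eta}\lr{\varphi_t}\leq\frac{1}{V}\int_M\lr{h_{\omega_0}-\theta_X}e^{\theta_X}\omega_0^n$ along the path; the case where only $\widetilde{F}_{\omega_0,\eta}$ is proper is then reduced to this one by \eqref{eqn:3}, rather than by bounding $\widetilde{F}_{\omega_0,\eta}\lr{\varphi_t}$ directly through $c_t$. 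Properness then bounds $\widetilde{J}_{\omega_0}$, hence $\widetilde{I}_{\omega_0}$ by Proposition \ref{prop:1}, and combining with Lemma \ref{lemma:6} and \eqref{eqn:12} gives the oscillation and $C^0$ bounds. Finally, note that the hypothesis $\laplace_{\omega_{\varphi_t}}\theta_X\lr{\omega_{\varphi_t}}\leq k$ of Lemma \ref{lemma:6} is only gestured at in your proposal; the paper verifies it by combining $L_XRic\lr{\omega}=-\im\partial\overline{\partial}\laplace_\omega\theta_X\lr{\omega}$, the relation $h_{\omega_{\varphi_t}}=\theta_X\lr{\omega_{\varphi_t}}-\lr{1-t}\beta\varphi_t+c_t'$ from \eqref{eqn:KRSt}, the maximum principle, and Lemma \ref{lemma:3}, and this verification should be carried out rather than assumed.
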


\medskip

\begin{proof}
  Following from Lemma \ref{lemma:2}, we just prove the case when $\widetilde{\mu}_{\omega_0,\eta}$ is proper. To do this we first give an estimate of the oscillation of $\varphi$.

  Let $\theta_X'=\theta_X\lr{\omega_\varphi}=\theta_X+X\lr{\varphi}$. By the equation
  \begin{equation*}
    L_X Ric\lr{\omega}=-\im\partial\overline{\partial}\laplace_\omega\theta_X\lr{\omega},
  \end{equation*}
  and the maximal principle, we have
  \begin{equation*}
    \laplace_{\omega_\varphi}\theta_X'=-\beta\theta_X'-(1-\beta)\theta_X -(1-\beta)X(f_\eta )-X\lr{h_{\omega_{\varphi_t}}}+c_t
  \end{equation*}
  for some constant $c_t$. By using the maximal principle to \eqref{eqn:KRSt}, we have
  \begin{equation*}
    h_{\omega_\varphi}=\theta_X'-\lr{1-t}\beta\varphi+c_t',
  \end{equation*}
  where $c_t'$ is some constant, which implies that
  \begin{equation}
  \label{eqn:9}
    \laplace_{\omega_\varphi}\theta_X'=-\beta\theta_X' - (1-\beta)\theta_X -(1-\beta) X(f_\eta)-X\lr{\theta_X'}+\lr{1-t}\beta X\lr{\varphi}+c_t.
  \end{equation}
  Applying the maximal principle and \eqref{eqn:9}, we get that
  \begin{equation}
  \label{eqn:10}
    c_t\leq \beta||\theta_X'||_{C^0} +(1-\beta)||\theta_X||_{C^0} +(1-\beta)||X(f_\eta)||_{C^0} +\beta\lr{1-t}||X\lr{\varphi}||_{C^0}.
  \end{equation}
  So we have
  \begin{equation*}
  \begin{split}
    \laplace_{\omega_\varphi}\theta_X'&\leq -||X||^2_{\omega_\varphi} + (2+\beta(1-t))||\theta_X||_{C^0} +(2\beta+2\beta(1-t)) ||X(\varphi)||_{C^0}\\
    &\repl{\leq}+2(1-\beta)||X(f_\eta)||_{C^0}\\
    &\leq k
  \end{split}
  \end{equation*}
  for some uniform constant $k$ by \eqref{eqn:9}, \eqref{eqn:10} and Lemma \ref{lemma:1}.

  Furthermore, we have that
  \begin{equation*}
    Ric\lr{\omega_\varphi}-L_X\omega_\varphi=\beta t\omega_\varphi+\beta\lr{1-t}\omega_0+\eta\geq \beta t_0\omega_\varphi.
  \end{equation*}

  By Lemma \ref{lemma:6} with $\lambda=\beta t_0$, we get that
  \begin{equation}
  \label{eqn:11}
    \sup_M\lr{-\varphi}\leq\frac{1}{V}\int_M \lr{ -\varphi} e^{\theta_X'}\omega_\varphi^n+C_1',
  \end{equation}
  for some uniform constant $C_1'$ depending only on $X$, $t_0$.

  On the other hand, by using the Green formula for the Laplace with respect to $\widetilde{\omega}$ satisfying that
  \begin{equation*}
  \widetilde{\omega}^n=e^{\theta_X}\omega_0^n,
  \end{equation*}
  we get that
  \begin{equation}
  \label{eqn:12}
    \sup_M\varphi\leq \frac{1}{V}\int_M\varphi e^{\theta_X}\omega_0^n+C_2'
  \end{equation}
  for some uniform constant $C_2'$(cf. Lemma 5.3 in \cite{tian2000nonlinear}). Hence, combining \eqref{eqn:11} and \eqref{eqn:12}, we have
  \begin{equation*}
    \operatorname{OSC}_M\varphi\leq \frac{1}{V}\int_M\varphi\lr{e^{\theta_X}\omega_0^n-e^{\theta_X'}\omega_\varphi^n }+C_3',
  \end{equation*}
  for some uniform constant $C_3'$.

  Next we show that $$\widetilde{I}_{\omega_0}\lr{\varphi}=\frac{1}{V}\int_M\varphi\lr{e^{\theta_X}\omega_0^n- e^{\theta_X'}\omega_\varphi^n}$$ is uniformly bounded from above under the condition that $\widetilde{\mu}_{\omega_0,\eta}$ is proper.

  By the equation \eqref{eqn:MAKRSt} and the definition of $h_\omega$, we get that
  \begin{equation}
  \label{eqn:13}
  \begin{split}
    h_{\omega_\varphi}&=h_{\omega_0}-\log\frac{\omega_\varphi^n}{\omega_0^n}-\beta\varphi- \log\lr{\frac{1}{V} \int_M e^{h_{\omega_0}-\beta\varphi}\omega_0^n}\\
    &=h_{\omega_0}-\lr{h_{\omega_0}-\beta t\varphi -\theta_X'}-\beta\varphi- \log\lr{\frac{1}{V} \int_M e^{h_{\omega_0}-\beta\varphi}\omega_0^n}\\
    &=\theta_X'-\beta\lr{1-t}\varphi- \log\lr{\frac{1}{V} \int_M e^{h_{\omega_0}-\beta\varphi}\omega_0^n}.
  \end{split}
  \end{equation}
  Proposition \ref{prop:1}, Proposition \ref{prop:2.2} and equation \eqref{eqn:13} imply that
  \begin{align*}
    \widetilde{\mu}_{\omega_0,\eta}\lr{\varphi}&=\beta \widetilde{F}_{\omega_0,\eta}\lr{ \varphi} + \frac{1}{V}\int_M \lr{h_{\omega_0}-\theta_X}e^{\theta_X}\omega_0^n -\frac{1}{V}\int_M \lr{ h_{ \omega_{ \varphi}} - \theta_X '}e^{\theta_X'}\omega_\varphi^n\\
    &=\beta \hat{F}_{\omega_0}\lr{\varphi} +\frac{\beta \lr{1-t}}{V}\int_M \varphi e^{\theta_X'}\omega_\varphi^n + \frac{1}{V} \int_M \lr{h_{\omega_0}-\theta_X }e^{\theta_X}\omega_0^n\\
    &=\beta\lr{1-t}\lr{\widetilde{J}_{\omega_0}\lr{\varphi}-\widetilde{I}_{\omega_0}\lr{\varphi}} +\beta t \hat{F}_{\omega_0}\lr{\varphi} + \frac{1}{V} \int_M \lr{h_{\omega_0}-\theta_X }e^{\theta_X}\omega_0^n\\
    &\leq \frac{1}{V} \int_M \lr{h_{\omega_0}-\theta_X }e^{\theta_X}\omega_0^n\\
    &\leq C_4',
  \end{align*}
  $\widetilde{I}_{\omega_0}\lr{\varphi}$ is uniform bounded from above, since $\widetilde{\mu}_{\omega_0,\eta}$ is proper, and Proposition \ref{prop:1}. So we get the $C^0-$estimate for the twisted K\"{a}hler--Ricci soliton.
\end{proof}

\medskip

By Yau's estimate \cite{yau1978ricci} or Zhu's estimate \cite{zhu2000kahler} for complex Monge--Amp\`{e}re equations, the $C^0$--estimate implies the $C^{2,\alpha}$--estimate and the elliptic Schauder estimates give the higher order estimates. We get the closeness of $S$, and the existence of twisted K\"{a}hler--Ricci soliton with respect to $\eta$.

\begin{zj}
  As a consequence of the argument above and the argument of Bando-Mabuchi, we can get easily that if the modified K-energy $\tilde{\mu}_{\omega,X}$ in \cite{cao2005kahler} is bounded from below, then $R(X)=1$.
\end{zj}

Following the idea of \cite{berndtsson2013brunn}, we consider the uniqueness of the twisted K\"ahler-Ricci soliton with respect to a smooth semipositive $(1,1)$--form $\eta$ which is strictly positive at one point.

\begin{thm}
\label{Theorem:unique twisted}
  If $\eta$ is a smooth semipositive $(1,1)$--form cohomology to $(1-\beta)\omega_0$ and $\eta$ is strictly positive at one point, the solution to \eqref{eqn:MAKRS} is unique.
\end{thm}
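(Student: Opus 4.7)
The plan is to adapt Berndtsson's convexity argument to the twisted, $X$-invariant setting. Given two solutions $\varphi_0,\varphi_1\in\mathscr{H}_X(M,\omega_0)$ to \eqref{eqn:MAKRS}, I would connect them by the weak geodesic $\{\varphi_t\}_{t\in[0,1]}$ in the space of bounded $\omega_0$-psh potentials. Setting $t=-\log|z|$ on an annulus $A\subset\CC^*$, the function $\Phi(x,z)=\varphi_{-\log|z|}(x)$ is the upper envelope solving the homogeneous Monge-Amp\`ere equation $(\pi^*\omega_0+\im\pa\pb\Phi)^{n+1}=0$ with boundary data $\varphi_0,\varphi_1$. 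The $\IM(X)$-invariance of the endpoints is inherited by $\Phi$, and the $C^{1,\bar 1}$ regularity of weak geodesics (Chen, Chu-Tosatti-Weinkove) provides enough smoothness for the integrations below.

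Along this geodesic the twisted Ding functional $\widetilde F_{\omega_0,\eta}(\varphi_t)$ is convex in $t$. Writing $\widetilde F_{\omega_0,\eta}(\varphi)=-\widetilde E_X(\varphi)-\tfrac{1}{\beta}\log\bigl(\tfrac{1}{V}\int_M e^{h_{\omega_0}-\beta\varphi}\omega_0^n\bigr)$ with $\widetilde E_X$ the $X$-modified Monge-Amp\`ere energy encoded in $\widetilde J_{\omega_0}(\varphi)-\tfrac{1}{V}\int_M\varphi\, e^{\theta_X}\omega_0^n$, the energy $\widetilde E_X$ is affine along the geodesic and the log-integral part is convex by Berndtsson's theorem on fiberwise plurisubharmonicity of integrals of the form $\int e^{-\Phi}\omega_0^n$. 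Writing $\eta=(1-\beta)(\omega_0+\im\pa\pb f_\eta)$ and tracking the $(1-\beta)f_\eta$ contribution separately sharpens the estimate to
\begin{equation*}
\tfrac{d^2}{dt^2}\widetilde F_{\omega_0,\eta}(\varphi_t)\;\geq\;\tfrac{n}{V}\int_M \eta\wedge\im\pa\dot\varphi_t\wedge\pb\dot\varphi_t\wedge\omega_{\varphi_t}^{n-1}\;\geq\;0,
\end{equation*}
and the right-hand side is strictly positive unless $\pb\dot\varphi_t\equiv 0$ on the open set $U=\{\eta>0\}$.

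Since $\varphi_0$ and $\varphi_1$ are critical points of $\widetilde F_{\omega_0,\eta}$ (the Euler-Lagrange equation being precisely \eqref{eqn:MAKRS}), the convex function $t\mapsto\widetilde F_{\omega_0,\eta}(\varphi_t)$ has vanishing derivative at both endpoints and is therefore constant. The strict-convexity term must then vanish identically, forcing $\pb\dot\varphi_t=0$ on $U$, so the real function $\dot\varphi_t$ is locally constant there. The equality case in Berndtsson's theorem associates to $\Phi$ a holomorphic vector field $\xi$ on the total space whose flow generates the geodesic; because $\xi$ must annihilate $\eta$ on $U$ where $\eta>0$, $\xi$ vanishes on $U$ and hence vanishes globally by analyticity. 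Thus $\dot\varphi_t=c(t)$ is spatially constant, and differentiating the normalization $\int_M e^{\theta_X(\omega_{\varphi_t})}\omega_{\varphi_t}^n=V$ in $t$ forces $c(t)\equiv 0$, yielding $\varphi_0=\varphi_1$.

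The main obstacle is the limited regularity of weak geodesics, which makes the sharpened second-derivative inequality non-obvious. The standard remedy is to approximate $\Phi$ by smooth $\varepsilon$-geodesics solving $(\pi^*\omega_0+\im\pa\pb\Phi)^{n+1}=\varepsilon\,\pi^*\omega_0^{n+1}$, establish the inequality for each $\varepsilon>0$ via Berndtsson's smooth argument, and pass to the limit using the uniform $C^{1,\bar 1}$ bounds. The $\IM(X)$-invariance throughout keeps the computations compatible with the weights $e^{\theta_X(\omega_{\varphi_t})}$, so no essential new difficulty arises beyond Berndtsson's original framework.
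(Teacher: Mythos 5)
Your argument is essentially the paper's proof: connect the two critical points of $\widetilde{F}_{\omega_0,\eta}$ by an $\IM X$--invariant $C^1$--geodesic, use that the modified energy part $\hat{F}_{\omega_0}$ is affine while the term $-\frac{1}{\beta}\log\bigl(\frac{1}{V}\int_M e^{h_{\omega_0}-\beta\phi_t}\omega_0^n\bigr)$ is convex by Berndtsson, conclude from criticality of both endpoints that everything is affine, and then invoke Berndtsson's equality case (his Theorem 6.2) to get a holomorphic vector field $V_t$ with $i_{V_t}\eta=0$, which vanishes identically because $\eta\geq 0$ is strictly positive at a point, forcing the two solutions to coincide. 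The only difference is your intermediate ``sharpened'' second-derivative bound by $\frac{n}{V}\int_M\eta\wedge\im\partial\dot{\varphi}_t\wedge\overline{\partial}\dot{\varphi}_t\wedge\omega_{\varphi_t}^{n-1}$, which is not needed (and whose precise form for the Ding functional is delicate at this regularity); the paper bypasses it and goes straight to the equality-case theorem.
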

\begin{proof}
  We assume that $\omega_1=\omega_0+\im \partial\overline{\partial}\varphi_1$ and $\omega_2=\omega_0+ \im \partial\overline{\partial}\varphi_2$ are two different solutions to \eqref{eqn:MAKRS}. It is easy to see that $\varphi_1$ and $\varphi_2$ are stationary points of $\widetilde{F}_{\omega_0,\eta}$. According to page 32 of \cite{berndtsson2013brunn}, we assume that $\{\phi_t|0\leq t\leq 1\}$ is a curve of $C^1$--geodesics connecting $\varphi_1$ and $\varphi_2$, and $\operatorname{Im}X(\phi_t)=0$.

  Direct computation shows that $\hat{F}_{\omega_0}(\phi_t)$ is linear with respect to $t$. Theorem 1.1 of \cite{berndtsson2013brunn} shows that $$-\frac{1}{\beta}\log(\frac{1}{V}\int_M e^{h_{\omega_0}-\beta\phi_t}\omega_0^n)$$ is convex with respect to $t$. So is $\widetilde{F}_{\omega_0,\eta}(\phi_t)$. Since $\varphi_1$ and $\varphi_1$ are stationary points of $\widetilde{F}_{\omega_0,\eta}$, we get that $\widetilde{F}_{\omega_0,\eta}(\phi_t)$ is actually linear with respect to $t$. So $-\frac{1}{\beta}\log(\frac{1}{V}\int_M e^{h_{\omega_0}-\beta\phi_t}\omega_0^n)$ is linear in $t$. Theorem 6.2 of \cite{berndtsson2013brunn} implies that there exists a holomorphic vector field $V_t$ on $M$ with flow $F_t$ such that
  \begin{align}
    F_t^*(\omega_0+\im \partial\overline{\partial}\phi_t)&=\omega_0+ \im \partial\overline{\partial} \phi_0 =\omega_1\label{eqn:F_t}\\
    i_{V_t}\eta&=0.
  \end{align}
  However, the semi-positiveness and strict positiveness of $\eta$ at some point implies that $V_t=0$. So $F_t$ is an identity. The equation \eqref{eqn:F_t} shows that $\omega_1=\omega_2$, which is a contradiction.
\end{proof}

\section{A Moser--Trudinger type inequality}
\label{section:4}
Let $\omega_0\in 2\pi c_1(M)$ be a K\"{a}hler form on $M$, satisfying that
\begin{equation*}
  \begin{cases}
    Ric\lr{\omega_0}-L_X\omega_0\geq \lr{\beta-\varepsilon}\omega_0+\eta\\
    \left|X\lr{h_{\omega_0}-\theta_X}\right|\leq \varepsilon c_1
  \end{cases}
\end{equation*}
for some positive number $\varepsilon$ and $c_1$, where $\theta_X=\theta_X\lr{\omega_0}$. Similar to \cite{cao2005kahler}, we define $a\lr{\omega}$ to be the maximal constant such that,
\begin{equation*}
  \forall 0<r<1,\text{ and } x\in M, \int_{B_{r}\lr{x}}e^{\theta_X \lr{ \omega}}\omega^n\geq a\lr{\omega}r^{2n},
\end{equation*}
where $B_{r}\lr{x}$ is the geodesic ball in $M$ of radio $r$ centering $x$ with respect to $\omega$. And
\begin{equation*}
\lambda_1\lr{\omega}=\inf_{\substack{v\in C^\infty\lr{M}\\ \grad_{\omega} v \neq 0}} \frac{\int_M \left| \grad_{\omega} v\right|_{\omega}^2 e^{\theta_X\lr{\omega}}\omega^n}{\int_M v^2 e^{\theta_X\lr{\omega}}\omega^n - \lr{\int_M v e^{\theta_X\lr{\omega}}\omega^n}^2},
\end{equation*}
is the first eigenvalue of the operator $\laplace_\omega+X$. For two K\"{a}hler forms $\omega_1$, $\omega_2\in \mathscr{K}_X\lr{\omega_0}$ satisfying
\begin{equation*}
\frac{1}{2}\omega_2\leq \omega_1\leq 2\omega_2,
\end{equation*}
we have the following estimates
\begin{equation}
\label{eqn:27}
\lambda_1\lr{\omega_1}\geq 2^{-2n-1}e^{-2C\lr{\omega_0,X}}\lambda_1\lr{\omega_2},
\end{equation}
\begin{equation}
\label{eqn:28}
a\lr{\omega_1}\geq 2^{-2n}e^{-C\lr{\omega_0,X}}a\lr{\omega_2},
\end{equation}
where $C\lr{\omega_0,X}$ is the constant appeared in Lemma \ref{lemma:3}. Similar to \cite{cao2005kahler}, \cite{tian1997kahler} and \cite{zhanggeneralized1}, we have the smoothing lemma as follow:

\begin{yl}[Smoothing lemma]
$\omega_1$ is the solution of $$\frac{\partial \omega_t}{\partial t}=-Ric\lr{ \omega_t} +\eta +\beta \omega_t +L_X\omega_t$$ at $t=1$ with initial data $\omega_0$. And $\omega_t = \omega_0 +\im \partial \overline{\partial}u_t$, $u_0=0$. The following inequalities hold:
\begin{enumerate}[1.]
  \item $\left|\left|u_1\right|\right|_{C^0\lr{M}}\leq \frac{e^\beta}{\beta}\left|\left| h_{\omega_0}-\theta_X \right|\right|_{C^0\lr{M}},$
  \item $\left|\left| h_{\omega_1}-\theta_X\lr{\omega_1} \right|\right|_{C^{0,\frac{1}{2}} \lr{\omega_1}} \leq 4 C\lr{n,c_1,a\lr{\omega_1},\lambda_1\lr{\omega_1}}\lr{1+\left|\left| h_{\omega_0}-\theta_X \right|\right|_{C^0\lr{M}}}\varepsilon^{\frac{1}{4\lr{n+1}}},$
\end{enumerate}
where $C\lr{n,c_1,a\lr{\omega_1},\lambda_1\lr{\omega_1}}=e^\beta \lr{1+ \sqrt{\frac{2V\lr{c_1+n}}{a\lr{\omega_1}\lambda_1\lr{\omega_1}}}}$.
\end{yl}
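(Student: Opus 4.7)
The key object is the twisted Ricci potential $\sigma_t:=h_{\omega_t}-\theta_X(\omega_t)$, with $h_{\omega_t}$ normalized by $\int_Me^{h_{\omega_t}}\omega_t^n=V$. Choosing the additive constant in $u_t$ to match, the flow reduces at the potential level to $\pa_tu_t=-\sigma_t$, and differentiating the definition of $\sigma_t$ along the flow (using $L_X\omega_t=\im\pa\pb\theta_X(\omega_t)$ and the Ricci potential identity) yields the heat-type equation
\begin{equation*}
\pa_t\sigma_t=\laplace_{\omega_t}\sigma_t+X(\sigma_t)+\beta\sigma_t.
\end{equation*}
Since $X$ is of type $(1,0)$, the holomorphic derivatives of the real function $\sigma_t$ vanish at any spatial extremum, so $X(\sigma_t)=0$ there; the maximum principle applied to $e^{-\beta t}\sigma_t$ therefore gives $\|\sigma_t\|_{C^0}\leq e^{\beta t}\|h_{\omega_0}-\theta_X\|_{C^0}$. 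Integrating $\pa_tu_t=-\sigma_t$ over $[0,1]$ and using $(e^\beta-1)/\beta\leq e^\beta/\beta$ delivers the first inequality.

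\medskip

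For the second inequality the plan is to convert the two $\varepsilon$-hypotheses into a weighted $L^2$-smallness of $\sigma_0$ and then to smooth parabolically up to $t=1$. The Ricci hypothesis rewrites as $\im\pa\pb\sigma_0\geq-\varepsilon\omega_0$, hence $\laplace_{\omega_0}\sigma_0+n\varepsilon\geq 0$. After shifting $\sigma_0$ so that $\int_M\sigma_0 e^{\theta_X}\omega_0^n=0$, the self-adjointness of $\laplace_{\omega_0}+X$ for the measure $e^{\theta_X}\omega_0^n$ gives, via integration by parts,
\begin{equation*}
\int_M|\grad\sigma_0|_{\omega_0}^2 e^{\theta_X}\omega_0^n=-\int_M\sigma_0\lr{\laplace_{\omega_0}\sigma_0+n\varepsilon}e^{\theta_X}\omega_0^n-\int_M\sigma_0 X(\sigma_0)e^{\theta_X}\omega_0^n.
\end{equation*}
The bound $|X(\sigma_0)|\leq\varepsilon c_1$ controls the second term by $V\varepsilon c_1\|\sigma_0\|_{C^0}$, while the identity $\int_M\laplace_{\omega_0}\sigma_0\,e^{\theta_X}\omega_0^n=-\int_M X(\sigma_0)e^{\theta_X}\omega_0^n$ combined with the nonnegativity of $\laplace_{\omega_0}\sigma_0+n\varepsilon$ controls the first term by $V\varepsilon(n+c_1)\|\sigma_0\|_{C^0}$. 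Altogether,
\begin{equation*}
\int_M|\grad\sigma_0|_{\omega_0}^2 e^{\theta_X}\omega_0^n\leq 2V(c_1+n)\varepsilon\|\sigma_0\|_{C^0},
\end{equation*}
and the Poincar\'e inequality with spectral gap $\lambda_1(\omega_0)$ converts this into a weighted $L^2$-smallness of $\sigma_0$ itself.

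\medskip

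For the smoothing, an energy estimate along the parabolic equation for $\sigma_t$ (which produces the factor $e^\beta$ from the zeroth-order term) propagates the weighted $L^2$-control to $(M,\omega_1)$, using the uniform equivalences \eqref{eqn:27}--\eqref{eqn:28} to transfer the spectral gap and non-collapsing constants between $\omega_0$ and $\omega_1$. A Moser iteration for $\sigma_1$, whose Sobolev inequality has constant controlled by the non-collapsing $a(\omega_1)$ and whose Poincar\'e constant is $\lambda_1(\omega_1)$, then upgrades the weighted $L^2$-smallness to an $L^\infty$-smallness; interpolating against the uniform bound $\|\sigma_1\|_{C^0}\leq e^\beta\|\sigma_0\|_{C^0}$ from part 1 and invoking the Morrey-type embedding $W^{1,p}\hookrightarrow C^{0,1/2}$ at $p=4n$ produces the stated exponent $\varepsilon^{1/(4(n+1))}$. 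The principal technical obstacle is bookkeeping: tracking every constant through the Bochner/Sobolev/Poincar\'e inequalities and the time propagation so that the final estimate fits exactly the form $e^\beta\lr{1+\sqrt{2V(c_1+n)/(a(\omega_1)\lambda_1(\omega_1))}}$ claimed in the lemma.
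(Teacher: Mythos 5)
Your part 1 and the opening integral estimate of part 2 do follow the intended route (the one the paper borrows from Cao--Tian--Zhu, Tian and Zhang--Zhang): the evolution $\partial_t\sigma_t=\laplace_{\omega_t}\sigma_t+X(\sigma_t)+\beta\sigma_t$ for $\sigma_t=h_{\omega_t}-\theta_X(\omega_t)$, the maximum principle (the drift term vanishes at spatial extrema), and the integration-by-parts bound $\int_M|\grad\sigma_0|^2_{\omega_0}e^{\theta_X}\omega_0^n\leq 2V(c_1+n)\varepsilon\|\sigma_0\|_{C^0}$, which is precisely where the constant $2V(c_1+n)$ in the lemma comes from. (Two small caveats: the evolution equation holds only after fixing the additive normalization of $u_t$ consistently with $\int_Me^{h_{\omega_t}}\omega_t^n=V$, which deserves a line; and you cannot simply ``shift'' $\sigma_0$ to have zero weighted mean, since its normalization is already fixed --- the integration by parts does not need the shift, and the mean at $t=1$ is instead controlled because the normalizations force $\sigma_1$ to change sign.)

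The genuine gap is the endgame of part 2. The intended argument converts weighted $L^2$-smallness at $t=1$ into $C^{0,\frac12}$-smallness using two ingredients you never supply: (i) a pointwise gradient bound $\|\grad\sigma_1\|_{C^0(\omega_1)}\leq Ce^{\beta}(1+\|\sigma_0\|_{C^0})$, proved by a Bochner/maximum-principle smoothing estimate along the flow (the Ricci term from Bochner cancels against the evolution of the metric); and (ii) the elementary non-collapsing trick: if $|\sigma_1|=A$ at some point and $|\grad\sigma_1|\leq K$, then $|\sigma_1|\geq A/2$ on a geodesic ball of radius $A/2K$, so $A^{2n+2}\lesssim K^{2n}a(\omega_1)^{-1}\int_M\sigma_1^2e^{\theta_X(\omega_1)}\omega_1^n$, followed by the interpolation $[\sigma_1]_{C^{0,\frac12}}\leq\sqrt{2AK}$; it is exactly this chain that produces the exponent $\varepsilon^{\frac{1}{4(n+1)}}$ and the constant $e^\beta\bigl(1+\sqrt{2V(c_1+n)/(a(\omega_1)\lambda_1(\omega_1))}\bigr)$. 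Your substitutes do not close the argument: Moser iteration needs a Sobolev inequality whose constant is not controlled by $a(\omega_1)$ alone, and at $t=1$ the function $\sigma_1$ satisfies only a parabolic equation whose time derivative is not a priori controlled; the Morrey embedding $W^{1,4n}\hookrightarrow C^{0,\frac12}$ on $(M,\omega_1)$ has a constant depending on the very geometry being estimated, and entering $W^{1,4n}$ would in any case require the pointwise gradient bound you have not proved; and interpolating an $L^\infty$-smallness against the $C^0$ bound from part 1 cannot yield H\"older continuity --- one must trade against a Lipschitz bound. Finally, invoking \eqref{eqn:27}--\eqref{eqn:28} to transfer $\lambda_1$ and $a$ between $\omega_0$ and $\omega_1$ is both unjustified (those inequalities require $\tfrac12\omega_0\leq\omega_1\leq2\omega_0$, which the flow does not provide) and unnecessary, since your own energy estimate propagates the $L^2$ control to $t=1$, where the Poincar\'e and non-collapsing constants are used directly. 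So the missing gradient estimate at $t=1$ is a missing idea, not bookkeeping.
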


Using the smoothing lemma and discussion similar to \cite{cao2005kahler}, \cite{phong2008moser} and \cite{zhanggeneralized1}, we can establish a Moser--Trudinger type inequality for the twisted Ding--functional $\widetilde{F}_{\omega_{TKS},\eta}$ or $K$--energy $\widetilde{\mu}_{\omega_{TKS},\eta}$.
Specially, the Moser-Trudinger inequality here is linear form. And for the readers' convenience, we give the details of the proof:
\begin{thm}
\label{theorem:3}
  Let $\lr{M,\omega_0}$ be a K\"{a}hler manifold and $L_{\lr{\IM X}}\omega_0=0$ where $X$ is a holomorphic vector field on $M$. Assuming  that $\eta\in \lr{1-\beta}\mathscr{K}^0_X\lr{\omega_0}$ is strictly positive at one point. If there exists a twisted K\"{a}hler--Ricci soliton metric $\omega_{TKS}\in \mathscr{K}_X\lr{\omega_0}$, then there exist uniform positive constants $C_1$, $C_2$ depending only on $\eta$, $\beta$ and the geometry of $\lr{M,\omega_{TKS}}$ such that
  \begin{equation*}
    \widetilde{F}_{\omega_{TKS},\eta}\lr{\phi}\geq C_1 J_{\omega_{TKS}}\lr{\phi}-C_2
  \end{equation*}
  for all $\phi\in \mathscr{H}_{X}\lr{M,\omega_{TKS}}$.
\end{thm}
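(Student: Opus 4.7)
The plan is to argue by contradiction, combining the smoothing lemma above with the uniqueness of the twisted K\"ahler--Ricci soliton (Theorem \ref{Theorem:unique twisted}), in the spirit of Phong--Song--Sturm--Weinkove and Cao--Tian--Zhu. It suffices to prove the inequality for $\widetilde{F}_{\omega_{TKS},\eta}$, since the corresponding statement for $\widetilde{\mu}_{\omega_{TKS},\eta}$ then follows from the lower bound \eqref{eqn:3} in Lemma \ref{lemma:2} together with the comparability of $\widetilde{I}-\widetilde{J}$ and $J$ from Proposition \ref{prop:1}. Suppose, for contradiction, that the conclusion fails. Then one can extract sequences $\epsilon_k\downarrow 0$ and $\phi_k\in\mathscr{H}_X(M,\omega_{TKS})$ with $\widetilde{F}_{\omega_{TKS},\eta}(\phi_k)\le \epsilon_k J_{\omega_{TKS}}(\phi_k)-k$. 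After normalizing $\sup_M\phi_k=0$ (which alters $\widetilde{F}$ only by an explicit constant absorbed into the right-hand side), the target is to prove that $J_{\omega_{TKS}}(\phi_k)$ stays bounded, since then $\widetilde{F}(\phi_k)\to-\infty$ along a sequence lying in a compact subset of $\mathscr{H}_X$, contradicting the continuity of $\widetilde{F}$ and $\widetilde{F}_{\omega_{TKS},\eta}(0)=0$.

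\textbf{Producing an approximate soliton.} For each $\phi_k$, I would construct an auxiliary K\"ahler form $\omega_k^\ast=\omega_{TKS}+\im\pa\pb\psi_k\in\mathscr{K}_X(\omega_{TKS})$, staying $J$-close to $\omega_{\phi_k}$ and satisfying the hypotheses of the smoothing lemma with parameter $\delta_k\downarrow 0$, i.e.
\begin{equation*}
\Ric(\omega_k^\ast)-L_X\omega_k^\ast\ge(\beta-\delta_k)\omega_k^\ast+\eta,\qquad |X(h_{\omega_k^\ast}-\theta_X(\omega_k^\ast))|\le \delta_k c_1.
\end{equation*}
The construction proceeds by solving a small perturbation of the twisted soliton equation at $\omega_{\phi_k}$: the smallness of $\widetilde{F}(\phi_k)$, combined with the Jensen-type identity from Lemma \ref{lemma:2}, translates into a correspondingly small defect of $\omega_{\phi_k}$ from the twisted soliton equation, which can then be corrected by a perturbation solvable via Section \ref{section:3}. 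To upgrade the resulting integral control into the pointwise bound on the $X$-derivative required above, one invokes the boundedness of $X(\psi)$ from Lemma \ref{lemma:3} together with the Green's function lower bound for $\laplace_{\omega}+X$ from Lemma \ref{lemma:6}, following the Moser-iteration device of \cite{phong2008moser}.

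\textbf{Smoothing and uniqueness.} Applying the smoothing lemma to $\omega_k^\ast$ produces $\omega_k^{(1)}=\omega_k^\ast+\im\pa\pb u_k^{(1)}$ with $\|u_k^{(1)}\|_{C^0}$ uniformly bounded and $\|h_{\omega_k^{(1)}}-\theta_X(\omega_k^{(1)})\|_{C^{0,1/2}(\omega_k^{(1)})}\to 0$. An implicit function theorem argument at $\omega_k^{(1)}$ — whose linearized operator $\laplace_{\omega_k^{(1)}}+\beta+X$ is invertible thanks to the strict positivity of $\eta$ at one point, exactly as in the openness portion of Section \ref{section:3} — deforms $\omega_k^{(1)}$ into a genuine twisted K\"ahler--Ricci soliton $\omega_k^{(\infty)}$ for all $k$ sufficiently large. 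By Theorem \ref{Theorem:unique twisted}, $\omega_k^{(\infty)}=\omega_{TKS}$, so $\omega_k^{(1)}$, and hence $\omega_k^\ast$, converge to $\omega_{TKS}$ in a topology that controls $J_{\omega_{TKS}}$. Since $\omega_k^\ast$ is $J$-close to $\omega_{\phi_k}$ by construction, $J_{\omega_{TKS}}(\phi_k)$ stays bounded, which closes the contradiction and yields the desired Moser--Trudinger inequality.

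\textbf{Main obstacle.} The chief technical difficulty is the construction in the second paragraph: extracting from the integral smallness of $\widetilde{F}(\phi_k)$ a \emph{pointwise} Ricci lower bound and a pointwise bound on $X(h_{\omega_k^\ast}-\theta_X(\omega_k^\ast))$ with a quantitative rate $\delta_k\to 0$ depending explicitly on $\widetilde{F}(\phi_k)$. Because the target inequality is \emph{linear} in $J_{\omega_{TKS}}$ (and not $\sqrt{J}$ or $J^{\alpha}$ for $\alpha<1$), every step of this bridging argument must retain a linear dependence on $\widetilde{F}(\phi_k)$; this is where the Moser-iteration manipulation of \cite{phong2008moser} interacts most delicately with the presence of the holomorphic vector field $X$ and the twisting form $\eta$, and where the arguments of \cite{cao2005kahler} and \cite{zhanggeneralized1} require genuine modification.
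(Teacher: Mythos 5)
The central step of your argument has a genuine gap. Your contradiction hypothesis gives $\phi_k$ with $\widetilde{F}_{\omega_{TKS},\eta}\lr{\phi_k}\leq \epsilon_k J_{\omega_{TKS}}\lr{\phi_k}-k$; this makes $\widetilde{F}$ very negative relative to $J$, it does not make anything ``small,'' and in particular it provides no mechanism for producing your approximate soliton $\omega_k^{\ast}$. The claim that the behaviour of $\widetilde{F}\lr{\phi_k}$, via the identity of Lemma \ref{lemma:2}, ``translates into a correspondingly small defect of $\omega_{\phi_k}$ from the twisted soliton equation'' is unsupported: a bound (or near-minimality) of an energy functional gives no pointwise Ricci lower bound $\Ric-L_X\omega\geq\lr{\beta-\delta_k}\omega+\eta$ and no pointwise bound on $X\lr{h-\theta_X}$ for an \emph{arbitrary} potential, and no amount of Moser iteration, Lemma \ref{lemma:3} or Lemma \ref{lemma:6} bridges that, because these lemmas presuppose exactly the curvature control you are trying to manufacture. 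There is a second gap at the end: even granting the construction, bounded $J_{\omega_{TKS}}\lr{\phi_k}$ (with $\sup\phi_k=0$) does not place the $\phi_k$ in a subset of $\mathscr{H}_X\lr{M,\omega_{TKS}}$ that is compact for any topology in which $\widetilde{F}$ is continuous, so ``$\widetilde{F}\lr{\phi_k}\to-\infty$ contradicts continuity'' does not close the argument; and a compactness-by-contradiction scheme would in any case not deliver the explicit linear dependence on $J$ with constants depending only on $\beta$, $\eta$, $X$ and $\lr{M,\omega_{TKS}}$.

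The paper avoids this obstacle entirely by reversing the roles: instead of trying to build an approximate soliton near the arbitrary metric $\omega_g=\omega_{TKS}+\im\pa\pb\phi$, it runs the backward continuity family \eqref{eqn:TKRSt} based at $\omega_g$, for which $\varphi_1=-\phi$ is a solution at $t=1$ and which, by the strict positivity of $\eta$ at a point together with the monotonicity of $\widetilde{I}-\widetilde{J}$ (Lemma \ref{lemma:1}), is solvable for all $t\in(0,1]$. Along this path the maximum principle gives $h_{\omega_t}-\theta_X\lr{\omega_t}=-\beta\lr{1-t}\varphi_t+c_t'$, so the metrics $\omega_t$ satisfy the hypotheses \eqref{eqn:23} of the smoothing lemma automatically with $\varepsilon=\beta\lr{1-t}$ --- this is precisely the quantitative input you could not produce. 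The smoothing lemma plus the implicit function theorem at $\omega_{TKS}$ (invertibility of $-\laplace_{\omega_{TKS}}-X-\beta$, again from positivity of $\eta$ at a point, not from the uniqueness Theorem \ref{Theorem:unique twisted}) yields $\varphi_t=\varphi_1-\varphi_t'+u_t+\mu_t$ with controlled $\varphi_t'$, $u_t$, $\mu_t$; combined with $\widetilde{F}_{\omega_{TKS},\eta}\lr{\phi}\geq c\lr{1-t}\widetilde{J}_{\omega_g}\lr{\varphi_t}$ and the $C^0$-estimate \eqref{eqn:32}, a two-case analysis in $t$ first gives $\widetilde{F}\geq C I_{\omega_{TKS}}^{1/(4n+5)}-C'$ and then, through the comparison of $\widetilde{F}_{\omega_g,\eta}\lr{\varphi_t}-\widetilde{F}_{\omega_g,\eta}\lr{\varphi_1}$ with $\lr{1-t}\lr{1+I}$, bootstraps to the linear inequality. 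If you want to salvage your outline, you would have to replace the ``approximate soliton near $\omega_{\phi_k}$'' step by exactly this continuity-path device, at which point the contradiction framework becomes unnecessary.
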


\begin{proof}
  For any $\phi\in\mathscr{H}_X\lr{M,\omega_{TKS}}$, we denote $\omega_g=\omega_{TKS} + \im \partial \overline{\partial}\phi$. We consider the following Monge--Amp\`{e}re equations with parameter $t\in [0,1]$ with the notions $\theta_X=\theta_X\lr{\omega_g}$:
  \begin{equation}
  \label{eqn:TKRSt}
    \begin{cases}
      \frac{\lr{\omega_g+ \im \partial\overline{\partial} \varphi}^n}{\omega_g^n} = h_{\omega_g} - \theta_X - X\lr{\varphi} -\beta t \varphi\\
      \omega_g + \im \partial\overline{\partial} \varphi>0.
    \end{cases}
  \end{equation}
  Clearly, $-\phi$ is a solution of \eqref{eqn:TKRSt} at $t=1$ modulo a constant. Let
  \begin{equation*}
    T=\{t\in[0,1]|\text{ \eqref{eqn:TKRSt} is solvable at }t\}.
  \end{equation*}
  By the assumption that $\eta$ is strictly positive at one point and the fact that $\widetilde{I}_{\omega_g} \lr{\varphi_t} - \widetilde{J}_{\omega_g} \lr{\varphi_t}$ is nondecreasing in $t$, we get that $(0,1]\subset T$, i.e. for any $t\in (0,1]$, there is a solution $\varphi_t$ for \eqref{eqn:TKRSt} at $t$. We assume that $\varphi_t$ is the solution of \eqref{eqn:KRSt} at $t$, with $\varphi_1=-\phi+C$ and denote that $\omega_t=\omega_g+\im \partial \overline{\partial} \varphi_t$. Consequently, the $C^3$--norm of $\varphi_t$ for $t\geq \frac{1}{2}$ is uniformly bounded by $\phi$, $\omega_{TKS}$, $n$, $\eta$, and $X$.

  Since $\omega_t$ is a solution to $$Ric\lr{\omega_t}=\beta t\omega_t +\lr{\beta-\beta t}\omega_g +\eta + L_X\omega_t,$$ the maximal principle implies that
  \begin{equation*}
    h_{\omega_t}-\theta_X\lr{\omega_t}=-\beta\lr{1-t} \varphi_t+c_t',
  \end{equation*}
  where $c_t'$ is determined by
  \begin{equation*}
    \int_M e^{c_t'- \beta\lr{1-t}\varphi_t+ \theta_X\lr{\omega_t}}\omega_t^n=\int_M e^{h_{\omega_t}} \omega_t^n =\int_M \omega_t^n =\int_M e^{\theta_X\lr{\omega_t}}\omega_t^n.
  \end{equation*}
  In particular, we get that $c_t'-\beta\lr{1-t}\varphi_t$ changes sign, which implies that $\left|c_t'\right|\leq \beta\lr{1-t}\left|\left|\varphi_t\right|\right|_{C^{0}\lr{M}}$, and consequently,
  \begin{equation*}
    \left|\left|h_{\omega_t}-\theta_X\lr{\omega_t}\right|\right|_{C^{0}\lr{M}}\leq 2\beta\lr{1-t}\left|\left|\varphi_t\right|\right|_{C^{0}\lr{M}}.
  \end{equation*}
  On the other hand, the K\"{a}hler metric $\omega_t$ satisfies that
  \begin{equation*}
    \begin{cases}
      Ric{\omega_t}-L_X\omega_t=\beta t\omega_t+\eta+\beta\lr{1-t}\omega_g\geq \beta t\omega_t+\eta\\
      \left|X\lr{h_{\omega_t}-\theta_X\lr{\omega_t}}\right| =\beta\lr{1-t}\left|X\lr{\varphi_t}\right|\leq 2\beta\lr{1-t} C\lr{\omega_{TKS},X}.
    \end{cases}
  \end{equation*}
  With $\varepsilon=\beta-\beta t$, $c_1=2C\lr{\omega_{TKS},X}$, we know that $\omega_t$ satisfies that
  \begin{equation}
  \label{eqn:23}
  \begin{cases}
    Ric\lr{\omega_t}-L_X\omega_t\geq \lr{\beta-\varepsilon}\omega_t+\eta\\
    \left|X\lr{h_{\omega_t}-\theta_X\lr{\omega_t}}\right|\leq \varepsilon c_1.
  \end{cases}
\end{equation}
Let $\omega_t$ be the initial metric in the flow we have considered in last section. Applying the smoothing lemma, we obtain a K\"{a}hler form $\omega_t'=\omega_t+\im \partial\overline{\partial}u_t$ satisfying that
\begin{equation*}
  \left|\left|u_t\right|\right|_{C^0\lr{M}}\leq \frac{e^\beta}{\beta}\left| \left| h_{\omega_t} - \theta_X \lr{\omega_t} \right| \right|_{C^{0}\lr{M}} \leq 2e^\beta\lr{1-t}\left|\left|\varphi_t\right|\right|_{C^0\lr{M}}
\end{equation*}
\begin{equation*}
  \begin{split}
    &\repl{\leq}\left|\left|h_{\omega_t'}-\theta_X\lr{\omega_t'}\right|\right|_{C^{0,\frac{1}{2}}\lr{\omega_t'}}\\
    &\leq 4 C\lr{n, c_1, a\lr{\omega_t'}, \lambda_1\lr{\omega_t'}}(1+ \left| \left| h_{\omega_t}-\theta_X\lr{\omega_t} \right| \right|_{C^{0}\lr{M}})\lr{\beta -\beta t}^{\frac{1}{4n+4}}\\
    &\leq 8C\lr{n, c_1, a\lr{\omega_t'}, \lambda_1\lr{\omega_t'}}(1+ \lr{1-t}\left| \left|\varphi_t \right| \right|_{C^{0}\lr{M}})\lr{\beta -\beta t}^{\frac{1}{4n+4}}.
  \end{split}
\end{equation*}

As before, there exists $\varphi_t'$ such that $\omega_{TKS}=\omega_t'+\im \partial \overline{\partial} \varphi_t$ and
\begin{equation*}
  -\log\frac{\omega_{TKS}^n}{\omega_{t}'^n}=-h_{\omega_t'}+\beta\varphi_t'+\theta_X\lr{\omega_t'} +X\lr{\varphi_t'}.
\end{equation*}
It follows from the maximal principle that
\begin{equation}
\label{eqn:29}
  \varphi_t=\varphi_1-\varphi_t'+u_t+\mu_t,
\end{equation}
where $\mu_t$ is a constant. The normalization $\int_Me^{h_{\omega_t'}}\omega_t'^n=\int_M\omega_t'^n$ implies that
\begin{equation*}
  \left|\mu_t\right|\leq2\lr{1+e^\beta}\lr{1-t}\left|\left|\varphi_t\right|\right|_{C^0\lr{M}}.
\end{equation*}

Next, we give an estimate of $\left|\left|\varphi_t'\right|\right|_{C^0\lr{M}}$ while $t$ is sufficiently closed to $1$. We denote the operator $$\Xi:\mathscr{H}_X^{2,\frac{1}{2}}\lr{M,\omega_{TKS}}\times[0,1]\ra C^{0,\frac{1}{2}}\lr{M}$$ by
\begin{equation*}
  \Xi\lr{\varphi',t}=\log\frac{\lr{\omega_{TKS}-\im \partial\overline{\partial} \varphi'}^n}{\omega_{TKS}^n}+h_{\omega_t'}-\beta\varphi'- \theta_X\lr{\omega_t'} -X\lr{\varphi'}.
\end{equation*}
Obviously, $\Xi\lr{0,1}=0$, and the linearization operator of $\Xi$ at $\varphi'=0$ is $$- \laplace_{\omega_{TKS}}-X-\beta,$$ which is invertible under the assumption that $\eta$ is strictly positive at one point. So $\Xi$ is invertible for $t$ sufficiently closed to $1$, i.e. $\exists \sigma>0$ if
\begin{equation}
\label{eqn:25}
  \left|\left|h_{\omega_t'}-\theta_{X}\lr{\omega_t'}\right|\right|_{C^{0, \frac{1}{2}}\lr{\omega_{TKS}}}\leq \sigma
\end{equation}
then,
\begin{equation}
\exists! \varphi_t', \Xi{\varphi_t'}=0,\text{ and }\left|\left| \varphi_t' \right|\right|_{C^{2,\frac{1}{2}} \lr{\omega_{TKS}}}\leq C_0\sigma,
\end{equation}
where $C_0=C\lr{n, c_1, 2^{-2n}e^{-C\lr{X,\omega_{TKS}}}a\lr{\omega_{TKS}}, 2^{-2n-1}e^{-2C\lr{X,\omega_{TKS}}}\lambda_1\lr{\omega_{TKS}}}+1$ with notations that $c_1$ is the constant in \eqref{eqn:23} and $C(X,\omega_{\omega_{TKS}})$ in Lemma \ref{lemma:6}. And we choose $\sigma$ small such that $C_0\sigma<\frac{1}{4}$ and $\frac{1}{\beta}\lr{\frac{\sigma}{32C_0}}^{4n+4}\leq \frac{1}{12e^\beta}$.

For convenience, we do some computation to $\widetilde{F}_{\omega_{TKS},\eta}\lr{\phi}$:
\begin{equation*}
  \begin{split}
    \widetilde{F}_{\omega_{TKS},\eta}\lr{\phi}&= -\widetilde{F}_{\omega_g,\eta} \lr{\varphi_1} =-\hat{F}_{\omega_g}\lr{\varphi_1}\\
    &= \int_0^1\lr{\widetilde{I}_{\omega_g}\lr{\varphi_t} - \widetilde{J}_{\omega_g}\lr{\varphi_t}}\dd t\\
    &\geq c\lr{1-t}\widetilde{J}_{\omega_g}\lr{\varphi_t}.
  \end{split}
\end{equation*}
\begin{equation*}
  \begin{split}
    \widetilde{J}_{\omega_g}\lr{\varphi_t} - \widetilde{J}_{\omega_g}\lr{\varphi_1}&= \int_M \varphi_t e^{\theta_X}\omega_g^n-\int_M \varphi_1e^{\theta_X}\omega_g^n + \hat{F}_{\omega_g}\lr{\varphi_t} - \hat{F}_{\omega_g}\lr{\varphi_1}\\
    &= \int_M \lr{ \varphi_t- \varphi_1} e^{\theta_X}\omega_g^n+ \hat{F}_{\omega_{TKS}}\lr{\varphi_t -\varphi_1}\\
    &=\int_M \lr{ \varphi_t- \varphi_1} \lr{e^{\theta_X} \omega_g^n- e^{\theta_X \lr{\omega_{\varphi_1}}} \omega_{\varphi_1}^n}+ \widetilde{J}_{\omega_{TKS}} \lr{\varphi_t-\varphi_1}\\
    &\geq \int_M \lr{ \varphi_t- \varphi_1} \lr{e^{\theta_X} \omega_g^n- e^{\theta_X \lr{\omega_{\varphi_1}}} \omega_{\varphi_1}^n}\\
    &\geq -\operatorname{OSC}_M\lr{\varphi_t-\varphi_1}.
  \end{split}
\end{equation*}
Hence, $\widetilde{J}_{\omega_g}\lr{\varphi_t}\geq c I_{\omega_g}\lr{\varphi_1} - \operatorname{OSC}_M\lr{\varphi_t-\varphi_1} $. Consequently,
\begin{equation}
\label{eqn:26}
\begin{split}
\widetilde{F}_{\omega_{TKS},\eta} \lr{\phi} &\geq c\lr{1-t} I_{\omega_g}\lr{\varphi_1}- C\lr{1-t} \operatorname{OSC}_M \lr{\varphi_t -\varphi_1}\\
&=c\lr{1-t} I_{\omega_{TKS}}\lr{\phi}- C\lr{1-t}\operatorname{OSC}_M \lr{\varphi_t -\varphi_1}.
\end{split}
\end{equation}

To prove the inequality required, we will discuss case by case.

\textbf{Case1:} Existing $t_0$, such that $\lr{\lr{1-t_0}\beta}^{\frac{1}{4 \lr{n+1}}} \leq \frac{\sigma}{32C_0}$, and
\begin{equation*}
  \forall\text{ }t\in(t_0,1],\text{ }\lr{1-t}\left|\left|\varphi_t\right|\right|_{C^{0}\lr{M}}\lr{\lr{1-t}\beta}^{\frac{1}{4\lr{n+1}}} < \frac{\sigma}{32C_0},
\end{equation*}
\begin{equation*}
  \lr{1-t_0}\left|\left|\varphi_{t_0}\right|\right|_{C^{0}\lr{M}}\lr{\lr{1-t_0}\beta}^{\frac{1}{4\lr{n+1}}} =\frac{\sigma}{32C_0}.
\end{equation*}
Then we claim that $\left|\left|\varphi_t'\right|\right|_{C^{2,\frac{1}{2}}}\lr{\omega_{TKS}}< \frac{1}{4}$ for $t\in(t_0,1]$. If not, existing $t_1 \in (t_0,1]$ such that $\left|\left| \varphi_{t_1}' \right|\right|_{C^{2,\frac{1}{2}}}\lr{\omega_{TKS}}=  \frac{1}{4}$. So we have $\frac{1}{2} \omega_{TKS} \leq \omega_{t_1}' \leq 2 \omega_{TKS}.$ Associated with the inequality above, we get that:
\begin{equation*}
\left|\left|h_{\omega_{t_1}'} -\theta_{X} \lr{\omega_{t_1}'} \right|\right|_{C^{0,\frac{1}{2}} \lr{\omega_{TKS}}}< \sigma
\end{equation*}
But \eqref{eqn:25} implies that $\left|\left| \varphi_{t_1}'\right|\right|_{C^{2,\frac{1}{2}}\lr{\omega_{TKS}}}\leq C_0 \sigma <\frac{1}{4}$, which is a contradiction. The equation \eqref{eqn:29} implies that for all $t\in[t_0,1]$
\begin{equation}
\label{eq:1}
\begin{split}
\left|\left|\varphi_t-\varphi_1\right|\right|_{C^0\lr{M}} &\leq \frac{1}{4}+\left|\left|u_t\right|\right|_{C^0\lr{M}}+\left|\mu_t\right|\\
&\leq \frac{1}{4}+6e^\beta\lr{1-t}\left|\left|\varphi_t\right|\right|_{C^0\lr{M}},
\end{split}
\end{equation}
Since $1-t\leq 1-t_0\leq\frac{1}{12e^\beta}$, then we have that
\begin{equation*}
  \frac{1}{2}\left|\left|\varphi_t\right|\right|_{C^0\lr{M}} - \frac{1}{4} \leq \left|\left|\varphi_1\right|\right|_{C^0\lr{M}} \leq 2 \left|\left| \varphi_t \right|\right|_{C^0\lr{M}}+\frac{1}{4}.
\end{equation*}
Further more, associated with \eqref{eqn:26} we can get that
\begin{equation*}
\label{eqn:31}
\begin{split}
  \widetilde{F}_{\omega_{TKS},\eta}\lr{\phi}&\geq c\lr{1-t} I_{\omega_{TKS}} \lr{\phi}-Ce^\beta(24\left|\left| \varphi_1 \right|\right|_{C^0\lr{M}}- 6 )\lr{1-t}^2 - \frac{C}{2}\lr{1-t}\\
  &\geq c\lr{1-t} I_{\omega_{TKS}} \lr{\phi}-24Ce^\beta \lr{1-t}^2\operatorname{OSC}_M\phi - C\lr{1-t}.
\end{split}
\end{equation*}

In case $\operatorname{OSC}_M\phi\leq \hat{C}\lr{1+I_{\omega_{TKS}}\lr{\phi}}$, we get $\widetilde{F}_{\omega_{TKS},\eta}\lr{\phi}\geq c\lr{1-t}I_{\omega_{TKS}}\lr{\phi}-C$ for some uniform constant $c$ and $C$. Now choosing $t=t_0$, by the definition of $t_0$, we get that $$\lr{1-t_0}^{\frac{4n+5}{4n+4}}I_{\omega_{TKS}}\lr{\phi}\geq \frac{\beta^{4n+4}\sigma}{32C_0}-\lr{\frac{1}{2}+2\hat{C}}\lr{\frac{\beta^{4n+4}\sigma}{32C_0}}^{ \frac{4n+5}{4n+4}}>C\lr{\sigma}>0,$$
therefore,
\begin{equation*}
  \widetilde{F}_{\omega_{TKS},\eta}\lr{\phi}\geq CI_{\omega_{TKS}}^{\frac{1}{4n+5}}\lr{\phi}-C'.
\end{equation*}

By the $C^0$--estimate in last section, we know that there exists a uniform constant $\hat{C}$ depending only on $X$, $t_0$ and $\omega_{TKS}$, such that for all $t\in[t_0,1]$,
\begin{equation}
\label{eqn:32}
\operatorname{OSC}_M\lr{\varphi_t-\varphi_1}\leq \hat{C}\lr{1+I_{\omega_{TKS}}\lr{\varphi_t-\varphi_1}}.
\end{equation}

In particular, let $\varphi'=\varphi_t-\varphi_1$, then we get that $$\widetilde{F}_{\omega_{TKS},\eta}\lr{\varphi'} \geq CI_{\omega_{TKS}}^{\frac{1}{4n+5}} \lr{\varphi'} - C',$$ i.e.
\begin{equation}
\label{eqn:30}
  \widetilde{F}_{\omega_{g},\eta}\lr{\varphi_t}-\widetilde{F}_{\omega_{g},\eta}\lr{\varphi_1} \geq CI_{\omega_{TKS}}^{\frac{1}{4n+5}} \lr{\varphi'} - C'.
\end{equation}
Further more, we have that
\begin{equation*}
\widetilde{F}_{\omega_{g},\eta}\lr{\varphi_t}- \widetilde{F}_{\omega_{g},\eta}\lr{\varphi_1} \leq\lr{1-t}\lr{\lr{\widetilde{I}_{\omega_g} \lr{\varphi_1} -\widetilde{J}_{\omega_g} \lr{\varphi_1}}-\lr{\widetilde{I}_{\omega_g} \lr{\varphi_t} -\widetilde{J}_{\omega_g} \lr{\varphi_t}}}
\end{equation*}

By direct calculation, we get that
\begin{equation*}
\begin{split}
  &\repl{=}\lr{\widetilde{I}_{\omega_g} \lr{\varphi_1} -\widetilde{J}_{\omega_g} \lr{\varphi_1}}-\lr{\widetilde{I}_{\omega_g} \lr{\varphi_t} -\widetilde{J}_{\omega_g} \lr{\varphi_t}}\\
  &=\frac{1}{V}\int_M\varphi_t\lr{e^{\theta_X\lr{\omega_{\varphi_t}}}\omega_{\varphi_t}^n -e^{\theta_X\lr{\omega_{\varphi_1}}}\omega_{\varphi_1}^n} + \widetilde{J}_{\omega_{\varphi_1}}\lr{\varphi_t-\varphi_1}
\end{split}
\end{equation*}
and
\begin{align*}
  &\repl{=}\frac{1}{V}\int_M\varphi_t\lr{e^{\theta_X\lr{\omega_{\varphi_t}}}\omega_{\varphi_t}^n -e^{\theta_X(\omega_{\varphi_1})}\omega_{\varphi_1}^n}\\
  &=\frac{1}{V}\int_0^1\int_M \lr{\varphi_t-\varphi_1}\lr{\laplace_{s,t} +X}\lr{\varphi_t}e^{\theta_X\lr{s\omega_{\varphi_t}+(1-s)\omega_{\varphi_1}}} \omega_{\varphi_1+s\lr{\varphi_t-\varphi_1}}^n\wedge \dd s\\
  &\leq C \operatorname{OSC}_M\lr{\varphi_t-\varphi_1}
\end{align*}
with the notation that $\laplace_{s,t}=\laplace_{\omega_{\varphi_1+s\lr{\varphi_t-\varphi_1}}}$, for some uniform constant $C$ depending only on $X$ and $\omega_{TKS}$. So we get that
\begin{equation*}
  \widetilde{F}_{\omega_{TKS},\eta}\lr{\varphi_t-\varphi_1}\leq C''\lr{1-t}\lr{1+I_{\omega_{TKS}}\lr{\varphi_t-\varphi_1}}.
\end{equation*}
Combining with \eqref{eqn:30}, we get the inequality as follow with $\alpha=\frac{4n+4}{4n+5}$:
\begin{equation*}
  C\frac{I_{\omega_{TKS}} \lr{\varphi_t-\varphi_1}}{1+I_{\omega_{TKS}}^{\alpha} \lr{\varphi_t-\varphi_1}} - C'\leq CI_{\omega_{TKS}}^{1-\alpha} \lr{\varphi_t-\varphi_1} - C'\leq C''\lr{1-t}\lr{1+I_{\omega_{TKS}} \lr{\varphi_t-\varphi_1}},
\end{equation*}
i.e.
\begin{equation}
\label{eqn:35}
  \frac{I_{\omega_{TKS}} \lr{\varphi_t-\varphi_1}}{1+I_{\omega_{TKS}}^{\alpha} \lr{\varphi_t-\varphi_1}} [C-C''\lr{1-t}\lr{1+I_{\omega_{TKS}}^{\alpha} \lr{\varphi_t-\varphi_1}}]\leq C''\lr{1-t}+C',
\end{equation}
for some uniform constants $C$, $C'$ and $C''$ depending only on $X$, $\eta$ and the geometry of $\lr{M,\omega_{TKS}}$. We can suppose that there exists $t'\in[t_0,1]$ such that
\begin{equation}
\label{eqn:34}
C''\lr{1-t'}\lr{1+I_{\omega_{TKS}}^{\alpha} \lr{\varphi_t-\varphi_1}}= \frac{C}{2},
\end{equation}
Then \eqref{eqn:35} and \eqref{eqn:34} implies that $I_{\omega_{TKS}}\lr{\varphi_{t'}-\varphi_1}\leq \widetilde{C}$ and $\lr{1-t'}\geq \hat{C}$, where $\widetilde{C}$ and $\hat{C}$ are two uniform constants depending only on $X$, $\eta$ and the geometry of $\lr{M,\omega_{TKS}}$, and also implies the inequality required by choosing $t=t'$ and combining with \eqref{eqn:26} and \eqref{eqn:32}.

If \eqref{eqn:34} is not true, we must have that $$C''\lr{1-t_0}\lr{1+I_{\omega_{TKS}}^{\alpha} \lr{\varphi_{t_0}-\varphi_1}}<\frac{C}{2},$$ and it follows that $$I_{\omega_{TKS}}\lr{\varphi_{t_0}-\varphi_1}<\widetilde{C}$$ for some uniform constant $\widetilde{C}$ depending only on $X$, $\eta$ and the geometry of $\lr{M,\omega_{TKS}}$. Then choosing $t=t_0$, \eqref{eqn:26} and \eqref{eqn:32} implies the inequality required.

\textbf{Case 2:} For all $t$ satisfying $\lr{\lr{1-t}\beta}^{\frac{1}{4 \lr{n+1}}} \leq \frac{\sigma}{32C_0}$, we have
\begin{equation*}
  \lr{1-t}\left|\left|\varphi_t\right|\right|_{C^{0}\lr{M}}\lr{\lr{1-t}\beta}^{\frac{1}{4\lr{n+1}}} < \frac{\sigma}{32C_0}
\end{equation*}
where $C_0$ is the constant in \eqref{eqn:25}. With $1-t_0=\frac{1}{\beta}\lr{\frac{\sigma}{32C_0}}^{4n+4}$, we have $$\left|\left|\varphi_{t_0}\right|\right|_{C^{0}\lr{M}}<\beta \lr{\frac{\sigma}{32C_0}}^{-4n-4}.$$ Similarly to the discussion to getting \eqref{eq:1}, we get that
\begin{equation}
\label{eqn:36}
\begin{split}
\operatorname{OSC}_M \lr{\varphi_{t_0} -\varphi_1} &\leq \frac{1}{2}+12e^\beta\lr{1-t_0}\left|\left|\varphi_t\right|\right|_{C^0\lr{M}}\\
&\leq C',
\end{split}
\end{equation}
where $C'$ is a uniform constant depending only on $X$, $\eta$ and $\omega_{TKS}$.
Then the inequality required is a consequence of \eqref{eqn:26} and \eqref{eqn:36} while $t=t_0$.
\end{proof}
\begin{zj}
For the theorem above, we have that
\begin{enumerate}[1).]
  \item $\widetilde{\mu}_{\omega_{TKS},\eta}$ is proper is just an easy consequence of the theorem above and \eqref{eqn:3},
  \item The inequality in Theorem \ref{theorem:1} is a consequence of the co--cycle proposition, \eqref{eqn:5} and the theorem above.
\end{enumerate}
\end{zj}

As a consequence of Theorem \ref{theorem:1} and \ref{eqn:5}, we have the following corollary,

\begin{tl}
\label{theorem:2}
  The following conditions are equivalent:
  \begin{enumerate}[(1).]
    \item $0<\beta<R\lr{X}$
    \item $\exists \text{ } \eta\in \lr{1-\beta}\mathscr{K}^0_X\lr{\omega_0}$, there is a twisted K\"{a}hler--Ricci soliton in $\mathscr{K}_X(\omega_0)$ such that
        \begin{equation*}
          Ric(\omega)=\beta\omega+\eta+L_{X}\omega,
        \end{equation*}
    \item $\forall \text{ } \eta\in \lr{1-\beta}\mathscr{K}^0_X\lr{\omega_0}$, there exists a twisted K\"{a}hler--Ricci soliton such that
        \begin{equation*}
          Ric\lr{\omega}=\beta\omega+\eta+L_{X}\omega,
        \end{equation*}
    \item $\forall \eta \in \lr{1-\beta}\mathscr{K}^0_X\lr{\omega_0}$, the twisted Mabuchi $K$--energy $\widetilde{\mu}_{\omega,\eta}$ is $\widetilde{J}$--proper, or the Morse--Trudinger inequality is hold.
  \end{enumerate}
\end{tl}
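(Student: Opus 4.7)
The plan is to close the implication cycle $(1)\Rightarrow(2)\Rightarrow(4)\Rightarrow(3)\Rightarrow(1)$; the remaining $(3)\Rightarrow(2)$ is a tautology.

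The ``easy'' sides of the cycle are $(1)\Rightarrow(2)$, $(4)\Rightarrow(3)$, and $(3)\Rightarrow(1)$. For $(1)\Rightarrow(2)$ I would exploit the definition of $R(X)$ directly: choosing $\beta'\in(\beta,R(X))$ and $\tilde\omega\in\mathscr{K}_X(\omega_0)$ with $\Ric(\tilde\omega)-L_X\tilde\omega\ge\beta'\tilde\omega$, the form $\eta:=\Ric(\tilde\omega)-\beta\tilde\omega-L_X\tilde\omega$ is smooth, $\IM(X)$-invariant, cohomologous to $(1-\beta)\omega_0$, and dominates $(\beta'-\beta)\tilde\omega$, so $\eta\in(1-\beta)\mathscr{K}^0_X(\omega_0)$ and $\tilde\omega$ is by construction a twisted K\"ahler--Ricci soliton for $\eta$. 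For $(4)\Rightarrow(3)$ I would invoke the first half of Theorem \ref{theorem:1}, which only requires semipositivity of the twist. For $(3)\Rightarrow(1)$ I would specialize $\eta:=(1-\beta)\omega_0\in(1-\beta)\mathscr{K}^0_X(\omega_0)$; the resulting soliton $\omega$ satisfies $\Ric(\omega)-L_X\omega=\beta\omega+(1-\beta)\omega_0$, and compactness of $M$ forces $(1-\beta)\omega_0\ge\varepsilon\omega$ for some $\varepsilon>0$, so $R(X)\ge\beta+\varepsilon>\beta$.

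The substantive step is $(2)\Rightarrow(4)$, where the hard part is that Theorem \ref{theorem:1} converts existence into properness only when the twist is strictly positive at some point, while $(2)$ merely furnishes some $\eta_0$ admitting a soliton. The key observation I plan to use is that every $\eta\in(1-\beta)\mathscr{K}^0_X(\omega_0)$ is automatically strictly positive at some point: since $\eta$ is smooth and semipositive we have $\eta^n\ge 0$ pointwise, whereas the cohomological identity $\int_M\eta^n=(1-\beta)^n\int_M\omega_0^n>0$ prevents $\eta^n$ from vanishing identically, forcing $\eta$ to have full rank somewhere. The second half of Theorem \ref{theorem:1} then yields properness of $\widetilde\mu_{\omega_0,\eta_0}$. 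To propagate this to an arbitrary $\eta'\in(1-\beta)\mathscr{K}^0_X(\omega_0)$, I would apply Lemma \ref{lemma:f}: writing $\eta'-\eta_0=\im\partial\pb f$ for smooth $f$, the bound $|\widetilde\mu_{\omega_0,\eta'}-\widetilde\mu_{\omega_0,\eta_0}|\le\operatorname{OSC}(f)$ transfers properness uniformly in $\varphi$. This cohomological positivity observation is the only nontrivial moment; everything else is assembly from Theorem \ref{theorem:1} and Lemma \ref{lemma:f}.
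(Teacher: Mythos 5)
Your proposal is correct, and its working core coincides with the paper's: $(1)\Rightarrow(2)$ from the definition of $R(X)$, $(4)\Rightarrow(3)$ from the existence half of Theorem \ref{theorem:1}, and $(2)\Rightarrow(4)$ from the Moser--Trudinger half (Theorem \ref{theorem:3}) together with the transfer of properness to an arbitrary twist in the class via the $\partial\overline{\partial}$-lemma and Lemma \ref{lemma:f}. Two of your steps, however, genuinely differ from the paper, and both are sharper. First, the paper closes the equivalence by declaring $(2)\Rightarrow(1)$ obvious; but a twisted soliton with a merely semipositive twist only yields $\Ric(\omega)-L_X\omega\geq\beta\omega$, hence $R(X)\geq\beta$, and the strict inequality demanded in (1) requires more. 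Your detour $(3)\Rightarrow(1)$, specializing to the everywhere positive twist $(1-\beta)\omega_0$ and using compactness to obtain $(1-\beta)\omega_0\geq\varepsilon\omega$ and so $R(X)\geq\beta+\varepsilon>\beta$, supplies exactly the missing strictness. Second, in $(2)\Rightarrow(4)$ the paper infers ``strictly positive at one point'' from $\int_M\hat{\eta}\wedge\hat{\omega}^{n-1}>0$, which only shows the trace of $\hat{\eta}$ is positive somewhere (one positive eigenvalue), whereas your argument from $\eta\geq 0$ and $\int_M\eta^n=(1-\beta)^n\int_M\omega_0^n>0$ produces a point where $\eta$ has full rank, which is what the openness argument and Theorem \ref{theorem:3} actually require; so your justification is the one that makes this step airtight. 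Apart from these refinements, the assembly is the same as in the paper.
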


\begin{proof}
We will do this as follow:
  \[\xymatrix{
  (1) & (2) \ar@{=>}[d] \ar@{<=}[r] \ar@{<=>}[l] &    (3)    \\
  &(4) \ar@{=>}[ur]     &                }\]
  (2)$\Rightarrow$(1), and (3)$\Rightarrow$(2) are obvious.
  \begin{itemize}
    \item (1)$\Rightarrow$(2) By the definition of $R\lr{X}$, there exists $\hat{\omega}\in \mathscr{K}_X(\omega_0)$, such that $$\hat{\eta}=Ric(\hat{\omega})-\beta \hat{\omega}-L_X\hat{\omega},$$ and it is easy to check that $\hat{\eta}\in\lr{1-\beta} \mathscr{K}^0_X\lr{\omega_0}$.
    \item (4)$\Rightarrow$(3) This is an easy consequence of Theorem \ref{theorem:1}.
    \item (2)$\Rightarrow$(4) Assuming that $$\hat{\eta}=Ric(\hat{\omega})-\beta \hat{\omega}-L_X\hat{\omega}$$ is a $(1,1)$--form in $(1-\beta)\mathscr{K}^0_X(\omega_0)$ for $\hat{\omega}\in\mathscr{K}_X(\omega_0)$, it is easy to check that $$\int_{M}\hat{\eta} \wedge \hat{\omega}^{n-1}=1-\beta>0,$$ which implies that $\hat{\eta}$ is strictly positive at one point. Theorem \ref{theorem:3} implies that $\widetilde{\mu}_{\widetilde{\omega},\hat{\eta}}$ is proper, and (4) is just a consequence of the proper proposition and Lemma \ref{lemma:f}.
  \end{itemize}
  Then the corollary is proved.
\end{proof}

\begin{zj}
Corollary \ref{corallary:1} is a consequence of the corollary above.
\end{zj}

\section{The existence of conical K\"{a}hler-Ricci soliton}
\label{section:5}
Before proving Theorem \ref{theorem:0.4}, we first prove the existence of conical K\"{a}hler--Ricci soliton under the properness of the log modified Mabuchi $K$--energy or Ding--functional.


In order to make our computations make sense, we will add an assumption to the divisor considered, or more precisely the line bundle of the divisor. Let $D\in|L|$ be a divisor, and $L$ is an ample line bundle over $M$ such that
\begin{equation*}
c_1(L)=\lambda c_1(M).
\end{equation*}
$s$ is a section of $L$ determining $D$ and $H$ is a Hermitian metric of $L$ with curvature $\lambda \omega_0$. Furthermore, we assume that $s$ satisfies that for some constant $C$,
\begin{equation}
\label{eqn:S23}
  |X(\log(|s|_H^2))|<C.
\end{equation}
According to \cite{li2012conical}, we know that $\lambda$ is a rational number in $(0,1)$.

\begin{yl}
  If $|X(\log(|s|_H^2))|<C$, then,
    \begin{equation*}
    \label{eqn:S1}
    \operatorname{Im}X(|s|_H^2)=0.
    \end{equation*}
\end{yl}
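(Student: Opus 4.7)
My strategy is to show that $\operatorname{Im}X(\log|s|_H^2)$ is a bounded pluriharmonic function on $M\setminus D$, extend it across $D$, deduce it is a constant by compactness, and then show the constant must be zero.

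First, I would observe that since $|s|_H^2$ is real, $\operatorname{Im}X(\log|s|_H^2) = \operatorname{Im}(X(\log|s|_H^2))$, which is bounded by the hypothesis. Next, I would establish that this function is pluriharmonic on $M\setminus D$. By the Poincar\'e--Lelong formula applied to the holomorphic section $s$ of $L$ with Hermitian metric $H$ of curvature $\lambda\omega_0$,
\begin{equation*}
\sqrt{-1}\partial\bar\partial\log|s|_H^2 = -\lambda\omega_0 \quad \text{on } M\setminus D.
\end{equation*}
Since $X$ is a holomorphic vector field, the real vector field $\operatorname{Im}X$ preserves $J$, so $L_{\operatorname{Im}X}$ commutes with $\partial\bar\partial$. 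Applying $L_{\operatorname{Im}X}$ to the identity above and invoking the hypothesis $L_{\operatorname{Im}X}\omega_0 = 0$, one gets
\begin{equation*}
\sqrt{-1}\partial\bar\partial\bigl(\operatorname{Im}X(\log|s|_H^2)\bigr) = -\lambda L_{\operatorname{Im}X}\omega_0 = 0 \quad \text{on } M\setminus D.
\end{equation*}

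Next I would extend this function across $D$. Since both $\operatorname{Im}X(\log|s|_H^2)$ and its negative are bounded plurisubharmonic on $M\setminus D$, the classical removable singularity theorem for bounded plurisubharmonic functions (reducing to the punctured-disk case by slicing transversally to $D$) extends each of them to a bounded plurisubharmonic function on $M$. The resulting extension $f$ satisfies $\sqrt{-1}\partial\bar\partial f = 0$ as a current on $M$, so by elliptic regularity $f$ is smooth and pluriharmonic on $M$. Tracing against $\omega_0$ gives $\Delta_{\omega_0}f = 0$, and the maximum principle on the compact manifold $M$ forces $f \equiv \alpha$ for some constant $\alpha \in \mathbb{R}$.

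Finally, to pin down $\alpha = 0$, I would run the flow $\Phi_t$ of $\operatorname{Im}X$, which is defined for all $t\in\mathbb{R}$ by compactness of $M$. For any $p\in M\setminus D$, as long as $\Phi_t(p)\in M\setminus D$, the identity $\tfrac{d}{dt}\log|s|_H^2(\Phi_t(p)) = \alpha$ integrates to $|s|_H^2(\Phi_t(p)) = e^{\alpha t}|s|_H^2(p) > 0$, so the orbit never reaches $D$ and the relation persists for all $t\in\mathbb{R}$. Since $|s|_H^2$ is bounded above on the compact $M$, the inequality $e^{\alpha t}|s|_H^2(p) \leq \max_M|s|_H^2$ for every $t\in\mathbb{R}$ forces $\alpha = 0$. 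Hence $\operatorname{Im}X(\log|s|_H^2) \equiv 0$ on $M\setminus D$, so $\operatorname{Im}X(|s|_H^2) = |s|_H^2\cdot\operatorname{Im}X(\log|s|_H^2) = 0$ there; on $D$ itself, $|s|_H^2$ vanishes to second order, so $d|s|_H^2$ vanishes on $D$, giving $\operatorname{Im}X(|s|_H^2)\big|_D = 0$ as well. Combining the two regions yields $\operatorname{Im}X(|s|_H^2) \equiv 0$ on $M$. The step I expect to be the most delicate is the extension across $D$: one must justify carefully that the removable singularity theorem applies to bounded (pluri)harmonic functions across the smooth divisor $D$, rather than being tempted to conclude constancy only on $M\setminus D$ where compactness is unavailable.
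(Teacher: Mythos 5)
Your proof is correct, but it follows a genuinely different route from the paper. The paper stays on $M\setminus D$ throughout: it applies Simpson's Stokes-type lemma (Lemma 5.2 of the cited Simpson paper) to the form $\eta=u\,\bar\partial u\wedge\omega_0^{n-1}$ with $u=\operatorname{Im}X(\log|s|_H^2)$; since $\partial\bar\partial u\equiv 0$ off $D$ (the same pluriharmonicity you prove via Poincar\'e--Lelong and $L_{\operatorname{Im}X}\omega_0=0$, which the paper uses without spelling out), the vanishing of $\int_{M\setminus D}d\eta$ forces $\bar\partial u\equiv 0$, hence $u\equiv C$ on $M\setminus D$; the constant is then killed at once by evaluating $\operatorname{Im}X(|s|_H^2)=C|s|_H^2$ at an interior maximum point of $|s|_H^2$, where the derivative vanishes but $|s|_H^2>0$. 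You instead extend the bounded pluriharmonic function across $D$ by the removable-singularity theorem for bounded plurisubharmonic functions, conclude constancy from the maximum principle on the compact $M$, and determine the constant by integrating along the flow of $\operatorname{Im}X$ together with the boundedness of $|s|_H^2$. What each buys: your argument avoids invoking Simpson's lemma on the noncompact $M\setminus D$ (and with it the $L^2$ control of $\eta$ near $D$ that the paper asserts but does not verify), at the price of the extension step you correctly flag as delicate; the paper's argument avoids any extension across $D$, and its way of pinning down the constant is shorter than your flow argument, though yours is also valid since the flow of $\operatorname{Im}X$ is complete and preserves $M\setminus D$ along the orbit, as you check.
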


This lemma is an application of a Stokes' formula by C.T.Simpson (cf. Lemma 5.2 of \cite{simpson1988constructing}). We give the details of the proof for the readers' convenience.

\begin{proof}
  Let $\eta=\operatorname{Im}X(\log|s|_H^2) \overline{\partial} \operatorname{Im}X (\log|s|_H^2) \wedge \omega_0^{n-1}$. Obviously, $\int_{M\setminus D}|\eta|^2_{\omega_0}\omega_0^n$ is bounded. Furthermore, $M\setminus D$ satisfies the conditions in Lemma 5.2 of \cite{simpson1988constructing}. Since $\partial \overline{\partial} \operatorname{Im}X(\log|s|_H^2) \equiv 0$ on $M\setminus D$, the Stokes' formula to $\eta$ implies that
  \begin{equation}
    \int_{M\setminus D}|\bar{\partial} \operatorname{Im}X(\log|s|_H^2)|_{\omega_0} \omega_0^n =\int_{M\setminus D}\sqrt{-1}d\eta =0.
  \end{equation}
  So $\operatorname{Im}X(\log|s|_H^2)$ is a constant on $M\setminus D$, i.e.
  \begin{equation}
    \operatorname{Im}X(|s|_H^2)=C|s|_H^2, \text{ on }M\setminus D.
  \end{equation}
  However, $|s|_H^2=0$ on $D$ and $|s|_H^2>0$ in $M\setminus D$, so $|s|_H^2$ must achieve its maximum in the inner of $M\setminus D$. It is easy to see that $C=0$, i.e. $$\operatorname{Im}X(|s|_H^2)=0,$$
  on $M$.
\end{proof}

For $\nu\in [0,1]$, let $h_\omega$ be the Ricci potential define by $$Ric\lr{\omega}=\gamma\lr{\lambda,\nu}\omega+\lambda\nu\omega_0+ \im\partial\overline{\partial}h_\omega,$$ normalized by $\int_M e^{h_\omega}\omega^n=\int_M\omega_0^n$ and $\gamma\lr{\lambda,\nu}=1-\lambda\nu$. For convenience, we define $H$ by multiplying a constant such that $$\int_M \frac{e^{h_{\omega_0}}}{|s|_{H}^{2\nu}} \omega_0^n = \int_M\omega_0^n.$$ We define the log modified Mabuchi $K$--energy and Ding--functional on $\mathscr{H}_X(M,\omega_0)$ as follow:
\begin{equation*}
  \widetilde{\mu}_{\omega_0,\nu D}\lr{\varphi}=\widetilde{\mu}_{\omega_0,\nu\lambda\omega_0}\lr{\varphi} + \frac{\nu}{V}\int_M\log\lr{|s|_H^2}\lr{e^{\theta_X\lr{\omega_\varphi}}\omega_\varphi^n-e^{\theta_X}\omega_0^n},
\end{equation*}
and
\begin{equation*}
  \widetilde{F}_{\omega_0,\nu D}\lr{\varphi}= \hat{F}_{\omega_0}\lr{\varphi}-\frac{1}{\gamma\lr{\lambda,\nu}}\log\lr{\frac{1}{V}\int_M\frac{e^{h_0-\gamma\lr{\lambda, \nu} \varphi}}{|s|^{2\nu}_H}\omega_0^n},
\end{equation*}
with the notation that $h_0=h_{\omega_0}$.
We can check that both of them are well-defined and satisfy the cocycle property. Similarly to Lemma \ref{lemma:2}, we have the following lemma for $\widetilde{\mu}_{\omega_0,\nu D}$ and $\widetilde{F}_{\omega_0,\nu D}$:
\begin{yl}
  \begin{equation}
  \label{eqn:D1}
    \widetilde{\mu}_{\omega_0, \nu D}\geq \gamma\lr{\lambda, \nu} \widetilde{F}_{\omega_0,\nu D}+ \frac{1}{V} \int_M \lr{h_0-\theta_X-\nu\log|s|_H^2}e^{\theta_X}\omega_0^n.
  \end{equation}
\end{yl}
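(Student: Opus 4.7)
The plan is to mirror the proof of Lemma \ref{lemma:2} (inequality \eqref{eqn:3}), with the divisor correction handled by a single application of Jensen's inequality. The first step is to compare the two definitions on the page and write
\begin{equation*}
\widetilde{\mu}_{\omega_0,\nu D}(\varphi) - \gamma(\lambda,\nu)\widetilde{F}_{\omega_0,\nu D}(\varphi) = \bigl[\widetilde{\mu}_{\omega_0,\nu\lambda\omega_0}(\varphi) - \gamma(\lambda,\nu)\widetilde{F}_{\omega_0,\nu\lambda\omega_0}(\varphi)\bigr] + A(\varphi) + B(\varphi),
\end{equation*}
where $A(\varphi) = \frac{\nu}{V}\int_M \log|s|^2_H\bigl(e^{\theta_X(\omega_\varphi)}\omega_\varphi^n - e^{\theta_X}\omega_0^n\bigr)$ is the extra term in $\widetilde\mu_{\omega_0,\nu D}$ and $B(\varphi) = \log\frac{\int_M e^{h_0-\gamma(\lambda,\nu)\varphi}|s|^{-2\nu}_H\omega_0^n}{\int_M e^{h_0-\gamma(\lambda,\nu)\varphi}\omega_0^n}$ comes from the change in the log-Ding factor. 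Applying the \emph{identity} of Lemma \ref{lemma:2} with $\eta = \nu\lambda\omega_0$, $\beta = \gamma(\lambda,\nu)$ (note that $h_{\omega_0}$ in the two settings coincides, since both use $Ric(\omega_0) - \gamma(\lambda,\nu)\omega_0 - \lambda\nu\omega_0 = \sqrt{-1}\partial\bar\partial h_0$), the bracketed term becomes $\frac{1}{V}\int_M(h_0-\theta_X)e^{\theta_X}\omega_0^n - \frac{1}{V}\int_M(h_{\omega_\varphi}-\theta_X(\omega_\varphi))e^{\theta_X(\omega_\varphi)}\omega_\varphi^n$.

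After rearranging, the desired inequality \eqref{eqn:D1} reduces to showing the single estimate
\begin{equation*}
-\frac{1}{V}\int_M\bigl(h_{\omega_\varphi}-\theta_X(\omega_\varphi)\bigr)e^{\theta_X(\omega_\varphi)}\omega_\varphi^n + \frac{\nu}{V}\int_M \log|s|^2_H\, e^{\theta_X(\omega_\varphi)}\omega_\varphi^n + B(\varphi) \ \geq \ 0.
\end{equation*}
The key observation is that the definition of $h_{\omega_\varphi}$ through $Ric(\omega_\varphi) = \gamma(\lambda,\nu)\omega_\varphi + \lambda\nu\omega_0 + \sqrt{-1}\partial\bar\partial h_{\omega_\varphi}$ together with the normalization $\int_M e^{h_{\omega_\varphi}}\omega_\varphi^n = V$ yields the pointwise identity $e^{h_{\omega_\varphi}}\omega_\varphi^n = \frac{V\, e^{h_0 - \gamma(\lambda,\nu)\varphi}\omega_0^n}{\int_M e^{h_0-\gamma(\lambda,\nu)\varphi}\omega_0^n}$, hence
\begin{equation*}
\frac{1}{V}\int_M \frac{e^{h_{\omega_\varphi}}}{|s|^{2\nu}_H}\omega_\varphi^n \ = \ \frac{\int_M e^{h_0-\gamma(\lambda,\nu)\varphi}|s|^{-2\nu}_H\omega_0^n}{\int_M e^{h_0-\gamma(\lambda,\nu)\varphi}\omega_0^n} \ = \ e^{B(\varphi)}.
\end{equation*}

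Now apply Jensen's inequality to the probability measure $d\mu_\varphi = \frac{1}{V}e^{\theta_X(\omega_\varphi)}\omega_\varphi^n$ and the positive function $f = e^{h_{\omega_\varphi} - \theta_X(\omega_\varphi)}/|s|^{2\nu}_H$. Concavity of $\log$ gives
\begin{equation*}
\frac{1}{V}\int_M \bigl(h_{\omega_\varphi}-\theta_X(\omega_\varphi) - \nu\log|s|^2_H\bigr)e^{\theta_X(\omega_\varphi)}\omega_\varphi^n \ \leq \ \log\int_M f\, d\mu_\varphi \ = \ B(\varphi),
\end{equation*}
which is exactly the estimate required. The only nontrivial point is the integrability needed for Jensen's inequality; this is guaranteed because the assumption $\nu\in(0,1)$ together with $D\in |L|$ smooth makes $|s|^{-2\nu}_H$ integrable against any smooth volume form, and $h_{\omega_\varphi}$, $\theta_X(\omega_\varphi)$ are smooth for $\varphi\in\mathscr{H}_X(M,\omega_0)$. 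The rest of the argument is a bookkeeping exercise, so this Jensen step is the only substantive ingredient beyond Lemma \ref{lemma:2}.
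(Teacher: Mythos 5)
Your proposal is correct, and it is essentially the argument the paper has in mind: the paper states this lemma without proof, deferring to Lemma \ref{lemma:2}, whose mechanism is precisely the identity relating $\widetilde{\mu}$ and $\widetilde{F}$ plus a Jensen (concavity of $\log$) step, and your version simply carries the weight $|s|_H^{-2\nu}$ through that step using the pointwise formula $e^{h_{\omega_\varphi}}\omega_\varphi^n = V e^{h_0-\gamma(\lambda,\nu)\varphi}\omega_0^n\big/\int_M e^{h_0-\gamma(\lambda,\nu)\varphi}\omega_0^n$. The bookkeeping (matching of Ricci potentials for $\eta=\lambda\nu\omega_0$, $\beta=\gamma(\lambda,\nu)$, and integrability of $|s|_H^{-2\nu}$ and $\log|s|_H^2$ for $\nu\in(0,1)$) is handled correctly, so no gap remains.
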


The Poincar\`{e}--Lelong formula implies that $[D]=\lambda\omega_0+\im\partial\overline{\partial}\log|s|_H^2$. The positive $\lr{1,1}$--form $\eta_\varepsilon=\lambda\omega_0+ \im\partial\overline{\partial} \log\lr{|s|_H^2+\varepsilon^2}$ converges to $[D]$ as a current while $\varepsilon\rightarrow 0$. With this notation, we have that,
\begin{yl}
  If $\widetilde{\mu}_{\omega_0,\nu D}$ or $\widetilde{F}_{\omega_0, \nu D}$ is proper on the function space $\mathscr{H}_X\lr{M,\omega_0}$, then $\widetilde{\mu}_{\omega_0, \nu \eta_\varepsilon}$ is proper uniformly for all $\varepsilon\in(0,1]$.
\end{yl}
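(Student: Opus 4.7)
The plan is to reduce the two alternative hypotheses to a single one and then to compare $\widetilde{\mu}_{\omega_0,\nu\eta_\varepsilon}$ with $\widetilde{\mu}_{\omega_0,\nu D}$ via an explicit cocycle identity. For the reduction, I would first apply inequality \eqref{eqn:D1}: since $\gamma(\lambda,\nu)>0$ and the remainder term on its right-hand side is a finite constant independent of $\varphi$ (the $-\nu\log|s|_H^2$ factor is integrable against the smooth volume form), properness of $\widetilde{F}_{\omega_0,\nu D}$ forces properness of $\widetilde{\mu}_{\omega_0,\nu D}$. So it suffices to treat the case when $\widetilde{\mu}_{\omega_0,\nu D}$ itself is proper.

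Next, I would apply the cocycle property of the smooth twisted $K$-energy to the twisting forms $\nu\lambda\omega_0$ and $\nu\eta_\varepsilon=\nu\lambda\omega_0+\nu\im\partial\overline{\partial}\log(|s|_H^2+\varepsilon^2)$, obtaining
\[
\widetilde{\mu}_{\omega_0,\nu\eta_\varepsilon}(\varphi) = \widetilde{\mu}_{\omega_0,\nu\lambda\omega_0}(\varphi) + \frac{\nu}{V}\int_M \log(|s|_H^2+\varepsilon^2)\bigl(e^{\theta_X(\omega_\varphi)}\omega_\varphi^n - e^{\theta_X}\omega_0^n\bigr).
\]
Subtracting the defining formula for $\widetilde{\mu}_{\omega_0,\nu D}$ then yields
\[
\widetilde{\mu}_{\omega_0,\nu\eta_\varepsilon}(\varphi) - \widetilde{\mu}_{\omega_0,\nu D}(\varphi) = \frac{\nu}{V}\int_M \log\!\Bigl(1+\tfrac{\varepsilon^2}{|s|_H^2}\Bigr)\bigl(e^{\theta_X(\omega_\varphi)}\omega_\varphi^n - e^{\theta_X}\omega_0^n\bigr).
\]
The factor $\log(1+\varepsilon^2/|s|_H^2)$ is pointwise nonnegative, so the integral against $e^{\theta_X(\omega_\varphi)}\omega_\varphi^n$ is nonnegative; the integral against $e^{\theta_X}\omega_0^n$ is dominated, for $\varepsilon\in(0,1]$, by the $\varepsilon$-independent constant $C_0 := \frac{\nu}{V}\int_M \log(1+|s|_H^{-2})\,e^{\theta_X}\omega_0^n$, which is finite because $\log|s|_H^{-2}$ has only a logarithmic pole along $D$ and hence is integrable against the smooth volume form. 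This gives
\[
\widetilde{\mu}_{\omega_0,\nu\eta_\varepsilon}(\varphi) \geq \widetilde{\mu}_{\omega_0,\nu D}(\varphi) - C_0
\]
uniformly in $\varepsilon\in(0,1]$ and $\varphi\in\mathscr{H}_X(M,\omega_0)$, from which the uniform properness of $\widetilde{\mu}_{\omega_0,\nu\eta_\varepsilon}$ follows at once.

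The main obstacle is justifying the cocycle identity displayed above. Lemma~\ref{lemma:f} gives only an oscillation bound, which is useless here because $\log(|s|_H^2+\varepsilon^2)$ has unbounded oscillation as $\varepsilon\to 0$; what is needed is the exact equality. This follows from differentiating the defining integral for $\widetilde{\mu}_{\omega_0,\eta}$ along a smooth path $\varphi_t$ and noting that when $\eta$ shifts by $\im\partial\overline{\partial}f$, the Ricci potential $h_\omega$ shifts by $-f$ modulo its normalization constant; an integration by parts then rearranges the boundary variation into exactly $V^{-1}\int_M f(e^{\theta_X(\omega_\varphi)}\omega_\varphi^n - e^{\theta_X}\omega_0^n)$. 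Once this identity is in hand (and applied with $f=\nu\log(|s|_H^2+\varepsilon^2)$, which is smooth for each fixed $\varepsilon>0$), the remainder of the argument reduces to the elementary integrability estimate described above.
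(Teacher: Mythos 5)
Your proposal is correct and follows essentially the same route as the paper: reduce to the case that $\widetilde{\mu}_{\omega_0,\nu D}$ is proper via \eqref{eqn:D1}, write $\widetilde{\mu}_{\omega_0,\nu\eta_\varepsilon}(\varphi)-\widetilde{\mu}_{\omega_0,\nu D}(\varphi)=\frac{\nu}{V}\int_M\log\frac{|s|_H^2+\varepsilon^2}{|s|_H^2}\lr{e^{\theta_X(\omega_\varphi)}\omega_\varphi^n-e^{\theta_X}\omega_0^n}$, drop the nonnegative part, and bound the remaining term by an $\varepsilon$-independent constant using the integrability of $\log|s|_H^2$ for $\varepsilon\in(0,1]$. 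The exact difference identity you were at pains to justify is in fact built into the paper's definition of $\widetilde{\mu}_{\omega_0,\nu D}$ (it is $\widetilde{\mu}_{\omega_0,\nu\lambda\omega_0}$ plus the $\log|s|_H^2$ term), so no extra argument beyond the smooth-$f$ computation you describe is needed.
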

\begin{proof}
  Since \eqref{eqn:D1}, we just consider when $\widetilde{\mu}_{\omega_0, \nu D}$ is proper. In order to get the properness of $\widetilde{\mu}_{\omega_0, \nu \eta_\varepsilon}$, we do computation as follow, for all $\varphi\in \mathscr{H}_X\lr{M,\omega_0}$:
  \begin{align*}
      \repl{=}&\widetilde{\mu}_{ \omega_0,\nu \eta_\varepsilon} \lr{\varphi}-\widetilde{\mu}_{\omega_0, \nu D}\lr{\varphi}\\
      =&\frac{\nu}{V}\int_M \log\frac{|s|^2_H+\varepsilon^2}{|s|_H^2}\lr{e^{\theta_X\lr{\omega_\varphi}}\omega_\varphi^n -e^{\theta_{X}\lr{\omega_0}}\omega_0^n}\\
      \geq&\frac{\nu}{V}\int_M \log \frac{|s|_H^2}{|s|^2_H+\varepsilon^2}e^{\theta_X}\omega^n_0\geq-C
,
  \end{align*}
  where $C$ is independent of $\varphi$. The uniform properness of $\widetilde{\mu}_{ \omega_0,\nu \eta_\varepsilon}$ is a consequence of the inequality.
\end{proof}

Before proving Theorem \ref{theorem:existence}, we recall some facts in \cite{guenancia2013conic} which will be useful while getting the Laplace estimate. We denote
$$\omega_\varepsilon=\omega_0+k\im \partial\overline{\partial}\chi \lr{\varepsilon^2 + |s|_H^2},$$
where
$$\chi\lr{\varepsilon^2+t}= \frac{1}{1-\nu}\int_0^t \frac{(\varepsilon^2+r)^{1-\nu} -\varepsilon^{2-2\nu} }{r}\dd r, $$
 and $k$ is a sufficiently small number such that $\omega_\varepsilon$ is a K\"{a}hler form for each $\varepsilon\in\lr{0,1}$. $\omega_\varepsilon\rightarrow \omega^*$ in the sense of current globally on $M$ and in $C_{\text{loc}}^\infty$ topology outside $D$, where $$\omega^*=\omega_0+k\im\partial\overline{\partial} |s|_H^{2-2\nu}$$ is a conical K\"{a}hler metric with cone angle $2\pi\lr{1-\nu}$ along $D$. From \cite{guenancia2013conic}, we know that the function $\chi\lr{\varepsilon^2+t}$ is smooth for any $\varepsilon>0$, and there exist constants $C>0$ and $\gamma>0$ independent of $\varepsilon$ such that
\begin{equation*}
  0\leq \chi\lr{\varepsilon^2+t}\leq C,
\end{equation*}
provided that $t$ belongs to a bounded interval and
\begin{equation}
\label{eqn:comparison of w w0}
  \omega_\varepsilon\geq \gamma\omega_0.
\end{equation}

Now we give the proof of Theorem \ref{theorem:existence} as follow:

\begin{proof}
  In order to solve \eqref{eqn:D2}, we will follow the method of \cite{liu2014conical} to get a uniform global $C^{0,\tau}$--estimate, Laplace estimate and local $C^{k,\alpha}$--estimate ($k\in\mathbb{Z}^+$) for the following equation:
  \begin{equation}
    \label{eqn:D3}
    Ric\lr{\omega_{\phi_{\varepsilon,t}}}=t\gamma\lr{\lambda, \nu} \omega_{\phi_{\varepsilon,t}} + \lr{1-t}\gamma\lr{\lambda,\nu}\omega_0+ \nu \eta_\varepsilon + L_X \omega_{\phi_{\varepsilon,t}},
  \end{equation}
  where $\eta_\varepsilon$ is given above and $t\in[\frac{1}{2},1]$, $\varepsilon\in(0,1)$. With $\omega_{\phi_{\varepsilon,t}} =\omega_\varepsilon+\im \partial\overline{\partial } \varphi_{\varepsilon,t}$ i.e. $$\phi_{\varepsilon,t} =\varphi_{\varepsilon,t}+k\chi\lr{|s|_H^2+\varepsilon^2}$$ the scalar version of \eqref{eqn:D3} is
  \begin{equation}
    \label{eqn:D3scalar}
    \frac{\lr{\omega_\varepsilon+\im \partial\overline{\partial } \varphi_{\varepsilon,t}}^n}{\omega_0^n} = \frac{e^{h_0 - t \gamma \lr{\lambda,\nu } \lr{\varphi_{\varepsilon,t}+k\chi} -\theta_X -X\lr{k\chi+\varphi_{\varepsilon,t}}}}{\lr{|s|_H^2+\varepsilon^2}^\nu}.
  \end{equation}

  \textbf{Step 1: $C^0$ and H\"older estimate}

  To get uniform $C^0$--estimate for the K\"{a}hler potential $k\chi+\varphi_{\varepsilon,t}$, we will follow the proof of Proposition \ref{mt:2.6}. We just need the uniform up bound of $\widetilde{\mu}_{\omega_0,\nu\eta_\varepsilon}$, the uniform properness of Mabuchi $K$--energy $\widetilde{\mu}_{ \omega_0,\nu \eta_\varepsilon}(\phi_{\varepsilon,t})$ which is true according to the lemma above and uniform lower bound of the Green function for the operator $$\laplace_{\omega_{\phi_{\varepsilon,t}}}+X$$ for any $t\in[\frac{1}{2},1]$ and $\varepsilon\in(0,1]$. The uniform up bound of $\widetilde{\mu}_{ \omega_0,\nu \eta_\varepsilon}(\phi_{\varepsilon,t})$ is given by
  \begin{equation*}
  \begin{split}
    \widetilde{\mu}_{ \omega_0,\nu \eta_\varepsilon}(\phi_{\varepsilon,t})&\leq \frac{1}{V}\int_M(h_0 -\nu\log(|s|_H^2+ \varepsilon^2)-\theta_X) \omega_0^n\\
    &\leq\frac{1}{V}\int_M(h_0 -\nu \log(|s|_H^2+ \varepsilon^2)-\theta_X) \omega_0^n.
  \end{split}
  \end{equation*}
  According to Lemma \ref{lemma:6}, we just find two uniform constants $\widetilde{\lambda}$ and $\widetilde{k}$, such that
  \begin{equation*}
    \begin{split}
      Ric\lr{\omega_{\phi_{\varepsilon,t}}}-L_X\omega_{\phi_{\varepsilon,t}}&\geq \widetilde{\lambda} \omega_{\phi_{\varepsilon,t}},\\
      \laplace_{\omega_{\phi_{\varepsilon,t}}}\theta_X\lr{\omega_{\phi_{\varepsilon,t}}} & \leq \widetilde{k}.
    \end{split}
  \end{equation*}
  Since \eqref{eqn:D3} and $\eta_\varepsilon$ and $\omega_0$ are positive, we just choose $$\widetilde{\lambda}= \frac{1}{2} \gamma\lr{\lambda,\nu}. $$ Direct computation shows that
  \begin{equation}
  \label{eqn:D4}
    \begin{split}
      \repl{=}&\laplace_{\omega_{\phi_{\varepsilon,t}}}\theta_X\lr{\omega_{\phi_{\varepsilon,t}}}\\
      =& -\gamma\lr{\lambda,\nu}\theta_X\lr{\omega_{\phi_{\varepsilon,t}}} - \lambda\nu \theta_X -X\lr{\theta_X\lr{\omega_{\phi_{\varepsilon,t}}}}\\
      & - \nu X\lr{\log(|s|_H^2+\varepsilon^2)}+\gamma\lr{\lambda,\nu}\lr{1-t} X\lr{\phi_{\varepsilon,t}} + C_{\varepsilon,t},
    \end{split}
  \end{equation}
  where $C_{\varepsilon,t}$ is a constant. Lemma \ref{lemma:3} and the equation \eqref{eqn:S1} implies that there exists positive constant $C'$ which is just dependent on $\omega_0$, $\lambda$, $\nu$ and $X$, such that
  \begin{equation*}
    |X\lr{\log(|s|_H^2+\varepsilon^2)}|,|X\lr{\phi_{\varepsilon,t}}|\leq C'.
  \end{equation*}
  Applying the maximal principle to the equation \eqref{eqn:D4}, we get that
  \begin{equation*}
    C_{\varepsilon,t}\leq \gamma\lr{\lambda,\nu}\mathop{\operatorname{OSC}}_M |\theta_X \lr{\omega_{\phi_{\varepsilon , t}}}| + C''
  \end{equation*}
  where $C''$ is a uniform constant dependent on $\lambda$, $\nu$, $C'$ and the $C^0$--norm of $\theta_X$. Now we rewrite the equation \eqref{eqn:D4} as follow
  \begin{equation*}
    \begin{split}
      \repl{=}&\laplace_{\omega_{\phi_{\varepsilon,t}}}\theta_X \lr{\omega_{\phi_{\varepsilon,t}}}\\
      =& -\gamma\lr{\lambda,\nu}\theta_X\lr{\omega_{\phi_{\varepsilon,t}}} - \lambda\nu \theta_X -\left|\left|X\right|\right|^2_{\omega_{\phi_{\epsilon,t}}} - \nu X\lr{\log(|s|_H^2+\varepsilon^2)}\\
      &+\gamma\lr{\lambda,\nu}\lr{1-t} X\lr{\phi_{\varepsilon,t}} + C_{\varepsilon,t}\\
      \leq &2\gamma\lr{\lambda,\nu} \mathop{\operatorname{OSC}}_M|\theta_X| +2\gamma\lr{\lambda,\nu}|X\lr{\phi_{\varepsilon,t}}|- \lambda\nu \theta_X\\
      & - \nu X\lr{\log(|s|_H^2+\varepsilon^2)}+\gamma\lr{\lambda,\nu}\lr{1-t} X\lr{\phi_{\varepsilon,t}}+C''\\
      \leq & C ,
    \end{split}
  \end{equation*}
  where $C$ is a uniform constant independent on $\varepsilon$ and $t$, so we just choose $$\widetilde{k}=C,$$ which implies the uniform $C^0$--estimate for the K\"{a}hler potential $\phi_{\varepsilon,t}$. In particular, the $L^p$ norm of the right--hand side of \eqref{eqn:D3scalar} is uniformly bounded for some fix $p>1$. The uniform global H\"older estimate for $\phi_{\varepsilon,t}$ is just an easy consequence of Ko{\l}odziej's work \cite{kolodziej2008holder}.

  \textbf{Step 2: uniform Laplace estimate}

  We want to prove that there exist a uniform positive constant $A$ such that for any $\varepsilon\in (0,1)$ , $\omega_{\phi_{\varepsilon}}$ satisfies that
  \begin{equation}
  \label{eqn:laplace5}
    \frac{1}{A}\omega_\varepsilon\leq \omega_{\phi_{\varepsilon}}\leq A\omega_\varepsilon
  \end{equation}
  with the notation that $\omega_{\phi_\varepsilon}=\omega_{\phi_{\varepsilon,1}}$. Rewrite the equation \eqref{eqn:D3scalar} at $t=1$ as follow:
  \begin{equation*}
    \log\frac{\lr{\omega_\varepsilon+\im\partial\overline{\partial} \varphi_\varepsilon}^n}{\omega_\varepsilon^n} = h_\varepsilon - \gamma\lr{\lambda,\nu}\lr{\varphi_\varepsilon+k\chi}-\theta_X - X\lr{k\chi+\varphi_\varepsilon}
  \end{equation*}
  where
  \begin{equation}
  \label{1}
  h_\varepsilon=h_0- \log[\frac{\omega_\varepsilon^n}{\omega_0^n} \lr{|s|_H^2+\varepsilon^2}^\nu],
  \end{equation}
  and we denote $F_\varepsilon=\log[\frac{\omega_\varepsilon^n}{\omega_0^n} \lr{|s|_H^2+\varepsilon^2}^\nu]$ which is bounded independent of $\varepsilon$ according to \cite{guenancia2013conic}. We also have the notation as follow
   \begin{equation*}
     \laplace'=\laplace_{\omega_{\phi_{\varepsilon}}}, \laplace=\laplace_{\omega_{\varepsilon}}.
   \end{equation*}
  Given $p\in M$, we choose a locally geodesic holomorphic basis $\{w_i\}$, such that
  \begin{equation*}
    g_{i\overline{j}}(p)=\delta_{ij} \text{, and }g_{\phi_\varepsilon i\overline{j}}(p)=\delta_{ij}+\delta_{ij} \varphi_{\varepsilon i\overline{j}},
  \end{equation*}
  where $\varphi_\varepsilon=\varphi_{\varepsilon,1}$, and $g_\varepsilon$, $g_{\phi_{\varepsilon}}$ are the local representation of $\omega_\varepsilon$ and $\omega_{\phi_\varepsilon}$. Direct computation shows that
  \begin{equation}
  \label{eqn:la1}
  \begin{split}
    \repl{=}&\laplace'\lr{\log\operatorname{tr}_{\omega_\varepsilon} \lr{\omega_\varepsilon+\im \partial \overline{\partial} \varphi_{\epsilon}}}\\
    =&\frac{1}{n+\laplace\varphi_{\varepsilon}}[\laplace h_\varepsilon -\gamma\lr{\lambda,\nu} \lr{k\laplace \chi + \laplace \varphi_\varepsilon}-\laplace\theta_X\lr{\omega_{\phi_\varepsilon}}\\
    \repl{=}&+ \frac{\varphi_{\varepsilon j\overline{i}\overline{l}} \varphi_{\varepsilon\overline{j}il}}{(1+\varphi_{\varepsilon i\overline{i}})(1+\varphi_{\varepsilon j\overline{j}})} +\frac{1}{1+\varphi_{\varepsilon i \overline{i}}} R_{\omega_\varepsilon i \overline{i} k \overline{k}}- R_{\omega_\varepsilon i \overline{i} k \overline{k}} \\
    \repl{=}& + \frac{\varphi_{\varepsilon i \overline{i}}}{1+ \varphi_{\varepsilon k \overline{k}}} R_{\omega_\varepsilon i\overline{i}k\overline{k}}] -\frac{1}{\lr{n+\laplace\varphi_{\varepsilon}}^2} \frac{\varphi_{\varepsilon i\overline{i}k}\varphi_{\varepsilon \overline{i}i\overline{k}}}{1+ \varphi_{\varepsilon k \overline{k}}}.
  \end{split}
  \end{equation}
  By \cite{tian2000book},
  \begin{equation}
  \label{eqn:la2}
    \frac{1}{n+\laplace\varphi_\varepsilon}\frac{\varphi_{\varepsilon j\overline{i}\overline{l}} \varphi_{\varepsilon\overline{j}il}}{(1+\varphi_{\varepsilon i\overline{i}})(1+\varphi_{\varepsilon j\overline{j}})} - \frac{1}{\lr{n+\laplace\varphi_{\varepsilon}}^2} \frac{\varphi_{\varepsilon i\overline{i}k}\varphi_{\varepsilon \overline{i}i\overline{k}}}{1+ \varphi_{\varepsilon k \overline{k}}} \geq 0.
  \end{equation}
  On the other hand, we have
  \begin{equation}
  \label{eqn:la3}
  \begin{split}
    \repl{=}&\frac{1}{1+\varphi_{\varepsilon i\overline{i}}} R_{\omega_\varepsilon i\overline{i}k\overline{k}}- R_{\omega_\varepsilon i \overline{i} k \overline{k}} + \frac{\varphi_{\varepsilon i \overline{i}}}{1+ \varphi_{\varepsilon k \overline{k}}} R_{\omega_\varepsilon i\overline{i}k\overline{k}}\\
    =&R_{\omega_\varepsilon i\overline{i}k\overline{k}}\lr{ \frac{1}{1+\varphi_{\varepsilon i\overline{i}}}- 1+ \frac{\varphi_{\varepsilon i \overline{i}}}{1+ \varphi_{\varepsilon k \overline{k}}}}\\
    =&\frac{1}{2}R_{\omega_\varepsilon i\overline{i}k\overline{k}}\lr{\frac{1+\varphi_{\varepsilon i\overline{i}}}{1 + \varphi_{\varepsilon k\overline{k}}}+ \frac{1+\varphi_{\varepsilon k \overline{k}}}{1+\varphi_{\varepsilon i\overline{i}}}-2},
  \end{split}
  \end{equation}
  and
  \begin{equation}
  \label{eqn:la31}
    n=\operatorname{tr}_{\omega_\varepsilon}\omega_0+k\laplace \chi\lr{|s|_H^2+\varepsilon^2}\geq k\laplace \chi\lr{|s|_H^2+\varepsilon^2}.
  \end{equation}
  Further more, by direct computation, the following inequality is true
  \begin{equation}
  \label{eqn:la4}
    \laplace\theta_X\lr{\omega_{\phi_\varepsilon}}\leq \lr{n+\laplace\varphi_\varepsilon}\max\{\sup X^j_{,j}, 0\} + X^j \varphi_{\varepsilon j \overline{i}i} .
  \end{equation}
  Combining the equation \eqref{1}, \eqref{eqn:la1}, \eqref{eqn:la2}, \eqref{eqn:la3},\eqref{eqn:la31} and \eqref{eqn:la4}, we get that
  \begin{equation}
  \label{eqn:laplace1}
    \begin{split}
      \repl{\geq}&\laplace'\lr{\log\operatorname{tr}_{\omega_\varepsilon}\lr{\omega_\varepsilon+\im \partial \overline{\partial} \varphi_{\epsilon}}}\\
      \geq& \frac{\laplace h_0}{n+\laplace \varphi_\varepsilon}-\frac{\laplace F_\varepsilon}{n+\laplace \varphi_\varepsilon}- \gamma\lr{\lambda,\nu} -\max\{\sup X^j_{,j}, 0\} -\frac{X^j \varphi_{\varepsilon j \overline{i}i}}{n+\laplace \varphi_\varepsilon} \\
      \repl{=}&+ \frac{1}{2\lr{n+\laplace \varphi_\varepsilon}}R_{\omega_\varepsilon i\overline{i}k\overline{k}}\lr{\frac{1+\varphi_{\varepsilon i\overline{i}}}{1 + \varphi_{\varepsilon k\overline{k}}}+ \frac{1+\varphi_{\varepsilon k \overline{k}}}{1+\varphi_{\varepsilon i\overline{i}}}-2}.
    \end{split}
  \end{equation}
  There exists a uniform positive constant $C_1$ such that
  \begin{equation*}
    \im \partial\overline{\partial} h_0\geq -C_1\omega_0.
  \end{equation*}
  Combining with \eqref{eqn:comparison of w w0}, we have that
  \begin{equation}
  \label{2}
    -C_1n\gamma^{-1}\leq \laplace h_0 \leq \gamma^{-1}\lr{nC_1 + \laplace_{\omega_0}h_0}.
  \end{equation}
  We denote $\Psi_{\varepsilon,\rho}=\widetilde{C}\chi_{\rho}\lr{|s|_H^2+\varepsilon^2}$, where
  \begin{equation*}
    \chi_{\rho}\lr{|s|_H^2+\varepsilon^2}=\frac{1}{\rho}\int_0^{|s|_H^2}\frac{\lr{\varepsilon^2+r}^\rho - \varepsilon^{2\rho}}{r}\dd r.
  \end{equation*}
  Taking suitable uniform constants $\widetilde{C}$ and $\rho$, \cite{guenancia2013conic} have proved the following inequality
  \begin{equation}
  \label{eqn:laplace2}
  \begin{split}
    \laplace'\Psi_{\varepsilon,\rho}\geq&-\frac{1}{2\lr{n+\laplace \varphi_\varepsilon}}R_{\omega_\varepsilon i\overline{i}k\overline{k}}\lr{\frac{1+\varphi_{\varepsilon i\overline{i}}}{1 + \varphi_{\varepsilon k\overline{k}}}+ \frac{1+\varphi_{\varepsilon k \overline{k}}}{1+\varphi_{\varepsilon i\overline{i}}}-2} - \frac{\laplace F_\varepsilon}{n+\laplace \varphi_\varepsilon} \\
    \repl{=}&-\frac{C_2}{n+\laplace \varphi_\varepsilon}\lr{\frac{1+\varphi_{\varepsilon i\overline{i}}}{1 + \varphi_{\varepsilon k\overline{k}}}+ \frac{1+\varphi_{\varepsilon k \overline{k}}}{1+\varphi_{\varepsilon i\overline{i}}}} -\frac{C_2}{n+\laplace \varphi_\varepsilon} -C_2 \operatorname{tr}_{\omega_{\phi_\varepsilon}}\omega_\varepsilon,
  \end{split}
  \end{equation}
  for some uniform positive constant $C_2$. By \eqref{eqn:laplace1}, \eqref{2} and \eqref{eqn:laplace2}, we get that
  \begin{equation}
    \label{eqn:laplace3}
  \begin{split}
    \repl{=}&\laplace'\lr{\log\lr{n+\laplace \varphi_\varepsilon}+\Psi_{\varepsilon,\rho}}\\
    \geq & -\frac{n^2C_3}{n+\laplace \varphi_\varepsilon} - \frac{C_3}{2} \operatorname{tr}_{\omega_{\phi_\varepsilon}}\omega_\varepsilon -\frac{C_3}{4(n+\laplace \varphi_\varepsilon)} \lr{\frac{1+\varphi_{\varepsilon i\overline{i}}}{1 + \varphi_{\varepsilon k\overline{k}}}+ \frac{1+\varphi_{\varepsilon k \overline{k}}}{1+\varphi_{\varepsilon i\overline{i}}}}\\
    \repl{=}&- \gamma\lr{\lambda,\nu} -\max\{\sup X^j_{,j},0\}- \frac{X^j\varphi_{\varepsilon j\overline{i}i}}{n+\laplace \varphi_\varepsilon}\\
    \geq&-\frac{n^2C_3}{n+\laplace \varphi_\varepsilon} - C_3 \operatorname{tr}_{\omega_{\phi_\varepsilon}}\omega_\varepsilon- \gamma\lr{\lambda,\nu} -\max\{\sup X^j_{,j},0\}- \frac{X^j\varphi_{\varepsilon j\overline{i}i}}{n+\laplace \varphi_\varepsilon}\\
    \geq& -2C_3\operatorname{tr}_{\omega_{\phi_\varepsilon}}\omega_\varepsilon - \gamma\lr{\lambda,\nu} -\max\{\sup X^j_{,j},0\}- \frac{X^j\varphi_{\varepsilon j\overline{i}i}}{n+\laplace \varphi_\varepsilon},
  \end{split}
  \end{equation}
  for some uniform positive constant $C_3$, where we use the inequality $$\operatorname{tr}_{\omega_{\phi_\varepsilon}}  \omega_\varepsilon \cdot \operatorname{tr}_{\omega_\varepsilon} \omega_{\phi_\varepsilon}\geq n^2$$ in the last inequality. Choosing $B=2C_3+1$, we have that
  \begin{equation*}
  \begin{split}
    \repl{=}&\laplace'\lr{\log\lr{n+\laplace \varphi_\varepsilon}+\Psi_{\varepsilon,\rho}-B\varphi_\varepsilon}\\
    \geq& -2C_3\operatorname{tr}_{\omega_{\phi_\varepsilon}}\omega_\varepsilon -B\laplace'\varphi_\varepsilon - \gamma\lr{\lambda,\nu} -\max\{\sup X^j_{,j},0\}- \frac{X^j\varphi_{\varepsilon j\overline{i}i}}{n+\laplace \varphi_\varepsilon}\\
    =&-2C_3\operatorname{tr}_{\omega_{\phi_\varepsilon}}\omega_\varepsilon + B(\operatorname{tr}_{\omega_{\phi_\varepsilon}} \omega_\varepsilon -n) - \gamma\lr{\lambda,\nu} -\max\{\sup X^j_{,j},0\}- \frac{X^j\varphi_{\varepsilon j \overline{i }i}}{n+\laplace \varphi_\varepsilon}\\
    \geq & \operatorname{tr}_{\omega_{\phi_\varepsilon}}\omega_\varepsilon -C_4 - \gamma\lr{\lambda,\nu} -\max\{\sup X^j_{,j},0\}- \frac{X^j\varphi_{\varepsilon j\overline{i}i}}{n+\laplace \varphi_\varepsilon}.
  \end{split}
  \end{equation*}
  We assume that $p\in M$ is the point that $\log\lr{n+\laplace \varphi_\varepsilon}+\Psi_{\varepsilon,\rho}- B\varphi_\varepsilon$ achieve its maximal value. By doing $\frac{\partial}{\partial w^j}$ on the above function, we have that
  \begin{equation}
  \label{eqn:laplace4}
    \varphi_{\varepsilon j \overline{i}i}=\lr{n+\laplace \varphi_\varepsilon}\lr{B\varphi_{\varepsilon j}-\Psi_{\varepsilon, \rho,j}}.
  \end{equation}
  An observation is that
  \begin{equation*}
  \begin{split}
  |X\lr{\varphi_\varepsilon}|&\leq |X\lr{\varphi_\varepsilon+k\chi}|+|X\lr{k\chi}|\leq C_5.
  \end{split}
  \end{equation*}
  Further more, we know that there exists positive number $k'$ independent of $\varepsilon$ such that $\omega_0+k'\im\partial\overline{\partial}\Psi_{\varepsilon,\rho}\geq 0$, Lemma \ref{lemma:3} implies that $X\lr{\Psi_{\varepsilon,\rho}}$ is uniform bounded independent of $\varepsilon$. By multiplying $X^j$ on both sides of \eqref{eqn:laplace4}, we get that
  \begin{equation*}
    \frac{X^j\varphi_{\varepsilon j\overline{i}i}}{n+\laplace \varphi_\varepsilon}\leq C_6
  \end{equation*}
  for some uniform constant $C_6$. On the other hand $X^j_{,j}$ is a uniformly bounded function. The uniform bound of $\Psi_{\varepsilon,\rho}- B\varphi_\varepsilon$ implies that
  \begin{equation*}
    \operatorname{tr}_{\omega_{\phi_\varepsilon}}\omega_\varepsilon\leq C_7
  \end{equation*}
  where $C_7$ is a uniform constant independent of $\varepsilon$. According to \cite{guenancia2013conic}, we know that $F_\varepsilon$ is uniformly bounded w.r.t $\varepsilon$. We have that
  \begin{equation*}
  \begin{split}
    \operatorname{tr}_{\omega_\varepsilon} \omega_{\phi_\varepsilon}&\leq \lr{\operatorname{tr}_{\omega_{\phi_\varepsilon}} \omega_\varepsilon}^{n-1}\frac{\omega_{\phi_\varepsilon}^n}{\omega_\varepsilon}\\
    &\leq C_7 e^{h_0-F_\varepsilon-\gamma\lr{\lambda,\nu} -\theta_X -X\lr{k\chi+\varphi_\varepsilon}}\\
    &\leq C_8.
  \end{split}
  \end{equation*}
  So we prove that there exists a positive uniform constant $A$ independent of $\varepsilon$, such that \eqref{eqn:laplace5} is true.

Similar to \cite{liu2014conical} and \cite{zhu2000kahler}, we can get a uniform local upper bound of $S = |\nabla_{\omega_0} \omega_{ \phi_\varepsilon } |_{\omega_{ \phi_\varepsilon }}$, i.e. $\exists\text{ }C(r)$ independent of $\varepsilon$, such that
\begin{equation*}
S|_{B_p(r)}\leq C(r)
\end{equation*}
where $p\in B_p(r)\subset M\setminus D$. The local uniform higher order estimates is just an easy consequence of elliptic Schauder estimates, i.e. we get a conical metric with some H\"{o}lder function $\phi$ $$\omega_\phi=\omega_{0}+\im\partial\overline{\partial} \phi$$ which is the limit of $\omega_0+\im\partial\overline{\partial} \phi_{\varepsilon}$ globally in the sense of current and locally in the $C^\infty-$topology.

What remains is to check that $\omega_\phi$ satisfies that
\begin{equation}
  \label{CKRS}
  Ric\lr{\omega_\phi}=\gamma\lr{\lambda,\nu} \omega_\phi + \nu [D] +L_{X}\omega_{\phi},
\end{equation}
  globally in the sense of current. Indeed, we have
  \begin{align*}
      &\int_{M} - \im \partial \overline{\partial} \log \frac{\omega_\phi^n|s|_H^{2\nu}}{\omega_0^n}\wedge \zeta\\
      =& \int_{M} - \im \log \frac{\omega_\phi^n|s|_H^{2\nu}}{\omega_0^n} \partial \overline{\partial} \zeta \\
      =&\lim_{\varepsilon\ra 0}\int_{M} - \im \log \frac{\omega_{\phi_\varepsilon}^n \lr{|s|_H^2+\varepsilon^2}^\nu}{\omega_0^n} \partial \overline{\partial}\zeta \\
      =&\lim_{\varepsilon\ra 0}\int_M -\im \lr{h_0 - \gamma \lr{\lambda,\nu } \phi_{\varepsilon} -\theta_X -X\lr{\phi_{\varepsilon}}}\partial\overline{\partial}\zeta\\
      =&\lim_{\varepsilon\ra 0}\int_M -\im\partial\overline{\partial}\lr{h_0 - \gamma \lr{\lambda,\nu } \phi_{\varepsilon} -\theta_X -X\lr{\phi_{\varepsilon}}} \wedge \zeta \\
      =&\lim_{\varepsilon\ra 0}\int_M \lr{\nu\lambda\omega_0 -Ric\lr{\omega_0}+\gamma\lr{\lambda,\nu} \omega_{\phi_\varepsilon} + L_X\omega_{\phi_\varepsilon}}\wedge\zeta\\
      =& \lim_{\varepsilon\ra 0}\int_M \lr{\nu\lambda\omega_0 -Ric\lr{\omega_0}+\gamma\lr{\lambda,\nu} \omega_{\phi_\varepsilon}}\wedge\zeta - \lim_{\varepsilon\ra 0} \int_M \omega_{\phi_\varepsilon}\wedge L_X\zeta\\
      =&  \int_M \lr{\nu\lambda\omega_0 -Ric\lr{\omega_0}+\gamma\lr{\lambda,\nu} \omega_{\phi}}\wedge\zeta - \int_M \omega_{\phi}\wedge L_X\zeta\\
      =&  \int_M \lr{\nu\lambda\omega_0 -Ric\lr{\omega_0}+\gamma\lr{\lambda,\nu} \omega_{\phi}}\wedge\zeta + \int_M L_X\omega_{\phi} \wedge\zeta,
  \end{align*}
  i.e.
  \begin{equation*}
    \int_M Ric\lr{\omega_\phi}\wedge\zeta=\int_M \lr{\gamma\lr{\lambda,\nu}\omega_\phi + \nu [D] +L_X\omega_{\phi}}\wedge \zeta,
  \end{equation*}
  for any test $\lr{n-1,n-1}-$form $\zeta$. So we get that $\omega_\phi$ satisfies the equation \eqref{eqn:D2} globally in the sense of current.
\end{proof}
%
%
%

\section{Some existence results of conical K\"{a}hler--Ricci soliton}
\label{section:6}
In this section we will get some existence results of conical K\"{a}hler--Ricci soliton, according to Theorem \ref{theorem:existence}, i.e. finding some suitable $\lambda$ and $\nu$ such that $\widetilde{\mu}_{\omega_0,\nu D}$ is proper. This is a generalization of \cite{berman2013thermodynamical}, \cite{li2012conical}and \cite{song2012greatest}. We begin this section by proving Theorem \ref{theorem:0.4}.

\begin{zj}
For convenience, we will denote the constant $\tilde{C}$ as the constant $C_2$ appeared in Proposition \ref{prop:1}, which is a positive constant less than $1$.
\end{zj}

\begin{dy}[$\alpha$-invariant]
  \begin{equation*}
  \begin{split}
    \repl{=}&\alpha\lr{[\omega_0],(1-\beta)D}\\
     =&\max\left\{\alpha>0\text{ }|\text{ }\exists\text{ }0<C_\alpha<\infty, \int_Me^{-\alpha\lr{\varphi-\sup \varphi}}\frac{\Omega}{|s|_H^{2-2\beta}}\leq C_\alpha, \forall \varphi\in \mathscr{H}\lr{M,\omega_0}\right\},
  \end{split}
  \end{equation*}
  where $\Omega$ is a smooth volume form.
\end{dy}

According to \cite{berman2013thermodynamical} or \cite{li2012conical}, we have that

\begin{yl}[\cite{berman2013thermodynamical} or \cite{li2012conical}]
If $D\in|L|$, and $c_1(L)=\lambda c_1(M)$, then for $\omega_0\in c_1(M)$,
  \begin{equation*}
    \alpha\lr{[\omega_0],(1-\beta)D}\geq\min\{\lambda \beta,\alpha(\omega_0),\lambda\alpha(L_{|D})\}>0.
  \end{equation*}
\end{yl}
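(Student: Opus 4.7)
The plan is to establish the uniform bound $\int_M e^{-\alpha(\varphi-\sup_M\varphi)}|s|_H^{-(2-2\beta)}\Omega \leq C$ over $\varphi \in \mathscr{H}(M,\omega_0)$ whenever $0<\alpha<\min\{\lambda\beta,\alpha(\omega_0),\lambda\alpha(L_{|D})\}$, following the strategies of Berman and Li. After normalizing $\sup_M\varphi=0$, I would split $M$ into a thin tubular neighborhood $T_\delta=\{|s|_H<\delta\}$ of $D$ and its complement $M\setminus T_\delta$, with each piece controlled by a different constant in the minimum.

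On $M\setminus T_\delta$ the weight $|s|_H^{-(2-2\beta)}$ is bounded by $\delta^{-(2-2\beta)}$, so the integral there is dominated by $\delta^{-(2-2\beta)}\int_M e^{-\alpha\varphi}\Omega$, which is uniformly finite as soon as $\alpha<\alpha(\omega_0)$ by the classical definition of the alpha invariant. On $T_\delta$, I would cover $D$ by finitely many coordinate charts $(z',z^n)$ with $D=\{z^n=0\}$ and $|s|_H^2=e^{-\psi}|z^n|^2$ for a smooth $\psi$, and apply Fubini to reduce each local contribution to
\begin{equation*}
\int_{D\cap U}\left(\int_{|z^n|<\delta} e^{-\alpha\varphi(z',z^n)}|z^n|^{2\beta-2}\,dA(z^n)\right)dV_D(z').
\end{equation*}
For fixed $z'$ the map $z^n\mapsto\varphi(z',z^n)$ is quasi-subharmonic on the transverse disc, while the explicit weight $|z^n|^{2\beta-2}$ is $L^p$-integrable exactly when $p(1-\beta)<1$. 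Hölder's inequality together with the constraint $\alpha<\lambda\beta$ (which selects the admissible Hölder exponent) then bounds the inner slice integral by a constant times $e^{-(\alpha/\lambda)\varphi(z',0)}$, up to a smooth correction coming from $\psi$.

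The tangential integration $\int_D e^{-(\alpha/\lambda)\varphi|_D}dV_D$ is then controlled by the alpha invariant on $D$: since $c_1(L)=\lambda c_1(M)$ one has $\omega_0|_D\in\lambda^{-1}c_1(L_{|D})$, so rescaling gives $\alpha(\omega_0|_D)=\lambda\alpha(L_{|D})$, and the condition $\alpha/\lambda<\alpha(L_{|D})$ is precisely $\alpha<\lambda\alpha(L_{|D})$, which closes the argument. The main obstacle in this scheme is making the slice-and-trace step rigorous, since $\varphi$ is a priori only in $L^1_{\mathrm{loc}}$ and its restriction to $D$ is not pointwise defined; I would address this by Demailly-type regularization of $\varphi$ from above by a decreasing sequence of smooth $\omega_0$-plurisubharmonic functions, perform the slice estimate for these approximants where $\varphi(z',0)$ makes sense, and pass to the limit by monotone convergence. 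The three constants in the minimum thus correspond respectively to $\alpha(\omega_0)$ (bulk estimate off $D$), $\lambda\beta$ (normal Hölder integrability against $|z^n|^{2\beta-2}$), and $\lambda\alpha(L_{|D})$ (tangential alpha invariant on $D$).
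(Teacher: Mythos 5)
Your overall architecture correctly identifies where each term in the minimum should come from (the bulk away from $D$ from $\alpha(\omega_0)$, the tangential direction from $\lambda\alpha(L_{|D})$ via the rescaling that makes $\lambda\varphi|_D$ an $L_{|D}$-psh potential, and the normal threshold $\lambda\beta$ from the model competitor $\frac{1}{\lambda}\log|s|_H^2$). But the central analytic step is a genuine gap: the claimed slice estimate
\begin{equation*}
\int_{|z^n|<\delta} e^{-\alpha\varphi(z',z^n)}|z^n|^{2\beta-2}\,dA(z^n)\;\leq\; C\,e^{-(\alpha/\lambda)\varphi(z',0)}
\end{equation*}
with a constant uniform over $\varphi\in\mathscr{H}(M,\omega_0)$ is not just unjustified but false. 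Subharmonic functions satisfy only the sub-mean value inequality at the center, which goes the wrong way: the value $\varphi(z',0)$ gives no upper control on $\int e^{-q\alpha\varphi}$ over the transverse disc. Concretely, a restriction of the form $u(w)=N\log\frac{|w-a|}{2}$ with $a\neq 0$ small and $N$ large has $u(z',0)$ finite while $\int_{|w|<\delta}e^{-q\alpha u}\,dA=\infty$ once $q\alpha N\geq 2$; such behavior occurs for legitimate members of the family (potentials with large Lelong numbers at points near, but off, $D$), so no uniform constant exists and H\"older cannot rescue the step. Moreover, even if some slice bound held, the H\"older split forces the conjugate exponent $q>1/\beta$ on the $e^{-\alpha\varphi}$ factor, so the tangential requirement would degrade to $\alpha<\beta\lambda\alpha(L_{|D})$ (and the purely global H\"older version gives only $\alpha<\beta\,\alpha(\omega_0)$); a Fubini-plus-H\"older scheme is structurally lossy and cannot reach the clean minimum $\min\{\lambda\beta,\alpha(\omega_0),\lambda\alpha(L_{|D})\}$. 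Demailly regularization addresses only the pointwise meaning of $\varphi|_D$, not this loss.

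Note also that the paper itself does not prove this lemma; it quotes it from Berman and Li--Sun, and in those references the bridge between integrability of $e^{-\alpha\varphi}|s|_H^{-2(1-\beta)}$ near $D$ and the alpha invariant of $L_{|D}$ is the Ohsawa--Takegoshi $L^2$ extension theorem (extension from $D$ with the weight $e^{-\alpha\varphi}|s|^{-2(1-\beta)}$, admissible precisely when $\alpha<\lambda\beta$), combined with Demailly--Koll\'ar-type semicontinuity; the divisorial data enters through $\int_D e^{-\alpha\varphi|_D}$, controlled by $\alpha<\lambda\alpha(L_{|D})$, never through pointwise values on slices. That $L^2$ extension input is the missing idea your proposal would need to become a proof.
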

%
%

\begin{mt}
\label{berman}
  For any $\beta$ satisfies that
  \begin{equation*}
    \max\{\frac{1-\lambda}{1-\tilde{C}},0\} <\beta<\min\{\frac{\alpha(\omega_0)}{\tilde{C}}, \frac{\lambda\alpha(L_D|D)}{\tilde{C}},1\},
  \end{equation*}
  the functional $\widetilde{\mu}_{\omega_0, \frac{1-\beta}{\lambda}D}$ is proper on the space $\mathscr{H}_X\lr{M,\omega_0}$, and there exists a conical K\"ahler-Ricci soliton solving equation
  \begin{equation}
  \label{CKRS:beta}
    Ric\lr{\omega_\beta}=\beta\omega_\beta+\frac{1-\beta}{\lambda}[D] + L_X\omega_\beta.
  \end{equation}
\end{mt}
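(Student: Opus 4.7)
The plan is to reduce the existence claim to Theorem \ref{theorem:existence} via a properness argument. Set $\nu:=(1-\beta)/\lambda$, so that $\gamma(\lambda,\nu)=1-\lambda\nu=\beta$ and equation \eqref{CKRS:beta} becomes \eqref{eqn:D2}. Since the hypothesis $|X(\log|s|_H^2)|<C$ is in force, Theorem \ref{theorem:existence} will produce the conical K\"ahler--Ricci soliton as soon as $\widetilde{\mu}_{\omega_0,\nu D}$ is proven to be $\widetilde{J}$-proper on $\mathscr{H}_X(M,\omega_0)$. By inequality \eqref{eqn:D1}, $\widetilde{\mu}_{\omega_0,\nu D}\geq\beta\,\widetilde{F}_{\omega_0,\nu D}+(\text{const})$, so it suffices to prove the $\widetilde{J}$-properness of the log Ding functional $\widetilde{F}_{\omega_0,\nu D}$.

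Since $\widetilde{F}_{\omega_0,\nu D}$ is invariant under $\varphi\mapsto\varphi+c$ (the $-\tfrac{1}{\beta}\log$ term shifts by $+c$ while $\hat{F}_{\omega_0}$ shifts by $-c$), I would normalize $\sup_M\varphi=0$, so that $\varphi\leq 0$. Then the trivial bound
$$\hat{F}_{\omega_0}(\varphi)=\widetilde{J}_{\omega_0}(\varphi)-\tfrac{1}{V}\textstyle\int_M\varphi\,e^{\theta_X}\omega_0^n\geq \widetilde{J}_{\omega_0}(\varphi)$$
already accounts for the $\widetilde{J}$-growth. The main task is a lower bound on the log term
$$L(\varphi):=-\tfrac{1}{\beta}\log\Bigl(\tfrac{1}{V}\textstyle\int_M\tfrac{e^{h_0-\beta\varphi}}{|s|_H^{2\nu}}\omega_0^n\Bigr).$$

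By the $\alpha$-invariant lemma recalled just before the proposition, $\alpha_*:=\alpha([\omega_0],\nu D)\geq\min\{\lambda+\beta-1,\alpha(\omega_0),\lambda\alpha(L|_D)\}$. The three hypotheses on $\beta$, namely $\beta>(1-\lambda)/(1-\tilde{C})$ (i.e.\ $\lambda+\beta-1>\tilde{C}\beta$), $\tilde{C}\beta<\alpha(\omega_0)$, and $\tilde{C}\beta<\lambda\alpha(L|_D)$, are exactly calibrated to yield $\tilde{C}\beta<\alpha_*$. Picking $\alpha\in(\tilde{C}\beta,\alpha_*)$, the classical alpha-invariant estimate $\int_M|s|_H^{-2\nu}e^{-\alpha(\varphi-\sup\varphi)}\Omega\leq C_\alpha$ holds. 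Following Berman's thermodynamical argument in \cite{berman2013thermodynamical}, adapted to the twisted solitonic setting, I would apply H\"older's inequality with exponent $\alpha/(\tilde{C}\beta)>1$ on the log integrand, so as to interpolate between the alpha-invariant bound and the available volume estimate, and then use Proposition \ref{prop:1} (in particular $\widetilde{J}_{\omega_0}\geq(1-\tilde{C})\widetilde{I}_{\omega_0}$) to convert the exponent gap $\beta-\tilde{C}\beta=(1-\tilde{C})\beta$ into an $(\widetilde{I}_{\omega_0}-\widetilde{J}_{\omega_0})$-type remainder. The goal is a Tian-type inequality
$$L(\varphi)\geq\epsilon\,(\widetilde{I}_{\omega_0}-\widetilde{J}_{\omega_0})(\varphi)-C_0,$$
valid with $\sup_M\varphi=0$, for some $\epsilon>0$ depending on $\alpha$, $\tilde{C}$, $\beta$, $\lambda$.

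Combining the three pieces on $\{\sup\varphi=0\}$ gives
$$\widetilde{F}_{\omega_0,\nu D}(\varphi)=\hat{F}_{\omega_0}(\varphi)+L(\varphi)\geq\widetilde{J}_{\omega_0}(\varphi)+\epsilon\,(\widetilde{I}_{\omega_0}-\widetilde{J}_{\omega_0})(\varphi)-C_0\geq c\,\widetilde{J}_{\omega_0}(\varphi)-C,$$
using Proposition \ref{prop:1} for the last step. This is the desired $\widetilde{J}$-properness, and plugging back into Theorem \ref{theorem:existence} yields the conical K\"ahler--Ricci soliton solving \eqref{CKRS:beta}. The main obstacle is the sharpening of the alpha-invariant estimate for $L$: a naive application works only under the stronger condition $\beta<\alpha_*$, whereas the present hypotheses only give $\tilde{C}\beta<\alpha_*$, so the nontrivial input of Proposition \ref{prop:1}, which is exactly what distinguishes our conical solitonic version from Berman's original setting, must be woven into the H\"older interpolation to close the gap.
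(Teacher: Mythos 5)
Your reduction to Theorem \ref{theorem:existence} via $\nu=(1-\beta)/\lambda$, and your observation that the three hypotheses on $\beta$ are exactly calibrated so that $\tilde{C}\beta$ lies below the lower bound $\min\{\lambda+\beta-1,\alpha(\omega_0),\lambda\alpha(L_{|D})\}$ for $\alpha([\omega_0],\nu D)$, both match the paper. But the core analytic step is a genuine gap, and as stated it is false. Your target inequality $L(\varphi)\geq\epsilon\,(\widetilde{I}_{\omega_0}-\widetilde{J}_{\omega_0})(\varphi)-C_0$ with $\epsilon>0$ would, since $\widetilde{I}_{\omega_0}-\widetilde{J}_{\omega_0}\geq 0$, force the log term to be bounded below uniformly on $\{\sup_M\varphi=0\}$, i.e. a uniform bound on $\int_M e^{-\beta\varphi}|s|_H^{-2\nu}e^{h_0}\omega_0^n$; that is precisely the ``naive'' condition $\beta<\alpha([\omega_0],\nu D)$ which you yourself note is not granted by the hypotheses (they only give $\tilde{C}\beta$ below the alpha invariant, with $\tilde{C}<1$, so $\beta$ may exceed it, in which case $L$ is unbounded below and your inequality fails). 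Moreover, the proposed fix — ``weaving Proposition \ref{prop:1} into the H\"older interpolation'' — has no mechanism to work: Proposition \ref{prop:1} only compares $I_{\omega_0},\widetilde{I}_{\omega_0},\widetilde{J}_{\omega_0}$ with one another and gives no control of $\sup_M(-\varphi)$ or of the exponential integral, and the Ding functional $\widetilde{F}_{\omega_0,\nu D}$ contains no explicit $-\beta(\widetilde{I}_{\omega_0}-\widetilde{J}_{\omega_0})$ term through which the constant $\tilde{C}$ could enter. (Note also that if your inequality did hold, your final step $\widetilde{J}+\epsilon(\widetilde{I}-\widetilde{J})-C_0\geq c\widetilde{J}-C$ would be trivial with $c=1$, so Proposition \ref{prop:1} would never actually be used — a sign that the place where $\tilde{C}$ must enter has been lost.)

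The paper closes this gap by working with the log K--energy rather than the Ding functional. Writing out $\widetilde{\mu}_{\omega_0,\frac{1-\beta}{\lambda}D}(\varphi)$ explicitly, it equals the weighted entropy term $\frac{1}{V}\int_M\log\frac{|s|_H^{\frac{2-2\beta}{\lambda}}e^{\theta_X+X(\varphi)}\omega_\varphi^n}{e^{\theta_X}\omega_0^n}\,e^{\theta_X+X(\varphi)}\omega_\varphi^n$ plus $-\beta(\widetilde{I}_{\omega_0}-\widetilde{J}_{\omega_0})(\varphi)$ plus bounded terms. For any $t<\alpha([\omega_0],\frac{1-\beta}{\lambda}D)$, Jensen's inequality with respect to the probability measure $\frac{1}{V}e^{\theta_X+X(\varphi)}\omega_\varphi^n$, combined with the alpha-invariant bound, gives that the entropy term is at least $t\,\widetilde{I}_{\omega_0}(\varphi)-C_1$; then $\widetilde{I}_{\omega_0}-\widetilde{J}_{\omega_0}\leq\tilde{C}\,\widetilde{I}_{\omega_0}$ from Proposition \ref{prop:1} yields $\widetilde{\mu}_{\omega_0,\frac{1-\beta}{\lambda}D}(\varphi)\geq(t-\beta\tilde{C})\widetilde{I}_{\omega_0}(\varphi)-C_1$, which is proper exactly when $t$ can be chosen in $(\beta\tilde{C},\alpha([\omega_0],\frac{1-\beta}{\lambda}D))$ — i.e. under the stated range of $\beta$. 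So the constant $\tilde{C}$ enters through the $-\beta(\widetilde{I}-\widetilde{J})$ term of the K--energy, not through any sharpening of the alpha-invariant estimate; proving properness of $\widetilde{F}_{\omega_0,\nu D}$ is a strictly stronger statement than needed (by \eqref{eqn:D1} it implies properness of $\widetilde{\mu}_{\omega_0,\nu D}$, not conversely) and is not established by, nor needed for, the paper's argument. You should redirect your proof to the K--energy and the Jensen-type estimate above.
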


\begin{proof}
  For any $$0<t<\alpha([\omega_0],\frac{1-\beta}{\lambda}D),$$we have
  \begin{equation*}
  \begin{split}
    \log C_t \geq &\log\lr{\frac{1}{V}\int_M e^{-t(\varphi-\sup \varphi)}\frac{e^{\theta_X}\omega_0^n}{|s|_H^{\frac{2-2\beta}{\lambda}}}}\\
    \geq& \frac{1}{V}\int_M [-t(\varphi-\sup \varphi)-\log \frac{|s|_H^{\frac{2-2\beta}{\lambda}}e^{\theta_X+X(\varphi)} \omega_\varphi^n}{e^{\theta_X}\omega_0^n}]e^{\theta_X+X(\varphi)}\omega_\varphi^n\\
    \geq & t\widetilde{I}_{\omega_0}(\varphi)- \frac{1}{V}\int_M \log \frac{|s|_H^{\frac{2-2\beta}{\lambda}}e^{\theta_X+X(\varphi)} \omega_\varphi^n}{e^{\theta_X}\omega_0^n} e^{\theta_X+X(\varphi)}\omega_\varphi^n.
  \end{split}
  \end{equation*}
  By the definition of $\widetilde{\mu}_{\omega_0,D}$, we have
  \begin{equation*}
  \begin{split}
    \repl{=}&\widetilde{\mu}_{\omega_0,\frac{1-\beta}{\lambda}D}(\varphi)\\
    =& -\beta(\widetilde{I}_{\omega_0}(\varphi) - \widetilde{J}_{\omega_0}(\varphi))+\frac{1}{V} \int_M (h_0-\theta_X) (e^{\theta_X}\omega_0^n- e^{\theta_X+X(\varphi)}\omega_\varphi^n) \\
    &-\frac{1-\beta}{\lambda V} \int_M \log|s|_H^2e^{\theta_X}\omega_0^n + \frac{1}{V} \int_M \log\frac{|s|_H^{\frac{2-2\beta}{\lambda}}e^{\theta_X+X(\varphi)}\omega_\varphi^n}{e^{\theta_X}\omega_0^n} e^{\theta_X+X(\varphi)} \omega_\varphi^n\\
    \geq & -\beta(\widetilde{I}_{\omega_0}(\varphi) - \widetilde{J}_{\omega_0}(\varphi)) +t\widetilde{I}_{\omega_0}(\varphi) -C_1\\
    \geq & (t-\beta \tilde{C}) \widetilde{I}_{\omega_0}(\varphi)-C_1.
  \end{split}
  \end{equation*}
  We get that if
  \begin{equation*}
    \max\{\frac{1-\lambda}{1-\tilde{C}},0\}<\beta<\min\{\frac{\alpha(\omega_0)}{\tilde{C}}, \frac{\lambda\alpha(L_D|D)}{\tilde{C}},1\},
  \end{equation*}
  then $\widetilde{\mu}_{\omega_0,\frac{1-\beta}{\lambda}D}$ is proper.

  And the second statement is an easy consequence.
\end{proof}


Next we prove the first part of Theorem \ref{theorem:0.4}.

\begin{proof}
  Assume that $\beta_0$ is a fixed positive number given above such that $\widetilde{\mu}_{\omega_0 ,\frac{1-\beta_0}{\lambda}D}$ is proper. From the definition of $\widetilde{\mu}_{\omega_0, \frac{1-\beta_0}{\lambda}D}$, we get that for some positive constant $C_1,C_2$,
  \begin{equation*}
  \begin{split}
    &\frac{1-\beta_0}{\lambda V}\int_M \log |s|_H^2 (e^{\theta_X+X(\varphi)}\omega_\varphi^n- e^{\theta_X}\omega_0^n)\\
    \geq & (C_1+\beta_0)\lr{\widetilde{I}_{\omega_0}(\varphi) -C_2 - \widetilde{J}_{\omega_0}(\varphi)} -\frac{1}{V}\int_M \log \frac{e^{\theta_X+X(\varphi)}\omega_\varphi^n}{e^{\theta_X}\omega_0^n}e^{\theta_X+X(\varphi)}\omega_\varphi^n.
  \end{split}
  \end{equation*}
  For any $\beta>\beta_0$, let $\xi$ be a positive number letter than $1$, such that
  \begin{equation*}
    0<\frac{\beta-\beta_0-C_1(1-\beta)}{\xi(1-\beta_0)}< \frac{\beta-\beta_0}{1-\beta_0}.
  \end{equation*}
  Further more, we denote $\kappa\in(\frac{\beta-\beta_0-C_1(1-\beta)}{\xi(1-\beta_0)} , \frac{\beta-\beta_0}{1-\beta_0})$. Then we have that

  \begin{equation*}
    \begin{split}
      &\widetilde{\mu}_{\omega_0,\frac{1-\beta_0}{\lambda}D}(\varphi)- \kappa \widetilde{\mu}_{\omega_0,(1-\xi)\omega_0}\lr{\varphi}\\
      \geq&\frac{1}{V}\lr{1-\kappa -\frac{1-\beta}{1-\beta_0}}\int_M\log \frac{e^{\theta_X+X(\varphi)}\omega_\varphi^n}{e^{\theta_X}\omega_0^n}e^{\theta_X+X(\varphi)}\omega_\varphi^n \\
      &+[ \frac{(C_1+\beta_0)(1-\beta)}{1-\beta_0}- \beta +\kappa \xi]\lr{\widetilde{I}_{\omega_0}(\varphi)-\widetilde{J}_{\omega_0}(\varphi)} -C_3 \\
      \geq& C_4\lr{\widetilde{I}_{\omega_0}(\varphi)-\widetilde{J}_{\omega_0}(\varphi)}-C_3.
    \end{split}
  \end{equation*}
  Further more, according to Corollary \ref{theorem:2}, $\widetilde{\mu}_{\omega_0,(1-\xi)\omega_0}$ is proper, so $\widetilde{\mu}_{\omega_0,(1-\beta)D}$ is proper.
\end{proof}


\medskip

While $R(X)=1$, following \cite{chen2012kahler} and \cite{tian2013k}, we consider the limit behavior of $\omega_{\phi_\varepsilon}$ under Gromov--Hausdorff distance for $\beta$ in Theorem \ref{theorem:0.4}, where $\omega_{\phi_\varepsilon}$ is solution of
\begin{equation*}
  Ric(\omega_{\phi_\varepsilon})=\beta \omega_{\phi_\varepsilon}+ \frac{1-\beta}{\lambda}(\lambda\omega_0+\im \partial\overline{\partial}\log (|s|_H^2+\varepsilon^2)) +L_X\omega_{\phi_\epsilon}.
\end{equation*}

Before proving this, we recall the result of \cite{wang2013structure}, i.e. the extended Cheeger--Colding theory in Bakry--Emery geometry.

\begin{thm}[\cite{wang2013structure}]
\label{Theorem:wz}
Let $(M_i,g_i;p_i)$ be a sequence of $n$--dimensional Riemannian manifolds which satisfy
\begin{equation*}
\begin{split}
  &Ric(g_i)+\operatorname{hess}(f_i)\geq -(n-1)^2\Lambda^2 g\\
  &\operatorname{vol}_{g_i}(B_p(1))\geq v>0,\text{ and }|\nabla f_i|_{g_i}\leq A.
\end{split}
\end{equation*}
Then $(M_i,g_i;p_i)$ converge to a metric space $(Y;p_\infty)$ in the pointed Gromov--Hausdorff topology.
\end{thm}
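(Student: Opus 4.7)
The statement is Wang--Zhu's structure theorem from \cite{wang2013structure}, quoted here as a black-box tool, so the ``proof'' that I would propose is really a sketch of the Wang--Zhu strategy, which extends the Cheeger--Colding theory of limits of manifolds with Ricci curvature bounded below to the Bakry--Emery setting. The key observation is that the Bakry--Emery Ricci tensor $\operatorname{Ric}_{f_i}:=\operatorname{Ric}(g_i)+\operatorname{hess}(f_i)$ plays the role of the Ricci tensor, so one should try to reproduce, in weighted form, the two analytic ingredients that make Gromov's precompactness theorem applicable: a relative volume comparison and a uniform non-collapsing bound.

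My plan would proceed in the following steps. First, starting from $\operatorname{Ric}_{f_i}\geq -(n-1)^2\Lambda^2 g_i$, derive via the weighted Bochner formula a weighted Laplacian comparison for the distance function to a fixed basepoint $p_i$; this is the analogue of the classical Laplacian comparison and, after integration along radial geodesics, yields a Wei--Wylie-type weighted Bishop--Gromov inequality
\[
\frac{V_{f_i}(B_{p_i}(R))}{V_{f_i}(B_{p_i}(r))}\leq \Phi_{n,\Lambda,A}(R,r),\qquad 0<r<R,
\]
for the weighted volume $V_{f_i}(B_{p_i}(R))=\int_{B_{p_i}(R)} e^{-f_i}\,dv_{g_i}$, where $\Phi$ depends only on $n$, $\Lambda$ and the bound $A$ on $|\nabla f_i|$. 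Second, the hypothesis $|\nabla f_i|\leq A$ together with a normalization $f_i(p_i)=0$ gives $|f_i|\leq AR$ on $B_{p_i}(R)$, so the weighted and unweighted volumes are comparable by factors $e^{\pm AR}$ on each ball of fixed radius. Combined with the non-collapsing assumption $\operatorname{vol}_{g_i}(B_{p_i}(1))\geq v>0$, this gives a two-sided control
\[
c(R,v,A)\leq V_{f_i}(B_{p_i}(R))\leq C(R,v,A,\Lambda),
\]
and in particular a uniform doubling constant for the weighted measures on compact subsets.

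Third, the relative volume comparison implies a uniform upper bound on the number of disjoint $\varepsilon$-balls inside $B_{p_i}(R)$, which is precisely the totally-boundedness input to Gromov's precompactness theorem. Passing to a diagonal subsequence, one extracts a pointed Gromov--Hausdorff limit $(Y,p_\infty)$ together with a limit Radon measure $\mu_\infty$ on $Y$ obtained as the weak limit of $e^{-f_i}dv_{g_i}$. Finally, one needs to verify that the limit is a genuine metric space, which follows from the uniform doubling and the standard Gromov--Hausdorff compactness argument.

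The main obstacle, and the original content of Wang--Zhu beyond naive adaptation of Cheeger--Colding, is in making the weighted Bishop--Gromov work cleanly under a one-sided gradient bound on $f_i$ rather than a pointwise bound on $f_i$ itself; the gradient bound is precisely what allows one to pass the estimates over metric balls of a fixed size, and it is the hypothesis one verifies in the Kähler--Ricci soliton application by taking $f_i=-\theta_X(\omega_i)$ and using the uniform bound of Lemma~\ref{lemma:3}.
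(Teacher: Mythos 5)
The paper offers no proof of this statement at all: it is imported verbatim as a black box from \cite{wang2013structure}, so there is no internal argument to measure your sketch against. As a sketch of the cited result your outline is essentially correct and identifies the right ingredients: a weighted Laplacian comparison and the resulting relative volume comparison for the measure $e^{-f_i}\,dv_{g_i}$ under the hypotheses $\operatorname{Ric}(g_i)+\operatorname{hess}(f_i)\geq-(n-1)^2\Lambda^2 g_i$ and $|\nabla f_i|\leq A$, followed by a packing bound, Gromov precompactness, and a diagonal subsequence (note the conclusion is properly a subsequential limit). Two attributional/structural corrections are worth making. First, the weighted Bishop--Gromov comparison under a gradient bound on $f$ is due to Wei--Wylie \cite{wei2007comparison}, which this paper itself invokes in Lemma \ref{lemma:diameter}; the genuinely new content of Wang--Zhu is not the precompactness step you emphasize but the extension of the Cheeger--Colding \emph{structure} theory (almost rigidity, splitting, regularity and codimension bounds for the singular set of the limit) to the Bakry--Emery setting, which is what the paper actually needs later, e.g.\ when it appeals to Theorem 3.7 of \cite{cheeger2000structure} in the proof of Theorem \ref{theorem:main1}. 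Second, the non-collapsing hypothesis $\operatorname{vol}_{g_i}(B_{p_i}(1))\geq v>0$ is not needed for pointed Gromov--Hausdorff precompactness (the volume comparison alone gives the packing estimate); it enters only in the structure theory of the limit. Finally, a small sign quibble: in the paper's application the weight is taken as $f_\varepsilon=\theta_X(\omega_{\phi_\varepsilon})$ (with the bound on $|X|_{\omega_{\phi_\varepsilon}}$ supplying $|\nabla f_\varepsilon|\leq A$), not $-\theta_X$; this is purely a convention for whether the reference measure is $e^{-f}dv$ or $e^{f}dv$ and does not affect the argument.
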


In our case, $f_\varepsilon=\theta_{X}(\omega_{\phi_\varepsilon})$, so we first give a uniform estimate for
\begin{equation*}
|\nabla f_\varepsilon|_{\omega_{\phi_\varepsilon}}=|X|_{\omega_{\phi_\varepsilon}}.
\end{equation*}

\begin{yl}
  $|X|_{\omega_{\phi_\varepsilon}}$ is uniformly bounded with respect to $\varepsilon$.
\end{yl}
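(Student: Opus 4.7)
Plan. The uniform bound on $|X|_{\omega_{\phi_\varepsilon}}$ will follow by combining the uniform Laplace estimate from the proof of Theorem~\ref{theorem:existence} with a pointwise analysis of $|X|$ in the reference metric $\omega_\varepsilon$, crucially exploiting the tangency of $X$ to $D$ forced by the hypothesis $|X(\log|s|_H^2)|<C$.

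First, I would invoke the Laplace estimate \eqref{eqn:laplace5}, which gives the matrix comparison $g^{\phi_\varepsilon}_{i\bar j}\le A\,g^\varepsilon_{i\bar j}$ with $A$ independent of $\varepsilon$. Contracting against $X^i\overline{X^j}$ yields
$$|X|^2_{\omega_{\phi_\varepsilon}}\le A\,|X|^2_{\omega_\varepsilon},$$
so it is enough to bound $|X|^2_{\omega_\varepsilon}$ uniformly in $\varepsilon$.

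Second, I would verify that $X$ is tangent to $D$. In a local holomorphic chart $(z^1,\ldots,z^n)$ with $D=\{z^n=0\}$, write $|s|^2_H=|z^n|^2 a(z)$ for a smooth positive function $a$ coming from the local trivialization of $H$. Then
$$X(\log|s|^2_H)=\frac{X^n}{z^n}+X(\log a),$$
and since $X(\log a)$ is bounded (as $X$ applied to a smooth function) while $X(\log|s|^2_H)$ is bounded by hypothesis, we get $|X^n|\le C'|z^n|$ near $D$, i.e., $X^n$ vanishes to first order along $D$.

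Third, I would analyze $|X|^2_{\omega_\varepsilon}$. Away from a fixed tubular neighborhood of $D$, $\omega_\varepsilon$ is uniformly equivalent to a fixed smooth K\"ahler metric by the construction in \cite{guenancia2013conic}, so $|X|^2_{\omega_\varepsilon}\le C$ follows from the smoothness of $X$. Inside the tubular neighborhood, using $\omega_\varepsilon=\omega_0+k\sqrt{-1}\partial\bar\partial\chi(|s|^2_H+\varepsilon^2)$ together with the explicit asymptotics of $\chi'$ and $\chi''$, the components of $g^\varepsilon$ satisfy $g^\varepsilon_{n\bar n}\lesssim(|z^n|^2+\varepsilon^2)^{-\nu}$ in the normal direction, while the tangential components $g^\varepsilon_{\alpha\bar\beta}$ ($\alpha,\beta<n$) are uniformly bounded and the cross components $g^\varepsilon_{n\bar\beta}$ are $\lesssim|z^n|^{1-2\nu}$. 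Contracting each piece of $|X|^2_{\omega_\varepsilon}=g^\varepsilon_{i\bar j}X^i\overline{X^j}$ against the smooth $X^\alpha$ ($\alpha<n$) and using $|X^n|\le C'|z^n|$, every summand is dominated by a constant multiple of $(|z^n|^2)^{1-\nu}\le C$, which is uniformly bounded because $\nu=(1-\beta)/\lambda<1$ in the range of $\beta$ considered in Theorem~\ref{theorem:0.4}. A finite cover by such charts then yields a uniform bound on $|X|^2_{\omega_\varepsilon}$, and combined with the first step this gives the claim.

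The main obstacle is the pointwise bookkeeping in the third step: one has to extract the precise asymptotic normal form of $\omega_\varepsilon$ near $D$ (including the cross components $g^\varepsilon_{n\bar\beta}$) from the formula for $\chi$, and verify that every cross contraction is tamed by the first-order vanishing of $X^n$ established in the second step. Once this local computation is made rigorous, the result is an immediate consequence of $\nu<1$.
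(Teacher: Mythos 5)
Your proof is correct, but it follows a genuinely different route from the paper's. The paper argues via a Bochner/maximum-principle scheme in the spirit of Wang--Zhu: writing $\theta_X'=\theta_X(\omega_{\phi_\varepsilon})$, the analogue of \eqref{eqn:9} gives $\laplace\theta_X'=-\beta\theta_X'-(1-\beta)\theta_X-\frac{1-\beta}{\lambda}X(\log(|s|_H^2+\varepsilon^2))-|X|^2_{\omega_{\phi_\varepsilon}}+C_1$, the Bochner formula yields $(\laplace+X)|X|^2_{\omega_{\phi_\varepsilon}}=|\partial\bb{\partial}\theta_X'|^2-\beta|X|^2_{\omega_{\phi_\varepsilon}}-(1-\beta)|X|^2_{\omega_0}-\frac{1-\beta}{\lambda}\im\partial\bb{\partial}\log(|s|_H^2+\varepsilon^2)(X,\bb{X})$, the last term is bounded above using exactly the tangency information $|X(|s|_H^2)|\leq C|s|_H^2$ that you also extract, and then $|\partial\bb{\partial}\theta_X'|^2\geq\frac{(\laplace\theta_X')^2}{n}\geq\frac{(|X|^2_{\omega_{\phi_\varepsilon}}-C_2)^2}{n}$ (using Lemma \ref{lemma:3}) produces a differential inequality to which the maximum principle applies. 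Your argument instead reduces everything to the reference metric via the uniform second-order estimate \eqref{eqn:laplace5} (legitimately available here: the metrics $\omega_{\phi_\varepsilon}$ of Section \ref{section:6} are the $t=1$ solutions of \eqref{eqn:D3} with $\nu=\frac{1-\beta}{\lambda}<1$, and the properness needed for the uniform $C^0$ bound is supplied by Proposition \ref{berman}), and then bounds $|X|^2_{\omega_\varepsilon}$ by a local computation near $D$ using $|X^n|\lesssim|z^n|$ together with the asymptotics of $\chi$, $\chi'$, $\chi''$ from \cite{guenancia2013conic}; note in passing that the tangential blocks are simply bounded by constants rather than by $|z^n|^{2-2\nu}$, and the cross-term estimate requires the uniform bounds $\chi'\lesssim(\varepsilon^2+|s|_H^2)^{-\nu}$ and $|\chi''|\lesssim(\varepsilon^2+|s|_H^2)^{-\nu}|s|_H^{-2}$, which do hold. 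The trade-off: the paper's maximum-principle proof is softer, needing neither the Laplace estimate nor any asymptotic normal form of $\omega_\varepsilon$ (only Lemma \ref{lemma:3} and the hypothesis on $X(\log|s|_H^2)$), and so is independent of the properness machinery; your proof costs the full strength of \eqref{eqn:laplace5} and some bookkeeping, but in exchange it gives the stronger pointwise statement that $|X|_{\omega_\varepsilon}$ itself (hence $|X|$ in the limiting conical metric) is uniformly bounded, with the geometric mechanism --- tangency of $X$ to $D$ and cone angle less than $2\pi$ --- made explicit.
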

\begin{proof}
  We will denote $\theta_X'=\theta_X(\omega_{\phi_\varepsilon})$, and $\laplace$ is the $\overline{\partial}$--Laplace operator associated to $\omega_{\phi_\varepsilon}$. As the computation while getting \eqref{eqn:9}, we have that
  \begin{equation*}
    \laplace\theta_X'= -\beta \theta_X' -(1-\beta) \theta_X - \frac{1-\beta}{\lambda} X(\log(|s|_H^2+\varepsilon^2)) - |X|_{\omega_{\phi_\varepsilon}}^2 + C_1,
  \end{equation*}
  where $C_1$ is a uniform constant. Following \cite{wang2013structure}, the Bochner implies that
  \begin{equation*}
    \begin{split}
      &(\laplace+X)(|X|_{\omega_{\phi_\varepsilon}}^2)\\
      =&|\partial\overline{\partial}\theta_X'|_{\omega_{\phi_\varepsilon}}^2 -\beta |X|_{\omega_{\phi_\varepsilon}}^2 -(1-\beta) |X|_{\omega_{0}}^2 -\frac{1-\beta}{\lambda} \im \partial\overline{\partial}\log (|s|_H^2+\varepsilon^2)(X,\overline{X}).
    \end{split}
  \end{equation*}
  Since $|X\lr{\phi_\varepsilon}|$ is uniformly bounded, we get that
  \begin{equation*}
    |\partial\overline{\partial}\theta_X'|_{\omega_{\phi_\varepsilon}}^2 \geq \frac{(\laplace\theta_X')^2}{n} \geq \frac{(|X|_{\omega_{\phi_\varepsilon}}^2-C_2)^2}{n}.
  \end{equation*}
  Since $X(|s|_H^2)|\leq C_3|s|_H^2$, we have that
  \begin{equation*}
  \begin{split}
    \repl{=}&\im \partial\overline{\partial}\log (|s|_H^2+\varepsilon^2)(X,\bar{X})\\ =&\lr{\frac{\varepsilon^2\langle ds, ds\rangle}{(|s|_H^2+\varepsilon^2)^2}- \frac{\lambda|s|_H^2\omega_0}{|s|_H^2+\varepsilon^2}}(X,\overline{X})\\
    \leq & \frac{\varepsilon^2\langle ds, s\rangle\wedge \langle s, ds\rangle}{|s|^2_H(|s|_H^2+\varepsilon^2)^2}(X,\overline{X})\\
    = & \frac{\varepsilon^2(X(|s|_H^2))^2}{4|s|^2_H(|s|_H^2+\varepsilon^2)^2}\\
    \leq & \frac{C_3^2\varepsilon^2|s|_H^4}{4|s|^2_H(|s|_H^2+\varepsilon^2)^2}\\
    \leq &C_4.
  \end{split}
  \end{equation*}
  Combining the inequalities and equalities above,
  \begin{equation*}
    (\laplace+X)(|X|_{\omega_{\phi_\varepsilon}}^2)\geq \frac{(|X|_{\omega_{\phi_\varepsilon}}^2 -C_2)^2}{n} -\beta |X|_{\omega_{\phi_\varepsilon}}^2- C_5.
  \end{equation*}
  Applying the maximum principle, we get that $|X|_{\omega_{\phi_\varepsilon}}^2$ is uniformly bounded.
\end{proof}

\begin{yl}
\label{lemma:diameter}
  The diameter of $(M,\omega_{\phi_\varepsilon})$ is uniformly bounded. Furthermore, the same thing is true for the volume of geodesic ball $B_p(1)$ with respect to $\omega_{\phi_\varepsilon}$.
\end{yl}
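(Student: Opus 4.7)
The plan is to exploit the Bakry--Emery formulation of the regularized twisted K\"ahler--Ricci soliton equation, combine it with the uniform bound on $|X|_{\omega_{\phi_\varepsilon}}$ obtained in the previous lemma, and then invoke weighted versions of Myers' theorem and Bishop--Gromov comparison. Concretely, I would rewrite the equation for $\omega_{\phi_\varepsilon}$ as
\begin{equation*}
\Ric\lr{\omega_{\phi_\varepsilon}}-L_X\omega_{\phi_\varepsilon}\;=\;\beta\,\omega_{\phi_\varepsilon}+\tfrac{1-\beta}{\lambda}\eta_\varepsilon\;\geq\;\beta\,\omega_{\phi_\varepsilon},
\end{equation*}
where the positivity of $\eta_\varepsilon=\lambda\omega_0+\im\pa\pb\log(|s|_H^2+\varepsilon^2)$ is the standard observation used in \cite{guenancia2013conic}. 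Setting $f_\varepsilon=\theta_X(\omega_{\phi_\varepsilon})$ and using that $\IM(X)$ is Killing for $g_{\phi_\varepsilon}$, this inequality translates at the level of real symmetric tensors into the Bakry--Emery lower bound $\Ric_{g_{\phi_\varepsilon}}-\nabla^2 f_\varepsilon\geq\beta\,g_{\phi_\varepsilon}$, while the previous lemma supplies $|\nabla f_\varepsilon|_{g_{\phi_\varepsilon}}=|X|_{\omega_{\phi_\varepsilon}}\leq A$ uniformly in $\varepsilon$.

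With these two ingredients the uniform diameter bound follows from the weighted Myers theorem (Qian; Wei--Wylie): there exists $D_0=D_0(n,\beta,A)$ such that $\operatorname{diam}(M,g_{\phi_\varepsilon})\leq D_0$. For the volume lower bound I would use that $\int_M\omega_{\phi_\varepsilon}^n=\int_M\omega_0^n=V$ is fixed cohomologically; covering $M$ by $N$ geodesic balls $B_{p_i}(1)$ with respect to $g_{\phi_\varepsilon}$, where $N$ is bounded in terms of $D_0$ via weighted Bishop--Gromov comparison, a pigeonhole argument produces a basepoint $p=p_\varepsilon$ with $\operatorname{Vol}_{g_{\phi_\varepsilon}}(B_p(1))\geq V/N$, which is all that Theorem \ref{Theorem:wz} requires for the subsequent compactness argument.

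The main obstacle is the faithful conversion of the complex inequality $\Ric(\omega)-L_X\omega\geq\beta\omega$ to the genuine real Bakry--Emery bound stated above; this relies on the Killing property of $\IM(X)$ and on the K\"ahler identities relating $L_X\omega$ to $\im\pa\pb\theta_X(\omega)$ and to the real Hessian of $\theta_X(\omega)$. A secondary technical point is that the Bishop--Gromov comparison produces an $f_\varepsilon$-weighted volume bound, so one has to absorb the weight by recalling that the $C^0$-norm of $f_\varepsilon$ is uniformly controlled (a consequence of $|X|_{\omega_{\phi_\varepsilon}}\leq A$, the diameter bound, and the normalization $\int_M e^{\theta_X(\omega_{\phi_\varepsilon})}\omega_{\phi_\varepsilon}^n=V$). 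Once these two points are dispatched, the uniform diameter and volume estimates are direct applications of the weighted comparison geometry of \cite{wang2013structure}.
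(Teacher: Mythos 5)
Your argument is correct, and it lives in the same circle of ideas as the paper's proof (a Myers-type theorem plus Bakry--Emery volume comparison), but the two routes differ in the key inputs. For the diameter, the paper does not pass through the weighted Myers theorem with a gradient bound: it simply quotes Mabuchi's result on multiplier Hermitian structures \cite{mabuchi2003multiplier}, which together with $Ric\lr{\omega_{\phi_\varepsilon}}-L_X\omega_{\phi_\varepsilon}\geq \beta\omega_{\phi_\varepsilon}$ gives $\operatorname{diam}\leq C_1/\sqrt{\beta}$ directly, with no need for the bound on $|X|_{\omega_{\phi_\varepsilon}}$; your version instead converts the $(1,1)$-inequality into the real Bakry--Emery bound $Ric-\nabla^2 f_\varepsilon\geq\beta g$ (legitimate, since $X$ holomorphic with real potential $\theta_X(\omega_{\phi_\varepsilon})$ kills the $(2,0)+(0,2)$ parts of the Hessian) and then uses Qian/Wei--Wylie, which requires the uniform bound on $|X|_{\omega_{\phi_\varepsilon}}$ from the preceding lemma -- or, even more simply, you could use Wei--Wylie's Myers theorem for bounded $f$, since $f_\varepsilon=\theta_X+X(\phi_\varepsilon)$ is uniformly bounded in $C^0$ by Lemma \ref{lemma:3}. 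For the volume, the paper's one-line appeal to \cite{wei2007comparison} is stated as an upper bound on $\operatorname{Vol}(B_p(1))$ (which is anyway trivial, the total volume being cohomologically fixed), whereas what Theorem \ref{Theorem:wz} actually needs is the non-collapsing lower bound; your covering/pigeonhole argument (or, more directly, the weighted relative volume comparison applied to $B_p(1)\subset B_p(D_0)=M$ at every point, after absorbing the weight by the $C^0$ bound on $f_\varepsilon$) supplies exactly that, so your proof is in this respect more complete and better adapted to the subsequent compactness argument than the proof as written in the paper.
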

\begin{proof}
  By the result of \cite{mabuchi2003multiplier}, and the condition that
  \begin{equation*}
    Ric(\omega_{\phi_\varepsilon})-L_X\omega_{\phi_\varepsilon}\geq \beta\omega_{\phi_\varepsilon},
  \end{equation*}
  we get that the diameter of $(M,\omega_{\phi_\varepsilon})$ is bounded by $\frac{C_1}{\sqrt{\beta}}$, which is independent of $\varepsilon$.

  It is an easy consequence of the volume comparison theorem for Bakry--Emety Ricci curvature of \cite{wei2007comparison} that the volume of geodesic ball $B_p(1)$ with respect to $\omega_{\phi_\varepsilon}$ is uniformly bounded from above.
\end{proof}

\begin{thm}
\label{theorem:main1}
  The smooth K\"ahler metric $\omega_{\phi_\varepsilon}$ converge to $\omega_\beta$ in the Gromov--Hausdorff topology on $M$ and in the $C^\infty$ topology outside $D$.
\end{thm}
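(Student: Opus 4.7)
The plan is to combine uniform interior regularity for $\phi_\varepsilon$ away from $D$, which gives $C^\infty_{\mathrm{loc}}$ convergence off the divisor, with the extended Cheeger--Colding theory of \cite{wang2013structure} recorded in Theorem \ref{Theorem:wz}, which upgrades a uniform Bakry--Emery curvature lower bound into Gromov--Hausdorff precompactness. The final step will be to identify the GH limit with $\omega_\beta$.

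First, I would extract smooth convergence on $M\setminus D$. Every uniform estimate produced in the proof of Theorem \ref{theorem:existence}---the global $C^0$ and H\"older bounds, the two-sided comparison $A^{-1}\omega_\varepsilon \leq \omega_{\phi_\varepsilon} \leq A\omega_\varepsilon$ of (\ref{eqn:laplace5}), and the local gradient bound $|\nabla_{\omega_0}\omega_{\phi_\varepsilon}|_{\omega_{\phi_\varepsilon}} \leq C(r)$ on geodesic balls $B_p(r) \subset\subset M\setminus D$---is uniform in $\varepsilon$. Standard Schauder bootstrapping applied to (\ref{eqn:D3scalar}) at $t=1$, whose right-hand side is smooth on compact subsets of $M\setminus D$, yields uniform $C^{k,\alpha}_{\mathrm{loc}}(M\setminus D)$ bounds on $\phi_\varepsilon$ for every $k$. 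A diagonal argument then gives, along a subsequence, $\phi_\varepsilon \to \phi_\beta$ in $C^\infty_{\mathrm{loc}}(M\setminus D)$ for some $\phi_\beta$ with $\omega_0 + \im\partial\overline{\partial}\phi_\beta = \omega_\beta$; the uniqueness of $\omega_\beta$ upgrades this to convergence of the full sequence off $D$.

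Next, I would apply Theorem \ref{Theorem:wz} with weight $f_\varepsilon := \theta_X(\omega_{\phi_\varepsilon})$. Equation (\ref{CKRS:beta}) satisfied by $\omega_{\phi_\varepsilon}$ in its $\varepsilon$-regularized form, together with $\eta_\varepsilon \geq 0$, yields the Bakry--Emery lower bound $Ric(\omega_{\phi_\varepsilon}) + \operatorname{Hess}_{\omega_{\phi_\varepsilon}} f_\varepsilon \geq \beta\, \omega_{\phi_\varepsilon}$. The gradient bound $|\nabla f_\varepsilon|_{\omega_{\phi_\varepsilon}} = |X|_{\omega_{\phi_\varepsilon}} \leq C$ is the lemma immediately preceding Lemma \ref{lemma:diameter}, and the uniform diameter bound together with the non-collapsing of unit balls are Lemma \ref{lemma:diameter}. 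Theorem \ref{Theorem:wz} then produces, after passing to a further subsequence, Gromov--Hausdorff convergence of $(M,\omega_{\phi_\varepsilon})$ to a compact metric space $(Y,d_Y)$.

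Finally, I would identify $(Y,d_Y)$ with $(M,d_{\omega_\beta})$. The comparison (\ref{eqn:laplace5}) forces the distance functions $d_{\omega_{\phi_\varepsilon}}$ to be uniformly bi-Lipschitz to $d_{\omega_\varepsilon}$; since $\omega_\varepsilon$ itself converges to the smooth conical model $\omega^*$ of \cite{guenancia2013conic} (globally as a current, locally smoothly on $M\setminus D$), the GH-limit $Y$ must be bi-Lipschitz homeomorphic to $(M,d_{\omega^*})$ and therefore homeomorphic to $M$ with $D$ a closed subset of measure zero. On $M\setminus D$ the smooth convergence from the first step identifies $d_Y$ with $d_{\omega_\beta}$ locally, and continuity as a length-space metric then pins down the identification across $D$. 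The main obstacle is precisely this last identification: ruling out extra limit points appearing over the divisor and matching the conical length-metric in the GH-limit requires simultaneously the Laplacian comparison (\ref{eqn:laplace5}) and the explicit local geometry of the approximating metrics $\omega_\varepsilon$ near $D$ furnished by \cite{guenancia2013conic}.
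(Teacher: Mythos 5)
Your first two steps (uniform interior estimates giving $C^\infty_{\mathrm{loc}}(M\setminus D)$ convergence, and Theorem \ref{Theorem:wz} with $f_\varepsilon=\theta_X(\omega_{\phi_\varepsilon})$ plus the gradient, diameter and non-collapsing bounds of the two preceding lemmas, yielding a subsequential Gromov--Hausdorff limit $(Y,d_Y)$) coincide with the paper's argument. The gap is in your final identification step. Bi-Lipschitz control is not enough: from \eqref{eqn:laplace5} you only get $A^{-1/2}d_{\omega_\varepsilon}\leq d_{\omega_{\phi_\varepsilon}}\leq A^{1/2}d_{\omega_\varepsilon}$, so even granting $d_{\omega_\varepsilon}\to d_{\omega^*}$ you conclude at best that $Y$ is bi-Lipschitz homeomorphic to $(M,d_{\omega^*})$, i.e. distances over the divisor are controlled only up to the fixed factor $\sqrt{A}$; this can never ``pin down'' an isometry. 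The genuine difficulty, which your phrase ``continuity as a length-space metric'' glosses over, is to rule out shortcuts: a priori a $d_Y$-minimizing path between points of the regular part $U$ could pass through the singular set $Y\setminus U$ lying over $D$ and be strictly shorter than any path in $U$, so the restriction of $d_Y$ to $U$ need not equal the length metric $d_U$ induced by $g_\infty\cong\omega_\beta$; the same issue appears on the other side, since it is not clear that $(M\setminus D,\omega_\beta)$ is geodesically convex (the paper explicitly remarks on this and defines the limit metric as the metric completion of $(M\setminus D,\omega_\beta)$). Local smooth convergence only identifies $d_Y$ with $d_{\omega_\beta}$ for pairs of points much closer to each other than to the divisor, which is not the global statement.

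The paper closes this gap with structure theory of Ricci limit spaces rather than with the comparison to $\omega_\varepsilon$: the local isometry $F_\infty:(M\setminus D,\omega_\beta)\to (U,g_\infty)$ satisfies $d_Y\leq d_U$, hence is Lipschitz and extends to a Lipschitz surjection from the metric completion onto $Y$, with $Y\setminus U\subseteq F_\infty(D)$; since $D$ has zero codimension-one Hausdorff measure with respect to $\omega_\beta$, so does $Y\setminus U$, and Theorem 3.7 of \cite{cheeger2000structure} (available here through the Bakry--Emery extension \cite{wang2013structure}) gives $d_U=d_Y$ on $U$ and that $Y$ is the metric completion of $(U,d_U)$; therefore $F_\infty$ is an isometry. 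To repair your proof you need this (or an equivalent) input showing that a singular set of zero codimension-one measure in the limit space does not alter distances; neither the Laplacian comparison \eqref{eqn:laplace5} nor the explicit geometry of $\omega_\varepsilon$ near $D$ substitutes for it. A minor additional point: full-sequence convergence off $D$ does not need a uniqueness theorem for conical K\"ahler--Ricci solitons (which the paper does not prove); $\omega_\beta$ is by construction the limit of the $\omega_{\phi_\varepsilon}$ from the proof of Theorem \ref{theorem:existence}.
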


\begin{proof}
  First, we recall the Laplace estimate of $\omega_\beta$, i.e. there exists $C_1$, $C_2$ such that
  \begin{equation*}
    C_1\omega_0\leq \omega_\beta\leq \frac{C_2}{|s|_H^{\frac{2-2\beta}{\lambda}}}\omega_0.
  \end{equation*}
  It is easy to see that $\omega_\beta$ defines a metric on $M$, and makes it into a compact length space. This can also be considered as the metric completion of the incomplete Riemannian manifold $(X\backslash D,\omega_\beta)$. On the other hand, it is not immediately clear that $(X\backslash D,\omega_\beta)$ is geodesically convex.

  By Theorem \ref{Theorem:wz}, we denote $(Y,d_Y)$ to be a sequential Gromov--Hausdorff limit of $(M,\omega_{\phi_\varepsilon})$. By Lemma \ref{lemma:diameter}, we know that $(Y,d_Y)$ is a compact length space. Since $\omega_{\phi_\varepsilon}$ converges smoothly to $\omega_\beta$ locally on $M\backslash D$, we obtain a smooth dense open subset $U$ of $Y$ endowed with a Riemannian metric $g_\infty$, and a surjective local isometry
  \begin{equation*}
  F_\infty:(X\backslash D,\omega_\beta)\to (U,g_\infty)\text{ (as Riemannian manifolds)}.
  \end{equation*}
  For any $x,y\in U$, clearly we have $d_Y(x,y)\leq d_U(x,y)$, where $d_U$ is the metric on $U$ induced by the Riemannian metric $g_\infty$. From this it is easy to see that $F_\infty$ is a Lipschitz map, so $F_\infty$ extends to a Lipschitz map from the metric completion $(X,\omega_\beta)$ to $Y$, and the image is closed. Since $U$ is dense, it follows that $F_\infty$ is surjective. It is clear that $Y\backslash U$ is contained in $F_\infty(D)$. Since $D$ has zero codimension one Hausdorff measure with respect to $\omega_\beta$, we see that $Y\backslash U$ also has zero codimension one Hausdorff measure. Then by Theorem 3.7 of \cite{cheeger2000structure}, we know that $d_U(x,y)=d_Y(x,y)$ for any $x,y\in U$, and $Y$ is the metric completion of $(U,d_U)$. It follows that $F_\infty$ is an isometry.
\end{proof}


\bibliographystyle{amsplain}

\end{document}